\address{Max-Planck-Institut f\"ur Mathematik in den Naturwissenschaften. Inselstraße 22, 04103 Leipzig, Germany.}
  \email{vasirog[at]gmail.com}
\DeclareSymbolFont{cyrletters}{OT2}{wncyr}{m}{n}
\DeclareMathSymbol{\Sha}{\mathalpha}{cyrletters}{"58}
\begin{document}
\renewcommand{\refname}{Bibliography}
\newtheorem{prop}{Proposition}[section]
\newtheorem{thrm}[prop]{Theorem}
\newtheorem{lemma}[prop]{Lemma}
\newtheorem{cor}[prop]{Corollary}
\newtheorem{mainthm}{Theorem}
\newtheorem{maincor}[mainthm]{Corollary}
\theoremstyle{definition}
\newtheorem{df}{Definition}
\newtheorem{ex}{Example}
\newtheorem{rmk}{Remark}
\newtheorem{conj}{Conjecture}
\newtheorem{cl}{Claim}
\newtheorem{q}{Question}
\newtheorem{constr}{Construction}
\renewcommand{\proofname}{\textnormal{\textbf{Proof:  }}}
\renewcommand{\refname}{Bibliography}
\renewcommand{\themainthm}{\Alph{mainthm}}
\renewcommand{\themaincor}{\Alph{maincor}}

\renewcommand{\phi}{\varphi}
\renewcommand{\epsilon}{\varepsilon}

\renewcommand{\C}{\mathbb C}
\newcommand{\Z}{\mathbb Z}
\newcommand{\Q}{\mathbb Q}
\newcommand{\R}{\mathbb R}
\newcommand{\N}{\mathbb N}
\newcommand{\Fp}{\mathbb{F}_p}
\newcommand{\Fq}{\mathbb{F}_q}

\renewcommand{\O}{\mathcal O}
\newcommand{\g}{\mathfrak g}
\newcommand{\h}{\mathfrak h}
\newcommand{\E}{\mathcal E}
\newcommand{\F}{\mathcal F}
\newcommand{\m}{\mathfrak{m}}

\renewcommand{\i}{\sqrt{-1}}
\renewcommand{\o}{\otimes}
\newcommand{\di}{\partial}
\newcommand{\acts}{\lefttorightarrow}
\newcommand{\dibar}{\overline{\partial}}
\newcommand{\im}{\operatorname{im}}
\renewcommand{\ker}{\operatorname{ker}}
\newcommand{\Hom}{\operatorname{Hom}}
\newcommand{\tr}{\operatorname{tr}}
\newcommand{\codim}{\operatorname{codim}}
\newcommand{\rk}{\operatorname{rk}}
\newcommand{\nilp}{\operatorname{nilp}}
\newcommand{\hdot}{{\:\raisebox{3pt}{\text{\circle*{1.5}}}}}
\newcommand{\Supp}{\operatorname{Supp}}
\newcommand{\Alb}{\operatorname{Alb}}
\newcommand{\alb}{\operatorname{alb}}
\newcommand{\Hilb}{\operatorname{Hilb}}
\newcommand{\Sh}{\operatorname{Sh}}
\newcommand{\sh}{\operatorname{sh}}
\newcommand{\CP}{\mathbb{C}\mathbf{P}}
\newcommand{\Isom}{\operatorname{Isom}}
\newcommand{\Sym}{\operatorname{Sym}}
\newcommand{\Stab}{\operatorname{Stab}}
\newcommand{\Aut}{\operatorname{Aut}}
\newcommand{\sslash}{\mathbin{/\mkern-6mu/}}
\newcommand{\Pic}{\operatorname{Pic}}
\newcommand{\V}{\mathbb{V}}

\newcommand{\GL}{\operatorname{GL}}
\newcommand{\SL}{\operatorname{SL}}
\newcommand{\SU}{\operatorname{SU}}
\renewcommand{\U}{\operatorname{U}}
\newcommand{\SO}{\operatorname{SO}}
\newcommand{\Ogr}{\operatorname{O}}
\newcommand{\Sp}{\operatorname{Sp}}

\newcommand{\gl}{\mathfrak{gl}}
\renewcommand{\sl}{\mathfrak{sl}}
\newcommand{\su}{\mathfrak{su}}
\renewcommand{\u}{\mathfrak{u}}
\newcommand{\so}{\mathfrak{so}}
\renewcommand{\sp}{\mathfrak{sp}}
\renewcommand{\g}{\mathfrak{g}}
\renewcommand{\h}{\mathfrak{h}}
\newcommand{\z}{\mathfrak{z}}
\newcommand{\ad}{\operatorname{ad}}
\newcommand{\cd}{\operatorname{cd}}
\newcommand{\an}{\operatorname{an}}
\newcommand{\orb}{\operatorname{orb}}
\newcommand{\Trop}{\operatorname{Trop}}
\newcommand{\T}{\mathbb{T}}

\newcommand{\tX}{\widetilde{X}}
\newcommand{\tM}{\wideilde{M}}
\newcommand{\tF}{\widetilde{F}}
\newcommand{\G}{\mathcal G}
\renewcommand{\L}{\mathcal L}
\renewcommand{\sp}{\mathrm{sp}}

\binoppenalty = 10000
\relpenalty = 10000

\title{The Bieri-Neumann-Strebel sets of quasi-projective groups}

\author{Vasily Rogov}

\begin{abstract}
Let $X$ be a smooth complex quasi-projective variety and $\Gamma=\pi_1(X)$. Let $\chi \colon \Gamma \to \R$ be an additive character. We prove that the ray $[\chi]$ does not belong to the BNS set $\Sigma(\Gamma)$ if and only if it comes as a pullback along an algebraic fibration $f \colon X \to \mathcal{C}$ over a quasi-projective hyperbolic orbicurve $\mathcal{C}$. We also prove that if $\pi_1(X)$ admits a solvable quotient which is not virtually nilpotent, there exists a finite \'etale cover $X_1 \to X$ and a fibration $f \colon X_1 \to \mathcal{C}$ over a quasi-projective hyperbolic orbicurve $\mathcal{C}$. Both of these results were proved by Delzant in the case where $X$ is a compact K\"ahler manifold. We deduce that  $\Gamma$ is virtually solvable if and only if it is virtually nilpotent, generalising  theorems of Delzant and Arapura-Nori.

As a byproduct, we prove a version of Simpson's Lefschetz Theorem for the integral leaves of logarithmic $1$-forms on quasi-projective varieties.

We give two applications of our results. First, we strengthen a recent theorem of Cadorel-Deng-Yamanoi on virtual nilpotency of fundamental groups of quasi-projective $h$-special and weakly special manifolds. Second, we prove the sharpness of Suciu's tropical bound for the fundamental groups of smooth quasi-projective varieties and answer a question of Suciu on the topology of hyperplane arrangements.
\end{abstract}

\maketitle

\tableofcontents

\section{Introduction}\label{intro chap bns}

\subsection{Delzant's Theorems}

It is known that the fundamental groups of smooth complex algebraic varieties and, more generally, quasi-Kähler manifolds, share many specific features (\cite{ABCKT}, \cite{Py}). For instance, there is a general principle that for such groups, solvability often implies nilpotency. The first result in this direction is probably due to Campana \cite{Camp01}; see also \cite{Brud}. Later, Arapura and Nori showed that if the fundamental group of a normal complex algebraic variety is solvable and admits a faithful representation in $\GL_r(\mathbb{Q})$, then it is virtually nilpotent (\cite[Corollary 3.4]{AN}). In the same paper, they proved (Theorem 4.9, ibid.) that if $X$ is a smooth quasi-K\"ahler manifold and the commutator subgroup $[\pi_1(X), \pi_1(X)] \subseteq \pi_1(X)$ is finitely generated, then every solvable quotient of $\pi_1(X)$ is virtually nilpotent.

Finally, Delzant obtained the strongest result in this direction in \cite{Delz}.

\begin{thrm}[\cite{Delz}, Th\'eor\`eme 1.4 ]\label{delzant nilpotent}
Let $X$ be a compact K\"ahler manifold, $Q$ a solvable group that is not virtually nilpotent and $\pi_1(X) \twoheadrightarrow Q$ an epimorphism. There exists a finite \'etale cover $X_1 \to X$ and a surjective fibration $f \colon X_1 \to \mathcal{C}$ over a hyperbolic orbicurve $\mathcal{C}$.
\end{thrm}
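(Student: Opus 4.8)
The plan is to deduce Theorem \ref{delzant nilpotent} from three inputs: Delzant's description of the complement of the Bieri--Neumann--Strebel invariant $\Sigma(\pi_1 X)$ in terms of fibrations over hyperbolic orbicurves (the compact case of the first main theorem of this paper), the theory of cohomology jump loci for compact K\"ahler manifolds (Green--Lazarsfeld, Simpson, Arapura), and the structure theory of finitely generated solvable groups (Milnor--Wolf, Bieri--Strebel). Write $\rho\colon\pi_1(X)\twoheadrightarrow Q$. The first step is purely group-theoretic: since $Q$ is solvable and not virtually nilpotent, after passing to a finite-index subgroup of $Q$ and a quotient of that subgroup one obtains a group $Q_0$ falling into one of two cases: either (i) $Q_0$ admits an epimorphism $\psi\colon Q_0\twoheadrightarrow\Z$ whose kernel is not finitely generated (this handles the non-polycyclic quotients, such as lamplighter-type and solvable Baumslag--Solitar groups), or (ii) $Q_0\cong\Z^n\rtimes_M\Z$ with $M\in\GL_n(\Z)$ having an eigenvalue $\lambda$ with $|\lambda|\neq 1$ (the non-nilpotent polycyclic case, such as a $\mathrm{Sol}$-type group). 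Let $X_0\to X$ be the finite \'etale cover corresponding to the chosen finite-index subgroup of $Q$, and $\rho_0\colon\pi_1(X_0)\twoheadrightarrow Q_0$ the composite epimorphism.

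In case (i) I would put $\chi=\psi\circ\rho_0\colon\pi_1(X_0)\twoheadrightarrow\Z$. As $\rho_0$ is onto, its restriction to $\ker\chi$ surjects onto $\ker\psi$; since the latter is not finitely generated, neither is $\ker\chi$, so by the Bieri--Neumann--Strebel criterion at least one of the rays $[\chi],[-\chi]$, say $[\chi]$, lies outside $\Sigma(\pi_1 X_0)$. By Delzant's characterization of $\Sigma$, the character $\chi$ is then pulled back along a surjective holomorphic fibration with connected fibers $f\colon X_0\to\mathcal{C}$ onto a hyperbolic orbicurve $\mathcal{C}$, so $X_1:=X_0$ does the job.

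In case (ii) the Bieri--Neumann--Strebel invariant carries no information (for $\Z^n\rtimes_M\Z$ with $M$ hyperbolic one has $\Sigma=S(\pi_1)$, and this can persist on the cover), so I would argue through the Alexander-type module instead. The composition $\chi\colon\pi_1(X_0)\xrightarrow{\rho_0}\Z^n\rtimes_M\Z\to\Z$ makes $H_1(X_0;\C[t^{\pm1}]_\chi)$ surject, as a module over $\C[t^{\pm1}]$, onto $\C^n$ with $t$ acting by $M$; hence the support of this module contains the eigenvalue $\lambda$, and so the rank-one local system $L_\lambda$ with monodromy $\lambda$ along $\chi$ lies in the first cohomology jump locus of $X_0$ and is not unitary. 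By Simpson's theorem the jump locus is a finite union of torsion-translated subtori of the character torus, so the non-torsion point $L_\lambda$ lies on a positive-dimensional component; by the Beauville--Arapura structure of such components --- equivalently, by Simpson's Lefschetz theorem for the integral leaves of the holomorphic $1$-form underlying $L_\lambda$ --- this component is pulled back from a surjective fibration $f\colon X_0\to\mathcal{C}$ onto an orbicurve. Finally $\mathcal{C}$ is hyperbolic: otherwise $\Sigma^1(\mathcal{C})$ consists only of torsion characters, which cannot receive the non-torsion $L_\lambda$ under pullback. Taking $X_1:=X_0$ finishes this case.

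The step I expect to be the main obstacle is case (ii): a non-nilpotent polycyclic solvable quotient is invisible to the Bieri--Neumann--Strebel invariant, so one genuinely has to upgrade a monodromy eigenvalue off the unit circle to a curve fibration through the jump-locus / Simpson--Lefschetz machinery. In the compact K\"ahler setting this machinery is classical, but in the quasi-projective generalization pursued here the relevant local systems correspond to logarithmic $1$-forms that do not extend to any partial compactification, and establishing the corresponding Lefschetz theorem for their integral leaves --- the byproduct announced in the abstract --- is where the real difficulty lies. The group-theoretic reduction of the first step and the hyperbolicity verifications are, by contrast, routine.
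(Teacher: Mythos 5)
Your overall architecture (dichotomy between a ``non-finitely-generated kernel'' case handled by the BNS invariant plus Delzant's Theorem \ref{delzant bns}, and a polycyclic case handled through non-torsion points of the cohomology jump locus and the Beauville--Arapura structure theorem) is the same as the paper's, and your case (i) and the jump-locus part of case (ii) are in themselves sound. The genuine gap is your ``purely group-theoretic'' first step, which is false as stated. Take $K$ a totally real cubic field, $u_1,u_2$ multiplicatively independent units of infinite order, and $Q=\mathcal{O}_K\rtimes_{(u_1,u_2)}\Z^2$, acting by multiplication. This is finitely generated, metabelian, polycyclic, and not virtually nilpotent (any infinite-order unit has an archimedean embedding off the unit circle, giving exponential growth). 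Case (i) is impossible: $Q$ is polycyclic, so every quotient of every finite-index subgroup is polycyclic and all kernels of characters to $\Z$ are finitely generated. Case (ii) is also impossible: for any finite-index $Q''\le Q$ with $L'':=Q''\cap\mathcal{O}_K$, a nontrivial normal subgroup $N\trianglelefteq Q''$ either meets $L''$ nontrivially --- and then, since the only $\Q[u_1^{\pm}]$-invariant subspaces of $K$ are $0$ and $K$, the quotient $Q''/N$ is (finite)-by-$\Z^2$, hence virtually abelian --- or satisfies $[N,L'']\subseteq N\cap L''=0$, forcing $N\subseteq L''$ (the centralizer of $L''$ is $Q''\cap\mathcal{O}_K$, because $u_1^{a_1}u_2^{a_2}\neq1$ for $(a_1,a_2)\neq0$) and hence $N=0$; moreover $Q''$ itself is not isomorphic to any $\Z^n\rtimes_M\Z$ (an abelian normal subgroup with cyclic quotient would contain a finite-index subgroup of $L''$ together with an element projecting nontrivially to $\Z^2$, forcing $u^a=1$). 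So no finite-index subgroup of $Q$ has a quotient of your type (ii) with a hyperbolic eigenvalue, and your argument never gets started for such $Q$.

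The repair is exactly the route the paper takes (following Delzant): first invoke the non-trivial reduction theorem (Theorem \ref{metabelian lemma}, Delzant's Th\'eor\`eme 3.2) producing a virtually metabelian, non-virtually-nilpotent quotient $M_0$, pass to a finite-index metabelian $M_1$ and the corresponding cover $X_1$, and then split according to whether $DM_1$ is finitely generated. If not, a generic real character vanishing on $DM_1$ has non-finitely-generated kernel, so by Theorem \ref{kernel} and Proposition \ref{bns pullback} its pullback lies outside $\Sigma(\pi_1(X_1))$ and Theorem \ref{delzant bns} gives the fibration (note the paper works with real characters, avoiding your implicit appeal to rationality of the complement of $\Sigma$ needed to find an integral character with non-finitely-generated kernel). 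If $DM_1$ is finitely generated, one does not look for a $\Z^n\rtimes_M\Z$ quotient at all: one decomposes $DM_1\otimes\C$ under the conjugation action of $M_1^{\operatorname{ab}}$ into rank-one characters $\theta_i$ of the full (possibly higher-rank) abelianization; Lemma \ref{characters} gives $H^1(M_1,\C_{\theta_i})\neq0$ and says that some $\theta_i$ is non-torsion, and this non-torsion exceptional character pulled back to $\pi_1(X_1)$ feeds directly into Theorem \ref{GL set}. In the example above the relevant characters $\theta_j(n_1,n_2)=\sigma_j(u_1)^{n_1}\sigma_j(u_2)^{n_2}$ have rank-two image, which is precisely why they are invisible to any single character to $\Z$ but perfectly visible to the character-variety formalism. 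Your Alexander-module computation and the hyperbolicity argument (with $\Sigma^1(\mathcal{C})$ presumably meaning $\mathcal{V}^1(\pi_1^{\orb}(\mathcal{C}))$) are fine, but they only cover the special quotients whose existence you cannot guarantee.
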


The power of Delzant's Theorem \ref{delzant nilpotent} is that it assumes nothing on the representation theory of $\pi_1(X)$, for instance, the solvable group $Q$ might be non-linear. Such generality is significantly rare in the theory of K\"ahler groups.

As a corollary, Delzant deduced the following theorem.

\begin{thrm}[\cite{Delz}, Corollaire 3.4]\label{delzant solv implies nilp}
Let $X$ be a compact K\"ahler manifold. Suppose that $\pi_1(X)$ is virtually solvable. Then it is virtually nilpotent.
\end{thrm}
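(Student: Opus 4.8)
The plan is to reduce to the structure Theorem~\ref{delzant nilpotent} and conclude by an elementary group-theoretic argument. Assume, for contradiction, that $\pi_1(X)$ is virtually solvable but not virtually nilpotent. By definition, $\pi_1(X)$ contains a finite-index subgroup $\Gamma'$ that is solvable; and virtual nilpotency is a commensurability invariant: it clearly passes to finite-index subgroups, and it also passes up from a finite-index subgroup, since the normal core in the ambient group of a nilpotent finite-index subgroup is again nilpotent of finite index. Hence $\Gamma'$ is not virtually nilpotent either. Let $X' \to X$ be the finite \'etale cover with $\pi_1(X') \cong \Gamma'$; it is again a compact K\"ahler manifold. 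Thus we may assume from the start that $\pi_1(X)$ is solvable and not virtually nilpotent, and we apply Theorem~\ref{delzant nilpotent} with $Q = \pi_1(X)$ and the identity epimorphism.

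Theorem~\ref{delzant nilpotent} then produces a finite \'etale cover $X_1 \to X$ together with a surjective fibration $f \colon X_1 \to \mathcal{C}$ onto a hyperbolic orbicurve $\mathcal{C}$. Since $f$ has connected fibres, it induces a surjection $\pi_1(X_1) \twoheadrightarrow \pi_1^{\orb}(\mathcal{C})$, the multiplicities of the multiple fibres being absorbed into the orbifold structure on the base. As $\mathcal{C}$ is a hyperbolic orbicurve and $X$, hence $\mathcal{C}$, is compact, $\pi_1^{\orb}(\mathcal{C})$ is a non-elementary cocompact Fuchsian group; in particular it contains a non-abelian free subgroup and is not virtually solvable. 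On the other hand, $\pi_1(X_1)$ is a finite-index subgroup of the solvable group $\pi_1(X)$, hence solvable, and every quotient of a solvable group is solvable. This contradiction completes the proof.

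The deduction is short, and I do not expect any genuine obstacle: all the content has been packed into Theorem~\ref{delzant nilpotent}. The only two points deserving a remark are (i) that ``virtually solvable'' and ``virtually nilpotent'' are unaffected by passing to finite \'etale covers, so that the reduction in the first paragraph is harmless, and (ii) the standard fact that a fibration with connected fibres onto a hyperbolic orbicurve yields a surjection of fundamental groups onto a group containing a non-abelian free group --- this is the only place where the hyperbolicity hypothesis on $\mathcal{C}$ is used.
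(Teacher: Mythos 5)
Your proof is correct and follows essentially the same route as the paper: this statement is deduced from Theorem~\ref{delzant nilpotent} exactly as the paper deduces its quasi-projective analogue (Corollary~\ref{solv implies nilp}) from Theorem~\ref{solvable quasi-proj}, namely by applying the fibration theorem to the identity epimorphism after passing to a suitable finite cover and then contradicting solvability of $\pi_1(X_1)$ via the surjection onto $\pi_1^{\orb}(\mathcal{C})$. The only cosmetic difference is that you invoke a non-abelian free subgroup of the cocompact Fuchsian group, while the paper notes that $\pi_1^{\orb}(\mathcal{C})$ is a lattice in $\operatorname{PSL}_2(\R)$ and hence not solvable.
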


The proofs of Theorems \ref{delzant nilpotent} and \ref{delzant solv implies nilp} in \cite{Delz} are based on Delzant's description of the BNS set of the fundamental group of a compact K\"ahler manifold (see Theorem \ref{delzant bns} below).  

The BNS (or Bieri-Neumann-Strebel) set $\Sigma(\Gamma)$  is a subtle invariant of a finitely generated group $\Gamma$. It is defined as a certain canonical subset of its first cohomology group $H^1(\Gamma, \R)$ (see Definition \ref{bns def}). This set is stable under the multiplication by positive real numbers and conventionally does not contain the zero class, so it is usually regarded as a subset of the spherization $\mathbb{S}H^1(\Gamma, \R)=\left(H^1(\Gamma, \R) \setminus \{0\}\right) /\R^{\times}_{>0}.$ We refer the reader to Sections 11.1 and 11.2 of the monograph \cite{Py} for a brief and enlightening introduction to BNS invariants with an emphasis on geometric applications.

\begin{thrm}[Delzant, \cite{Delz}]\label{delzant bns}
Let $X$ be a compact K\"ahler manifold and $\Gamma:=\pi_1(X)$. Let $\chi$ be a nonzero class in $H^1(\pi_1(X), \R) = H^1(X, \R)$. The following conditions are equivalent:
\begin{itemize}
\item the ray $[\chi]$ does not belong to the BNS set $\Sigma(\Gamma)$;
\item there exists a holomorphic fibration $f \colon X \to \mathcal{C}$ over a hyperbolic orbicurve $\mathcal{C}$ and a holomorphic closed $1$-form $\omega$ on its coarse moduli space $C$ such that $\chi=[f^*\operatorname{Re}\omega]$.
\end{itemize}
\end{thrm}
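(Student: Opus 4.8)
The plan is to prove the two implications of Theorem~\ref{delzant bns} separately: the direction ``fibration $\Rightarrow$ $[\chi]\notin\Sigma(\Gamma)$'' is elementary and purely group-theoretic, whereas ``$[\chi]\notin\Sigma(\Gamma)$ $\Rightarrow$ fibration'' is the substantial half and rests on harmonic map theory.

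\emph{The implication ``fibration $\Rightarrow$ $[\chi]\notin\Sigma$''.} Given $f\colon X\to\mathcal C$ onto a hyperbolic orbicurve with $\chi=[f^*\operatorname{Re}\omega]$, the homotopy sequence of the fibration gives an exact sequence $\pi_1(F)\to\Gamma\to\pi_1^{\orb}(\mathcal C)\to 1$ for a general fibre $F$, where the orbifold structure on $\mathcal C$ absorbs the multiple fibres of $f$ (Siu, Catanese). As $F$ is compact K\"ahler, $\pi_1(F)$ is finitely generated, so its image $N\trianglelefteq\Gamma$ is a finitely generated normal subgroup on which $\chi$ vanishes, and $\chi$ descends to $\bar\chi\in H^1(\pi_1^{\orb}(\mathcal C),\R)=H^1(C,\R)$. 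By the standard behaviour of the BNS invariant under extensions with finitely generated kernel (Sections~11.1--11.2 of \cite{Py}), $[\chi]\in\Sigma(\Gamma)\iff[\bar\chi]\in\Sigma(\pi_1^{\orb}(\mathcal C))$. But $\mathcal C$ hyperbolic means $\pi_1^{\orb}(\mathcal C)$ is virtually a closed surface group of genus $\geq 2$, whose BNS invariant is empty by the original computation of Bieri--Neumann--Strebel; since $\Sigma$ is detected on finite-index subgroups, $\Sigma(\pi_1^{\orb}(\mathcal C))=\varnothing$, hence $[\chi]\notin\Sigma(\Gamma)$.

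\emph{The implication ``$[\chi]\notin\Sigma$ $\Rightarrow$ fibration''.} Suppose $[\chi]\notin\Sigma(\Gamma)$. I would argue in three steps. \emph{(i) From BNS to trees.} Reinterpret the condition group-theoretically: $[\chi]\notin\Sigma(\Gamma)$ is equivalent to $\Gamma$ admitting a non-trivial action on an $\R$-tree $T$ directed by $\chi$ (a ``$\chi$-cut''), via a Bieri--Strebel graph-of-groups splitting when $\chi$ is rational and an equivariant Gromov--Hausdorff limit of such (or of rescaled $\chi$-level sets in a Cayley graph) when $\chi$ is irrational. \emph{(ii) From trees to fibrations.} This cut of $\Gamma$ is the input to the structure theory of cuts in K\"ahler groups (Delzant--Gromov), which is proved by producing a $\Gamma$-equivariant pluriharmonic map $u\colon\widetilde X\to T$ (Gromov--Schoen, Korevaar--Schoen; after replacing the action by a non-abelian one if necessary) and analysing the holomorphic foliation it induces: the conclusion is that $u$ factors as $\widetilde X\to\widetilde{\mathcal C}\to T$, where $\widetilde X\to\widetilde{\mathcal C}$ is the pullback of a holomorphic fibration $f\colon X\to\mathcal C$ onto a Riemann orbisurface $\mathcal C$, the orbifold points of $\mathcal C$ encoding the multiple fibres of $f$. \emph{(iii) Conclusion.} The $\Gamma$-action on $T$ descends to a non-trivial action of $\pi_1^{\orb}(\mathcal C)$; were $\mathcal C$ not hyperbolic, $\pi_1^{\orb}(\mathcal C)$ would be finite or virtually abelian, forcing $\chi=0$ or $[\bar\chi]\in\Sigma(\pi_1^{\orb}(\mathcal C))$ and hence $[\chi]\in\Sigma(\Gamma)$ by the extension lemma --- both impossible. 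Thus $\mathcal C$ is hyperbolic, $\chi$ is pulled back along $f$, and representing the corresponding class in $H^1(C,\R)$ by the real part of the (unique) holomorphic $1$-form $\omega$ on the compact Riemann surface $C$ afforded by Hodge theory gives $\chi=[f^*\operatorname{Re}\omega]$.

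\emph{The main obstacle.} The decisive and hardest point is step \emph{(ii)}: the construction of the equivariant pluriharmonic map into the a priori wildly singular $\R$-tree $T$, and above all the factorisation of a pluriharmonic map to a tree through a holomorphic fibration onto an orbicurve. This is precisely where the K\"ahler hypothesis enters, through Bochner/Siu-type identities and the structure theory of the holomorphic foliation cut out by the associated closed holomorphic $1$-form. A secondary difficulty is step \emph{(i)} in the irrational case --- where only an action on an $\R$-tree, not a Bass--Serre splitting, is available --- together with the careful bookkeeping needed to upgrade the map produced in \emph{(ii)} to a genuine fibration onto an orbicurve with the correct multiple fibres.
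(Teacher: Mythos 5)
Your ``fibration $\Rightarrow [\chi]\notin\Sigma(\Gamma)$'' direction is correct and coincides with what the paper does (in its quasi-projective analogue, Theorem \ref{bns quasi-proj}): the orbifold exact sequence, Proposition \ref{bns pullback}, and the emptiness of the BNS set of a cocompact Fuchsian group. Note, by the way, that for this implication you only need part (i) of Proposition \ref{bns pullback}, so the finite generation of $\pi_1(F)$ is not actually needed.

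The converse is where the problem lies, and it is a genuine gap rather than a mere omission of routine detail. The statement under discussion is Delzant's theorem, which the paper quotes and whose proof (as the paper recalls, and as the paper's own quasi-projective generalisation imitates) combines the Bieri--Neumann--Strebel geometric criterion (Theorem \ref{topological bns}) with Simpson's Lefschetz theorem for integral leaves of holomorphic $1$-forms (Theorem \ref{simpson lefschetz classical}), via the Albanese map associated with $\chi$. You propose instead to go through $\R$-trees and equivariant pluriharmonic maps. But the tree actions furnished by $[\chi]\notin\Sigma(\Gamma)$ are exactly the bad ones for that machinery: Brown's characterisation \cite{Br} produces a nontrivial action on an $\R$-tree \emph{with a fixed end}, whose Busemann character is (a multiple of) $\chi$; in the rational case the Bieri--Strebel splitting may be a strictly ascending HNN extension, whose Bass--Serre action again fixes an end (a priori you only know $[\chi]\notin\Sigma(\Gamma)$, not $[-\chi]\notin\Sigma(\Gamma)$ --- the symmetry of the complement is part of the conclusion, not a hypothesis). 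For such ``directed'' actions the Gromov--Schoen/Korevaar--Schoen existence theory for equivariant (pluri)harmonic maps does not apply --- minimising sequences can drift toward the fixed end --- and the Delzant--Gromov theory of cuts concerns non-elementary, stable actions, not these. Your parenthetical ``after replacing the action by a non-abelian one if necessary'' is precisely the missing content: there is no such replacement available a priori, and showing that the only source of directedness is a fibration over a hyperbolic orbicurve is the whole theorem. The treatment of irrational $\chi$ by ``equivariant Gromov--Hausdorff limits of rescaled level sets'' is likewise unsubstantiated (extracting a useful limit action needs stability/small arc stabilisers, and the limit could be trivial). This is not an incidental difficulty: the paper explicitly remarks, after the proof of Theorem \ref{bns quasi-proj}, that finding a proof of Theorem \ref{delzant bns} starting from Brown's or Levitt's tree-theoretic description of $\Sigma$ is an open and interesting problem --- which is essentially the route you are proposing to take as a black box.
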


Theorem \ref{delzant bns} can be rephrased as follows: the complement of $\Sigma(\pi_1(X))$ inside $\mathbb{S}H^1(X, \R)$ is precisely 
$$\bigcup_{i} \mathbb{S}[f_i^*H^1(C_i, \R)] \subseteq \mathbb{S}H^1(X, \R),$$ 
where the union is taken over all possible fibrations $X \xrightarrow{f_i} C_i$  with hyperbolic orbifold  curve base (see \cite[Theorem 11.20]{Py} and the discussion there). Note that there exist only finitely many such fibrations (\cite[Section 2.3]{Py}).

Theorems \ref{delzant nilpotent}, \ref{delzant solv implies nilp} and \ref{delzant bns} were generalised by Campana  to the case where $X$ is a compact K\"ahler orbifold \cite{Camp10}.

\subsection{Main results}

In this paper, we extend Theorems \ref{delzant nilpotent}, \ref{delzant solv implies nilp} and \ref{delzant bns} to the case of smooth quasi-projective varieties. Precisely, we prove the following theorems:

\begin{mainthm}[Theorem \ref{bns quasi-proj}]\label{bns quasi-proj main}
Let $X$ be a smooth complex quasi-projective variety and  $\Gamma:=\pi_1(X)$. Let $\chi \in H^1(\Gamma, \R)$ be a nonzero class. The following conditions are equivalent:
\begin{itemize}
\item[(i)]  the ray $[\chi]$ does not belong to $\Sigma(\Gamma)$;
\item[(ii)] there exists a surjective algebraic morphism with connected general fibre $f \colon X \to \mathcal{C}$ to a quasi-projective hyperbolic orbifold curve $\mathcal{C}$ such that
$$\chi \in \im[ f^* \colon H^1(\pi_1^{\operatorname{orb}}(\mathcal{C}), \R)\to H^1(\Gamma, \R)].$$
Moreother, there exists a holomorphic logarithmic $1$-form $\omega$ on the coarse moduli space $C$ of $\mathcal{C}$ such that $\chi=[f^*\operatorname{Re} \omega]$
\end{itemize}
\end{mainthm}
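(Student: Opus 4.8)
The plan is to adapt Delzant's strategy from the compact Kähler case, but with two essential new inputs that handle the non-compactness of $X$: the logarithmic de Rham theory of Morgan--Deligne on a good compactification, and the ``logarithmic Lefschetz theorem'' for integral leaves of $1$-forms advertised in the abstract. Fix a smooth projective compactification $\bar X \supseteq X$ with simple normal crossings boundary $D = \bar X \setminus X$. A class $\chi \in H^1(\Gamma, \R) = H^1(X, \R)$ is, by Deligne's theory, represented by a \emph{closed logarithmic $1$-form} $\alpha$ on $(\bar X, D)$, unique up to exact forms; write $\alpha = \alpha_1 + i\alpha_2$ with $\alpha_1$ of type $(1,0)$ (a logarithmic holomorphic form, since closed log $(1,0)$-forms are holomorphic and $d$-closed) — so $\chi = [\operatorname{Re}\alpha_1]$ up to rescaling, and the problem becomes: understand when the holomorphic logarithmic $1$-form $\alpha_1$, or rather its cohomology class, arises from a pencil.

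The direction (ii) $\Rightarrow$ (i) should be the easy half: if $f \colon X \to \mathcal{C}$ is a fibration over a hyperbolic quasi-projective orbicurve and $\chi = f^*\psi$ for $\psi \in H^1(\pi_1^{\orb}(\mathcal C),\R)$, then $\Sigma$-theory for surface/orbicurve groups (the complement of $\Sigma(\pi_1^{\orb}(\mathcal C))$ is all of $\mathbb S H^1$ because hyperbolic orbicurve groups have no nonabelian free quotients obstruction — more precisely the BNS set of such a group is empty when the Euler characteristic is negative and $H_1$ is infinite), plus functoriality of $\Sigma$ under surjections with finitely generated kernel / the standard fibration criterion (pulling back a character that vanishes on no finite-index subgroup through a map with the right properties lands outside $\Sigma$), gives $[\chi] \notin \Sigma(\Gamma)$. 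I would isolate this as a lemma about $\Sigma$-invariants of orbicurve groups and their pullbacks, citing the Bieri--Strebel fibration theorem and the description of $\Sigma$ for one-relator / surface-type groups.

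The direction (i) $\Rightarrow$ (ii) is the heart. Starting from $[\chi]\notin\Sigma(\Gamma)$, the Bieri--Neumann--Strebel machinery gives an action of $\Gamma$ on a tree, or equivalently — via the work of Delzant and the Kähler-group philosophy — an unbounded harmonic/pluriharmonic equivariant map, which produces a holomorphic $1$-form. Concretely: the closed logarithmic holomorphic $1$-form $\alpha_1$ obtained above has integral leaves; one wants to show the (singular) holomorphic foliation defined by $\alpha_1$ has compact leaves, hence is a pencil, i.e.\ $\alpha_1$ is pulled back from a curve. This is exactly where Simpson's Lefschetz theorem enters, in its logarithmic incarnation: the subtlety, and the reason a naive application fails, is precisely the $1$-forms ``that do not extend to any partial compactification'' — so one must run the argument on $X$ itself and control the leaves near $D$. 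I expect the main obstacle to be here: proving that $[\chi]\notin\Sigma$ forces the leaves of $\alpha_1$ to be algebraic \emph{and} that the resulting map to a curve has quasi-projective, hyperbolic orbifold target (the orbifold structure coming from multiple fibres, the hyperbolicity from the assumption combined with the fact that a fibration over $\mathbb A^1$ or an elliptic curve or $\mathbb G_m$ with no orbifold points would put $[\chi]$ back inside $\Sigma$). Once the fibration $f \colon X \to \mathcal C$ is in hand, one checks $\chi$ lies in $\im f^*$ by construction ($\alpha_1 = f^*\omega$ for a log $1$-form $\omega$ on $C$), and that $\im[H^1(\pi_1^{\orb}(\mathcal C),\R)\to H^1(\Gamma,\R)]$ is the right receptacle using that $f$ induces a surjection on $\pi_1$ onto $\pi_1^{\orb}(\mathcal C)$ (Nori's lemma / the orbifold $\pi_1$ exact sequence for fibrations with multiple fibres). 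Finally I would assemble the ``Moreover'' clause — that $\chi = [f^*\operatorname{Re}\omega]$ for an honest holomorphic \emph{logarithmic} $1$-form $\omega$ on $C$ — directly from the equality $\alpha_1 = f^*\omega$ together with the fact that $C$ is a quasi-projective curve so $H^0(C,\Omega^1_{\bar C}(\log))$ surjects onto $H^1(C,\mathbb C)^{(1,0)}$ in the relevant Hodge filtration piece.
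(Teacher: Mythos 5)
Your overall architecture --- represent $\chi$ by (the real part of) a logarithmic $1$-form, obtain (ii)$\Rightarrow$(i) from the emptiness of the BNS set of a hyperbolic orbicurve group plus functoriality of $\Sigma$ under the surjection $\pi_1(X)\twoheadrightarrow\pi_1^{\orb}(\mathcal C)$, and obtain (i)$\Rightarrow$(ii) from a quasi-projective Simpson Lefschetz theorem --- is the same as the paper's, and your easy direction is essentially complete. But in the hard direction there is a genuine gap exactly at the point you flag with ``I expect the main obstacle to be here'': you never supply the bridge between the group-theoretic hypothesis $[\chi]\notin\Sigma(\Gamma)$ (a statement about connectivity of subgraphs of a Cayley graph) and the geometric connectivity statement you want to combine with the Lefschetz theorem. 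In the compact case this bridge is the Bieri--Neumann--Strebel Theorem G (Theorem \ref{topological bns}): $[\chi]\in\Sigma(\Gamma)$ iff the set $h^{-1}([0,\infty))$ in the relevant cover is connected; its proof genuinely uses compactness, and invoking ``an action on a tree'' or ``an unbounded harmonic equivariant map'' does not replace it for non-compact $X$. The paper's central new ingredient is precisely this replacement: the property $\beta$ (the primitive $h$ of $\pi^*\operatorname{Re}\omega$ is bounded above on a geometrically finite fundamental domain), Lemma \ref{topological bns unbounded} showing that property $\beta$ together with connectivity of $h^{-1}([0,\infty))$ forces $[\chi]\in\Sigma(\Gamma)$, and Theorem \ref{beta theorem} constructing such a pair $(\Phi,\omega)$ for every class --- via standard prisms in the universal cover of the $\chi$-Albanese semiabelian variety, tame (semialgebraic) fundamental domains, and the finiteness Lemma \ref{finiteness of fundamental domains}. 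Without some version of this boundedness argument your contrapositive (``$\dim\alb_\chi(X)\ge 2$ plus Lefschetz connectivity implies $[\chi]\in\Sigma(\Gamma)$, contradiction'') does not get off the ground.

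Two smaller omissions are fixable but should be named. First, the Lefschetz theorem available here (Theorem \ref{simpson lefschetz qp}) is proved only for non-extensible classes --- non-extensibility is what makes $\alb_\chi$ proper --- so one needs the reduction via Brunebarbe's extension $X\hookrightarrow X'$ together with the fact that $\ker[\pi_1(X)\to\pi_1(X')]$ is finitely generated, so that membership in $\Sigma$ transfers correctly (Proposition \ref{to non-ext} and Proposition \ref{bns pullback}); your remark about ``running the argument on $X$ itself and controlling the leaves near $D$'' points at the issue but is not the mechanism used. Second, the passage from ``the $\chi$-Albanese image is a curve'' to statement (ii) is not via compactness of leaves of a foliation but via Stein factorisation of $\alb_\chi$ and the orbifold exact sequence; the non-hyperbolic orbifold bases are then excluded as you suggest (rational coarse space admits no nonconstant map to a semiabelian variety; Euler characteristic zero forces an abelian orbifold fundamental group, whose BNS set is the whole sphere, contradicting $[\chi]\notin\Sigma(\Gamma)$ through the finitely generated kernel). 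The property-$\beta$ step is the real missing idea.
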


\begin{mainthm}[Theorem \ref{solvable quasi-proj}]\label{solvable quasi-proj main}
Let $X$ be a smooth complex quasi-projective variety. Suppose that $Q$ is a finitely generated solvable group and $\phi \colon \pi_1(X) \to Q$ is a surjective homomorphism. Suppose also that $Q$ is not virtually nilpotent. Then there exists a finite \'etale cover $p \colon X_1 \to X$ and a surjective fibration $f \colon X_1 \to \mathcal{C}$ on a smooth hyperbolic orbicurve $\mathcal{C}$.
\end{mainthm}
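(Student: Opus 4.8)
The plan is to reduce Theorem~\ref{solvable quasi-proj main} to Theorem~\ref{bns quasi-proj main} via the Bieri-Strebel theory of finitely generated solvable groups, exactly as Delzant does in the compact case (cf.\ \cite{Delz}, \S 3), with the quasi-projective BNS description substituted for the K\"ahler one. First I would recall the Bieri-Strebel criterion: a finitely generated solvable group $Q$ that is \emph{not} virtually nilpotent (equivalently, in the solvable setting, not polycyclic) must admit a quotient isomorphic to a group of the form $G \rtimes_\phi \Z$ that is not finitely presented — more precisely, one extracts from the metabelianisation of a suitable subquotient a nontrivial character $\psi \colon Q \to \Z$ (or $\Z^2$) such that both $[\psi]$ and $[-\psi]$ lie outside $\Sigma(Q)$; the point is that a finitely generated solvable group $Q$ with $-\Sigma(Q) \cup \Sigma(Q) = \mathbb{S}H^1(Q,\R)$ (i.e.\ $\Sigma^c(Q)$ is ``antipodally closed'' — contains no antipodal pair) is polycyclic, hence virtually nilpotent. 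So non-virtual-nilpotency of $Q$ produces a character $\chi_0 \in H^1(Q,\Z) \subseteq H^1(Q,\R)$ with $[\chi_0], [-\chi_0] \notin \Sigma(Q)$.

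Next I would pull this character back to $\Gamma = \pi_1(X)$ along $\phi$. By the standard functoriality/monotonicity of the BNS invariant under surjections (if $N \le \ker\phi$ then $[\chi] \in \Sigma(\Gamma)$ implies $[\bar\chi] \in \Sigma(\Gamma/N)$; contrapositively $[\bar\chi]\notin\Sigma$ pulls back to $[\chi]\notin\Sigma$), the class $\chi := \phi^*\chi_0 \in H^1(\Gamma,\R)$ satisfies $[\chi], [-\chi] \notin \Sigma(\Gamma)$. Moreover $\chi$ is integral and nonzero (since $\phi$ is surjective and $\chi_0 \ne 0$). Now apply Theorem~\ref{bns quasi-proj main}(ii) to $\chi$: there is a surjective algebraic fibration $f \colon X \to \mathcal C$ onto a quasi-projective hyperbolic orbicurve with $\chi \in \im[f^* \colon H^1(\pi_1^{\orb}(\mathcal C),\R) \to H^1(\Gamma,\R)]$. (Applying it to $-\chi$ gives the same $f$, since the fibration is determined by the associated pencil/the kernel of $\chi$ on $\pi_1$; the finiteness of such fibrations from the excerpt's remark guarantees there's no ambiguity.) This already yields a fibration over a hyperbolic orbicurve, but \emph{on $X$ itself}, so the remaining issue is to pass to a finite \'etale cover so that the orbifold structure can be absorbed, i.e.\ so that $\mathcal C$ becomes a genuine smooth curve (or at least so that the statement matches the desired ``smooth hyperbolic orbicurve $\mathcal C$'' — here I would check what precise form is wanted; if orbifold targets are allowed in the conclusion, this step is unnecessary and $X_1 = X$, $f$ as constructed works).

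If a smooth (non-orbifold) curve target is required in the conclusion, the passage to a cover goes as follows: the orbifold fundamental group $\pi_1^{\orb}(\mathcal C)$ is a lattice in $\mathrm{PSL}_2(\R)$ (hyperbolicity) or a virtually-$\Z$ / virtually-free group in the parabolic boundary cases, and it contains a finite-index \emph{surface} subgroup $\pi_1(C')$ for a smooth hyperbolic curve $C'$ by Selberg's lemma / the resolution of orbifold covers. Pulling back the corresponding finite \'etale cover $\mathcal C' \to \mathcal C$ along $f$ gives $X_1 := X \times_{\mathcal C} \mathcal C' \to X$ finite \'etale (étaleness is preserved by base change, and $\mathcal C' \to \mathcal C$ is étale in the orbifold sense away from the orbifold locus, where one uses that $f$ is a fibration to lift), together with a surjective fibration $X_1 \to \mathcal C'$ onto a smooth hyperbolic curve. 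One must verify that $X_1$ is connected (replace $\mathcal C' \to \mathcal C$ by a Galois cover and take a connected component, or use that $f$ has connected fibres so $\pi_1(X) \to \pi_1^{\orb}(\mathcal C)$ is surjective and the pullback cover corresponds to a finite-index subgroup of $\pi_1(X)$) and that it is smooth quasi-projective (clear from base change along the étale, hence smooth, morphism).

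I expect the main obstacle to be \emph{not} the topology but the bookkeeping at the orbifold/boundary points: making the base change $X_1 = X \times_{\mathcal C} \mathcal C'$ genuinely finite \'etale over $X$ when $\mathcal C$ has nontrivial orbifold structure requires knowing that the multiple fibres of $f$ over the orbifold points have the ``right'' multiplicities — this is precisely the content of working with $\pi_1^{\orb}(\mathcal C)$ rather than $\pi_1$ of the coarse space, and is exactly why Theorem~\ref{bns quasi-proj main} is phrased with the orbifold fundamental group. So the real work has already been front-loaded into Theorem~\ref{bns quasi-proj main}; the proof of Theorem~\ref{solvable quasi-proj main} is then a relatively short deduction: (1) Bieri-Strebel to get an antipodal pair outside $\Sigma(Q)$; (2) pull back to $\Gamma$; (3) invoke Theorem~\ref{bns quasi-proj main}; (4) Selberg + fibre-product base change to kill the orbifold structure and produce $X_1 \to \mathcal C$. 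A minor subtlety to address is the parabolic (non-hyperbolic-orbifold but still ``hyperbolic'' in the relevant sense, e.g.\ $\C^*$-like) cases of the orbicurve, where the target has abelian-by-finite $\pi_1^{\orb}$; but in those cases the composed character would factor through a virtually abelian — hence polycyclic — quotient, contradicting that $[\chi]$ and $[-\chi]$ both avoid $\Sigma$, so such cases do not actually occur and $\mathcal C$ is genuinely of hyperbolic type.
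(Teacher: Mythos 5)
There is a genuine gap at the very first step, and it is essential. You claim that a finitely generated solvable group $Q$ which is not virtually nilpotent must admit a character $\chi_0$ with $[\chi_0],[-\chi_0]\notin\Sigma(Q)$, on the grounds that a solvable group whose BNS complement contains no antipodal pair is polycyclic, ``hence virtually nilpotent''. That last implication is false: polycyclic groups need not be virtually nilpotent. Take $Q=\Z^2\rtimes_A\Z$ with $A$ a hyperbolic integer matrix (a lattice in Sol). This group is polycyclic and of exponential growth, so it is solvable and not virtually nilpotent; yet every subgroup of a polycyclic group is finitely generated, in particular the kernel of every additive character, so by the Bieri--Strebel criterion (Theorem \ref{kernel}) $\Sigma(Q)$ is \emph{all} of $\mathbb{S}H^1(Q,\R)$. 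For such $Q$ no character of the kind your step (1) requires exists, so the purely BNS-theoretic route cannot produce the fibration, and your argument collapses exactly in this polycyclic case.

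This is precisely why the paper's proof is a dichotomy (Corollary \ref{dichotomy}). One first replaces $Q$ by a metabelian, non-virtually-nilpotent quotient $M_1$ of a finite-index subgroup of $\pi_1(X)$ (Theorem \ref{metabelian lemma}); this is where the finite \'etale cover $X_1\to X$ enters --- not to kill the orbifold structure, which is unnecessary since the conclusion allows an orbicurve target, so your Selberg-type step can be dropped. Then: if $DM_1$ is not finitely generated, a generic character with kernel $DM_1$ lies outside $\Sigma(M_1)$ by Theorem \ref{kernel}, and one concludes via Proposition \ref{bns pullback} and Theorem \ref{bns quasi-proj}, essentially as you propose. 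But if $DM_1$ is finitely generated (the polycyclic case), the paper switches tools entirely: Lemma \ref{characters} yields a non-torsion exceptional character $\theta\in\mathcal{V}^1(M_1)$, and Arapura's structure theorem for the first cohomology jump locus (Theorem \ref{GL set}) forces $\phi^*\theta$ to come from a fibration of $X_1$ over a hyperbolic orbicurve, the torsion alternative being excluded because it would make $M_1$ virtually nilpotent. Your proposal contains no substitute for this second branch, and since that branch cannot be reached by BNS considerations at all, the missing case is a genuine, not a cosmetic, gap.
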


From this, we deduce the following.

\begin{mainthm}[Corollary \ref{solv implies nilp}]\label{main solv implies nilp}
Let $X$ be a smooth complex quasi-projective variety. Assume that $\pi_1(X)$ is virtually solvable. Then $\pi_1(X)$ is virtually nilpotent.
\end{mainthm}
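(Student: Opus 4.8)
The plan is to deduce Theorem \ref{main solv implies nilp} from Theorem \ref{solvable quasi-proj main} by contradiction, exactly as Delzant deduces his Corollaire 3.4 from his Th\'eor\`eme 1.4. Suppose $\pi_1(X)$ is virtually solvable but not virtually nilpotent. Replacing $X$ by a finite \'etale cover corresponding to a finite-index solvable subgroup of $\pi_1(X)$ — which does not change the class of the group up to commensurability, and a finite \'etale cover of a smooth quasi-projective variety is again smooth quasi-projective — we may assume $\Gamma := \pi_1(X)$ is itself solvable and not virtually nilpotent. Apply Theorem \ref{solvable quasi-proj main} with $Q = \Gamma$ and $\phi = \mathrm{id}$: there is a finite \'etale cover $p \colon X_1 \to X$ and a surjective fibration $f \colon X_1 \to \mathcal{C}$ onto a smooth hyperbolic orbicurve $\mathcal{C}$.

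Next I would use the fibration to produce a quotient of $\pi_1(X_1)$ that is incompatible with virtual solvability. Since $\mathcal{C}$ is a smooth hyperbolic quasi-projective orbicurve, its orbifold fundamental group $\pi_1^{\mathrm{orb}}(\mathcal{C})$ is either a nonabelian free group (if $C$ is affine, i.e. has punctures) or a cocompact Fuchsian group, or more generally a hyperbolic $2$-orbifold group; in every hyperbolic case it contains a nonabelian free subgroup and in particular is not virtually solvable. Because $f$ is a surjective fibration with connected fibres, the induced map $f_* \colon \pi_1(X_1) \to \pi_1^{\mathrm{orb}}(\mathcal{C})$ is surjective (this is the standard homotopy-exact-sequence argument for orbifold fibrations, accounting for multiple fibres via the orbifold structure). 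Hence $\pi_1^{\mathrm{orb}}(\mathcal{C})$ is a quotient of $\pi_1(X_1)$. But $\pi_1(X_1)$ has finite index in $\Gamma$, hence is virtually solvable, and any quotient of a virtually solvable group is virtually solvable; a virtually solvable group contains no nonabelian free subgroup. This contradicts hyperbolicity of $\mathcal{C}$, completing the proof.

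The only genuinely delicate point in this deduction — everything else is formal group theory — is the surjectivity of $f_* \colon \pi_1(X_1) \to \pi_1^{\mathrm{orb}}(\mathcal{C})$ and the assertion that a hyperbolic orbicurve group is not virtually solvable. Both are standard but deserve a sentence: surjectivity onto the orbifold fundamental group (rather than merely onto $\pi_1(C)$) is exactly what the orbifold formalism is designed to give for a fibration, and the non-virtual-solvability of hyperbolic $2$-orbifold groups follows since they act properly discontinuously and cocompactly, up to finite index, on $\mathbb{H}^2$ (or are virtually free in the punctured case), so a finite-index subgroup is a surface group of genus $\geq 2$ or a nonabelian free group, neither of which is solvable. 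Once these are in place the contradiction is immediate, so I expect the write-up to be short, with the main content having already been carried by Theorem \ref{solvable quasi-proj main}.
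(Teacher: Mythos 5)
Your proposal is correct and follows essentially the same route as the paper: apply Theorem \ref{solvable quasi-proj main} to the identity homomorphism of a solvable finite-index subgroup of $\pi_1(X)$, obtain a finite \'etale cover fibring over a hyperbolic orbicurve with a surjection onto $\pi_1^{\orb}(\mathcal{C})$, and derive a contradiction from the non-solvability of that group. The only cosmetic difference is that the paper justifies non-solvability by citing the uniformisation result (Lemma \ref{uniformisation}), realising $\pi_1^{\orb}(\mathcal{C})$ as a lattice in $\operatorname{PSL}_2(\R)$, whereas you invoke the presence of a nonabelian free subgroup — the same fact in substance.
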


One of the ingredients of Theorem \ref{delzant bns} is Simpson's Lefschetz Theorem for integral leaves of holomorphic $1$-forms (\cite{Simp}). We prove a version of it for quasi-projective varieties. This result seems to be of independent interest.

Recall that if $X$ is a quasi-projective variety and $\chi \in H^1(X, \C)$, there exists a morphism to a  semiabelian variety $\alb_{\chi} \colon X \to A_{\chi}$ such that $\chi$ pullbacks from $A_{\chi}$ and $A_{\chi}$ is the smallest semiabelian variety with such property (see Section \ref{sl section} for the precise definition). We call this morphism \emph{the Albanese map associated with $\chi$}.

\begin{mainthm}[Theorem \ref{simpson lefschetz qp}, Theorem \ref{simpson lefschetz A}]\label{simpson lefschetz qp main}
Let $X$ be a smooth quasi-projective variety. Let $0 \neq \chi \in H^1(X, \C)$ and consider the associated Albanese map $\alb_{\chi} \colon X \to A_{\chi}$. Let $\pi_A \colon \widetilde{A_{\chi}} \to A_{\chi}$ be the universal cover of $A$. Let $Z:=X \times_{A_{\chi}} \widetilde{A_{\chi}}$ with the natural projection $\pi \colon Z \to X$. Let $\omega$ be a logarithmic holomorphic $1$-form representing the class $\chi$.  Let $g \colon Z \to \C$ be a holomorphic function such that $dg = \pi^*\omega$ and $h:=\operatorname{Re}(g) \colon Z \to \R$. 

Assume that $\dim \alb_{\chi}(X) \ge 2$. Let $\mathcal{P} \subset \C$ denote the set of critical values of $g$ and $\operatorname{Re}\mathcal{P} \subset \R$  its projection to the real line.
\begin{itemize}
\item[(i)] The set $g^{-1}(v)$ 
 is connected and the map $\pi_1(g^{-1}(v)) \to \pi_1(Z)$ is surjective for any $v \in \C\setminus \mathcal{P}$;
\item[(ii)] the set $h^{-1}(r)$ is connected and $\pi_1(h^{-1}(r)) \to \pi_1(Z)$ is surjective for any $r \in \R\setminus \operatorname{Re}\mathcal{P}$.
\end{itemize}
\end{mainthm}


The proof of Theorem \ref{simpson lefschetz qp main} is in many aspects parallel to the proof of the main result in \cite{Simp} but requires several additional modifications. We explain them in details in Appendix \ref{simpson lefschetz proof}.

We give two applications of our results. First, we strengthen the recent result of Cadorel-Deng-Yamanoi on the fundamental groups of h-special and weakly special quasi-projective varieties \cite[Theorem 11.2]{CDY}.

\begin{mainthm}[Corollary \ref{special theorem}]\label{special main}
Let $X$ be a smooth quasi-projective variety, either weakly special or $h$-special(see Definitions \ref{campana special} and \ref{h-special}). Let $Q$ be a finitely generated solvable group and $\rho \colon \pi_1(X) \to Q$ a surjective homomorphism. Then $Q$ is virtually nilpotent.
\end{mainthm}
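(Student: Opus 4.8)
The plan is to reduce Theorem~\ref{special main} to Theorem~\ref{solvable quasi-proj main} by showing that a weakly special or $h$-special manifold admits no surjective fibration onto a hyperbolic orbicurve, which forces the solvable quotient $Q$ to be virtually nilpotent. Suppose for contradiction that $Q$ is not virtually nilpotent. By Theorem~\ref{solvable quasi-proj main} there is a finite \'etale cover $p\colon X_1\to X$ and a surjective fibration $f\colon X_1\to\mathcal{C}$ onto a smooth hyperbolic orbicurve $\mathcal{C}$. The first step is to check that the relevant specialness property is inherited by $X_1$: weak specialness passes to \'etale covers essentially by definition (a tower of \'etale covers of $X_1$ is also a tower of \'etale covers of $X$), and $h$-specialness likewise is stable under finite \'etale covers (this is part of the basic formalism in \cite{CDY}; one uses that the Kobayashi/hyperbolicity-type pseudometric or the ``$h$'' condition pulls back and pushes forward along finite \'etale maps). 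So $X_1$ is again weakly special, respectively $h$-special.

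The heart of the argument is then the incompatibility of a fibration over a hyperbolic orbicurve with specialness. For the weakly special case, recall that $\mathcal{C}$ hyperbolic means the orbifold Euler characteristic is negative; in Campana's language such an orbicurve is of general type, and a surjective fibration $f\colon X_1\to\mathcal{C}$ with the orbifold structure $\mathcal{C}$ exactly exhibits a ``Bogomolov sheaf'' / orbifold morphism of general type, contradicting weak specialness (which asks that there be no such dominant map to a positive-dimensional orbifold of general type, after no \'etale cover). More precisely, I would invoke the characterization that a weakly special variety has no finite \'etale cover admitting a dominant rational map onto a positive-dimensional variety of general type, and upgrade it to the orbifold setting using that $\mathbb{P}^1$ or an elliptic curve with enough orbifold multiplicities becomes of general type; the fibration $f$ produces precisely this forbidden configuration. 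For the $h$-special case, the same contradiction is reached via the corresponding statement in \cite{CDY}: an $h$-special manifold admits no fibration (even orbifold) onto a hyperbolic orbicurve, since such a fibration would produce a nonconstant ``entire-curve-negative'' or Picard-type obstruction incompatible with $h$-specialness. Either way, the existence of $f\colon X_1\to\mathcal{C}$ is absurd, so $Q$ must be virtually nilpotent.

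I would organize the write-up as: (1) state that it suffices to rule out fibrations over hyperbolic orbicurves on \'etale covers; (2) invoke stability of (weak/$h$-)specialness under finite \'etale covers; (3) apply Theorem~\ref{solvable quasi-proj main} to get the contradiction. The main obstacle I anticipate is purely bookkeeping with the orbifold conventions: one must make sure that the orbicurve $\mathcal{C}$ produced by Theorem~\ref{solvable quasi-proj main} carries an orbifold structure \emph{of general type} in Campana's sense (hyperbolic orbicurve $=$ negative orbifold Euler characteristic $=$ orbifold of general type in dimension one), and that the ``no dominant map to general-type orbifold'' formulation of weak specialness is exactly the one matching \cite{CDY}'s Definition~\ref{campana special}. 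Once the dictionary between ``hyperbolic orbicurve,'' ``orbicurve of general type,'' and ``Bogomolov sheaf of rank one'' is pinned down, the contradiction is immediate and the proof is short. For the $h$-special case I expect to cite the precise non-existence-of-special-orbifibration statement from \cite{CDY} rather than reprove it.
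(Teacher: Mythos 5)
Your skeleton is the same as the paper's (assume $Q$ is not virtually nilpotent, apply Theorem~\ref{solvable quasi-proj} to get a finite \'etale cover $X_1\to X$ with a fibration $f\colon X_1\to\mathcal{C}$ over a hyperbolic orbicurve, then contradict specialness), but the weakly special half of your argument rests on a false principle. Weak specialness in the sense of Definition~\ref{campana special} forbids, after \'etale covers and modifications, fibrations onto \emph{varieties} of general type; it does \emph{not} forbid fibrations whose \emph{orbifold} base is of general type --- that stronger prohibition is Campana's specialness, and the Bogomolov--Tschinkel examples are weakly special yet fiber over hyperbolic orbicurves. So the step you describe as ``upgrade it to the orbifold setting'' is exactly the step that fails in general. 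What rescues the argument in the present situation is the extra datum that the fibration produced by Theorems~\ref{bns quasi-proj}/\ref{solvable quasi-proj} comes with a surjection $\pi_1(X_1)\twoheadrightarrow\pi_1^{\orb}(\mathcal{C})$, and $\pi_1^{\orb}(\mathcal{C})$ is a lattice in $\operatorname{PSL}_2(\R)$ (Lemma~\ref{uniformisation}). By Selberg's lemma choose a torsion-free finite-index subgroup, pull it back to a finite \'etale cover $X_2\to X_1\to X$, and lift $f$ to a fibration of $X_2$ over the corresponding smooth hyperbolic quasi-projective curve, which \emph{is} of (log) general type; this contradicts Definition~\ref{campana special} applied directly to the \'etale cover $X_2$ of $X$ --- note that no ascent of weak specialness to $X_1$ is needed, since the definition already quantifies over all \'etale covers of $X$. (The paper's own proof is terse at this point, but it never asserts your orbifold reformulation of weak specialness.)

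For the $h$-special half, your route needs two inputs you do not justify: upward stability of $h$-specialness under finite \'etale covers, which is not formal from Definition~\ref{h-special} (a chain of entire-curve closures downstairs lifts, but it connects \emph{some} preimages of the endpoints, not two prescribed ones, so Zariski density of the relation upstairs requires an argument or a precise reference), and a ``no fibration over a hyperbolic orbicurve'' statement from \cite{CDY} which is essentially the point to be proven. The paper instead argues directly in one line: since $\mathcal{C}$ is hyperbolic, its orbifold universal cover is the disc, so every entire curve into $\mathcal{C}$ is constant; hence every entire curve in $X_1$ lies in a fibre of $f$, a chain of closures of such curves lies in a single fibre, and the relation $R$ of Definition~\ref{h-special} is contained in $X_1\times_{\mathcal{C}}X_1$, hence not Zariski dense. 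To conclude that $X$ itself is not $h$-special without invoking \'etale invariance, compose with the Galois closure $X_2\to X$: every entire curve in $X$ lifts, is collapsed by all Galois translates of the lifted fibration, hence lies in a fibre of the induced non-constant algebraic map from $X$, and the same containment argument applies. Replacing your two citations by this short direct argument closes the gap.
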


Second, we generalise Suciu's result on the BNS sets and tropicalisations of cohomology jump loci. In particular, we answer Suciu's question about the topology of hyperplane arrangements.

\begin{mainthm}[Theorem \ref{suciu sharp}]\label{suciu sharp main}
Let $X$ be a smooth complex quasi-projective algebraic variety and $\Gamma:=\pi_1(X)$. Let 
\[
\mathcal{V}^1(X):=\{\theta \in H^1(\Gamma, \C^{\times}) \ | H^1(\Gamma, \C_{\theta}) \neq 0\} 
\]
be its first cohomology jump locus. Then
\[
\iota(\Sigma(\Gamma))=\mathbb{S} \left ( H^1(\Gamma, \R) \setminus \Trop(\mathcal{V}^1(\Gamma)) \right ),
\]
where $\Trop(\mathcal{V}^1(\Gamma))$ is the \emph{tropicalisation} of $\mathcal{V}^1(\Gamma)$ (see subsection \ref{tropical}) and $\iota$ is the antipodal involution $x \mapsto -x$ of the sphere $\mathbb{S}H^1(\Gamma, \R)$.
\end{mainthm}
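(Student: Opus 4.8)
The plan is to deduce Theorem \ref{suciu sharp main} from the characterisation of the BNS complement in Theorem \ref{bns quasi-proj main} together with the structure theory of cohomology jump loci of quasi-projective varieties. Recall (Arapura, Budur--Wang, Dimca--Papadima--Suciu) that every positive-dimensional irreducible component of $\mathcal{V}^1(\Gamma)$ is a torsion-translated subtorus of $H^1(\Gamma, \C^\times)$ arising from an \emph{admissible map} $f \colon X \to \mathcal{C}$ onto a quasi-projective hyperbolic orbicurve: namely it has the form $\rho \cdot f^*H^1(\pi_1^{\orb}(\mathcal{C}), \C^\times)$ for some torsion character $\rho$. The tropicalisation $\Trop(\mathcal{V}^1(\Gamma))$ (see subsection \ref{tropical}) only sees the linear parts of these components, so $\Trop(\mathcal{V}^1(\Gamma)) = \bigcup_i f_i^* H^1(\pi_1^{\orb}(\mathcal{C}_i), \R)$, the union running over the finitely many orbifold fibrations $f_i$ of $X$ onto hyperbolic orbicurves (plus possibly the zero-dimensional components, which tropicalise to $\{0\}$ and do not affect the spherization). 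On the other hand, Theorem \ref{bns quasi-proj main}(ii) says exactly that $[\chi] \notin \Sigma(\Gamma)$ if and only if $\chi \in \im[f^* \colon H^1(\pi_1^{\orb}(\mathcal{C}), \R) \to H^1(\Gamma, \R)]$ for one of these same orbifold fibrations $f$. Comparing the two, the complement of $\Sigma(\Gamma)$ and $\Trop(\mathcal{V}^1(\Gamma))$ are described by the \emph{same} union of linear subspaces, which gives $\mathbb{S}(H^1(\Gamma,\R) \setminus \Trop(\mathcal{V}^1(\Gamma))) = \mathbb{S}(H^1(\Gamma, \R) \setminus \Sigma(\Gamma)^c) $ as subsets of the sphere — modulo the antipodal map.

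The appearance of the involution $\iota$ is the one genuinely delicate bookkeeping point, and I would address it as follows. The BNS set is not symmetric: $[\chi] \in \Sigma(\Gamma)$ does not imply $[-\chi] \in \Sigma(\Gamma)$. However, a ray $[\chi]$ lies in the BNS \emph{complement} via an orbifold fibration precisely when $-\chi$ does, because $\im[f^*]$ is a linear subspace and hence symmetric. So the complement $\mathbb{S}H^1(\Gamma,\R) \setminus \Sigma(\Gamma)$ is itself $\iota$-invariant, being a union of spherized linear subspaces. Meanwhile $\Trop(\mathcal{V}^1(\Gamma))$ is also a union of linear subspaces, hence $\iota$-invariant. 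The subtlety is purely in matching Suciu's sign conventions: in his set-up $\mathcal{V}^1$ and $\Sigma$ are related with a sign twist coming from the identification of a character $\chi \colon \Gamma \to \R$ with the exponential character $\gamma \mapsto \exp(2\pi i \chi(\gamma))$ versus $\gamma \mapsto \exp(-2\pi i \chi(\gamma))$, and from the fact that $\Sigma(\Gamma)$ records where the monoid $\Gamma_\chi = \{\gamma : \chi(\gamma) \ge 0\}$ makes the associated cover finitely generated. Once the complement-side identities are established, I would simply quote the elementary fact (which I expect is already recorded in subsection \ref{tropical} or is a standard normalization lemma) that the tropicalisation of $\mathcal{V}^1$ matches the BNS complement up to $\iota$; then the statement $\iota(\Sigma(\Gamma)) = \mathbb{S}(H^1(\Gamma,\R) \setminus \Trop(\mathcal{V}^1(\Gamma)))$ follows by taking complements in the sphere and applying $\iota$.

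Concretely the steps are: (1) invoke Theorem \ref{bns quasi-proj main} to write $\mathbb{S}H^1(\Gamma,\R) \setminus \Sigma(\Gamma) = \bigcup_i \mathbb{S}[f_i^* H^1(\pi_1^{\orb}(\mathcal{C}_i), \R)]$, the union over the finitely many orbifold fibrations onto hyperbolic orbicurves; (2) invoke the Arapura--Budur--Wang structure theorem for $\mathcal{V}^1(X)$ to identify its positive-dimensional components as translates of the subtori $f_i^* H^1(\pi_1^{\orb}(\mathcal{C}_i), \C^\times)$ pulled back along the \emph{same} maps (using that isotrivial/translated components over points tropicalise trivially); (3) compute $\Trop(\mathcal{V}^1(\Gamma)) = \bigcup_i f_i^* H^1(\pi_1^{\orb}(\mathcal{C}_i), \R)$ directly from the definition of tropicalisation of a torsion-translated subtorus — the linear part of such a component is exactly the real span of the cocharacter lattice of the subtorus; (4) combine (1) and (3): the two unions coincide, so $\mathbb{S}(H^1(\Gamma,\R)\setminus \Trop(\mathcal{V}^1(\Gamma))) = \Sigma(\Gamma)$ \emph{up to the antipodal map}; (5) track the sign to land on $\iota(\Sigma(\Gamma))$ rather than $\Sigma(\Gamma)$. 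Finally, the hyperplane-arrangement application follows since for a complement $X = \C^n \setminus \bigcup H_i$ the jump locus $\mathcal{V}^1(X)$ is computable combinatorially (Orlik--Solomon / Falk--Yuzvinsky), so Suciu's tropical upper bound $\Sigma(\Gamma)^c \subseteq \mathbb{S}\,\Trop(\mathcal{V}^1(\Gamma))$ is now shown to be an equality.

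I expect step (2) — pinning down that \emph{every} relevant component of $\mathcal{V}^1$ comes from one of the \emph{same} orbifold fibrations that appear in the BNS description, with compatible normalizations of $\pi_1^{\orb}$ — to be the main technical obstacle. The translated-torus structure theorem is available, but one must be careful that the orbifold structures appearing in Theorem \ref{bns quasi-proj main}(ii) (which track the multiple fibres of $f$) are precisely those for which $f^* H^1(\pi_1^{\orb}(\mathcal{C}), \C^\times)$ gives a full component of $\mathcal{V}^1$, rather than a proper subtorus; this is essentially the statement that the ``orbifold'' BNS fibrations and the ``admissible'' jump-locus fibrations are the same list, which should follow from the finiteness of fibrations (\cite{Py}, Section 2.3) and the fact that both lists are characterised by the hyperbolicity of the orbifold base. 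The sign-tracking in step (5) is routine once one fixes conventions consistently with Suciu's, but it is the kind of thing that is easy to get backwards, so I would state the convention explicitly at the start of the proof.
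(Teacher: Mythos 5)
Your proposal is correct and follows essentially the same route as the paper: Theorem \ref{bns quasi-proj main} identifies the BNS complement with the finite union $\bigcup_i f_i^*H^1(\pi_1^{\orb}(\mathcal{C}_i),\R)$, the Arapura--ACM structure theorem (Theorem \ref{GL set}) plus functoriality of $\Trop$ identifies $\Trop(\mathcal{V}^1(\Gamma))$ with the same union, and the involution $\iota$ is disposed of exactly as you note, by the $\iota$-invariance of $\Sigma(\Gamma)$ coming from Theorem \ref{bns quasi-proj main}. The ``main technical obstacle'' you flag is handled in the paper more simply than you anticipate: one does not need each $f^*\T(\mathcal{C})$ to be a full irreducible component of $\mathcal{V}^1(X)$, only the containment $f^*\mathcal{V}^1(\pi_1^{\orb}(\mathcal{C}))\subseteq\mathcal{V}^1(\Gamma)$ (functoriality under surjections) together with the fact, cited from \cite{ACM}, that $\mathcal{V}^1(\pi_1^{\orb}(\mathcal{C}))$ contains a whole connected component of $\T(\mathcal{C})$, whence $\Trop(\mathcal{V}^1(\pi_1^{\orb}(\mathcal{C})))=H^1(\mathcal{C},\R)$.
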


In the setting of Theorem \ref{suciu sharp main} the set $\Sigma(\Gamma)$ is always $\iota$-invariant, as follows from Theorem \ref{bns quasi-proj main}. The reason we formulate our results in this form is that Suciu showed the inclusion 
\[
\iota(\Sigma(\Gamma)) \subseteq \mathbb{S} \left ( H^1(\Gamma, \R) \setminus \Trop(\mathcal{V}^1(\Gamma)) \right )\]
for an arbitrary finitely generated group $\Gamma$ \cite{Suc21}. For arbitrary $\Gamma$ it is not true that $\Sigma(\Gamma)$ is $\iota$-invariant, nor that the equality of Theorem \ref{suciu sharp main} holds. Therefore, Theorems \ref{bns quasi-proj main}, \ref{main solv implies nilp} and \ref{suciu sharp main} give new nontrivial restrictions on the topology of complex smooth quasi-projective varieties. 

Theorems \ref{simpson lefschetz qp main} and \ref{suciu sharp main} were formulated as conjectures in a recent survey \cite{Liu}.

\begin{rmk}
We have been asked several times whether our theorems are also valid for quasi-K\"ahler manifolds. It seems  very plausible to us that this is the case, although a proof  would require several technical modifications that we outline below:
\begin{itemize}
\item One of the crucial steps of the proof is the existence of Albanese maps (Theorem \ref{albanese exists}). For quasi-K\"ahler manifolds, an analogous result is a classical folklore, but is difficult (if not impossible) to locate in the literature;
\item At first sight, algebraicity of $X$ is used in subsection \ref{semialgebraic domains}, where we construct semialgebraic domains. In fact, the same arguments can be extended to the quasi-K\"ahler case through the use of o-minimal geometry. Namely, every quasi-K\"ahler manifold is a $\R_{\operatorname{an}}$-definable complex analytic space in the sense of \cite{BBT} and replacing the word <<semialgebraic>> by <<$\R_{\operatorname{an}}$-definable>> everywhere, one may adapt the arguments involving fundamental domains to the quasi-K\"ahler situation;
\item Lefschetz Hyperplane Section theorem is used in our proof of Simpson's Lefschetz theorem, namely in Lemma \ref{hyperplane}. Reduction to the case of surfaces, as in Lemma \ref{hyperplane}, is not possible in the quasi-K\"ahler setting. We believe that the analogue of quasi-projective Simpson's Lefschetz theorem (Theorem \ref{simpson lefschetz A}) still holds for quasi-K\"ahler manifolds, but the proof of each step, especially the one of Proposition \ref{local milnor},  becomes more technical.
\end{itemize}
The rest of the proof apply to quasi-K\"ahler manifolds without any modifications.
\end{rmk}
\subsection{Strategy of the proof}

Delzant's original proof of Theorem \ref{delzant bns} is based on two ingredients: the geometric characterisation of the BNS set of the fundamental group of a compact CW-complex due to Bieri-Neumann-Strebel (\cite[Theorem G]{BNS}; Theorem \ref{topological bns} below) and Simpson's Lefschetz theorem for $1$-forms (\cite{Simp}). Both of these ingridients need to be generalised to the noncompact case.

In order to extend the Bieri-Neumann-Strebel characterisation to the noncompact case, we introduce the notion of a \emph{taming pair} (Definition \ref{taming pair def}). Fix a smooth manifold $X$ and a nonzero class $\chi \in H^1(\pi_1(X), \R)$. Let $\pi \colon \widetilde{X} \to X$ be the universal cover,  $\Phi$ be a $\pi_1(X)$-fundamental domain in $\tX$ and $\alpha$ be a de Rham representative for the class $\chi$ under the natural identification $H^1(\pi_1(X), \R) \xrightarrow{\sim} H^1_{\operatorname{dR}}(X)$.  The pair $(\Phi, \alpha)$ is \emph{taming} for $\chi$ if $\pi^*\alpha=dh$, where the smooth function $h \colon \widetilde{X} \to \R$ is bounded from above on $\Phi$. As we show (Lemma \ref{bns tamed}), the assumption on the existence of a taming pair effectively replaces the compactness assumption in the Bieri-Neumann-Strebel criterion.

The next step is to check that taming pairs always exist in the setting of algebraic geometry. We prove that if $X$ is a smooth complex quasi-projective variety and $\chi \in H^1(X, \R)$ is a nonzero class, there is a taming pair $(\Phi, \alpha)$ (Theorem \ref{beta theorem}). Moreover, it turns out that $\alpha$ can be chosen to be the real part of a logarithmic holomorphic $1$-form.

The generalisation of the second ingredient ---  Simpson's Lefschetz theorem for 1-forms, -- is more direct and is already known in many special cases (\cite{CS}, \cite{RG}, \cite{BBT24}).  We discuss the proof in Appendix \ref{simpson lefschetz proof}, mainly focusing on the difficulties arising from the noncompactness of $X$ and referring to the corresponding places in \cite{Simp} when the arguments translate verbatim.

To summarise, the proof of Theorem \ref{bns quasi-proj main} is based on the following facts:
\begin{itemize}
\item A characterisation of BNS sets of fundamental groups of non-compact manifolds under the assumption of existence of a \emph{taming pair} (Lemma \ref{bns tamed});
\item Existence of taming pairs on complex quasi-projective varieties (Theorem \ref{beta theorem});
\item Simpson's Lefschetz Theorem for nonextendable classes on quasi-projective varieties (proved in the Appendix, Theorem \ref{simpson lefschetz A}).
\end{itemize}

Once these ingredients are prepared, the rest of the proof broadly follows the logic of \cite{Delz}.

\subsection{Organisation of the paper}
The paper is organised as follows. In Section \ref{bns} we collect the necessary preliminaries from group theory, topology and algebraic geometry. First, we briefly recall the basics of the BNS sets of abstract finitely generated groups (subsection \ref{bns basic}). In subsection \ref{bns geometric} we recall the geometric meaning of the BNS set $\Sigma(\Gamma)$ in the case where $\Gamma$ is a fundamental group of a compact manifold.  

Before generalising the results of subsection \ref{bns geometric} to the noncompact case, we need to make an interlude on fundamental domains. We gather the general facts about fundamental domains in subsection \ref{domains} and discuss fundamental domains in the universal covers of algebraic varieties in subsection \ref{semialgebraic domains}.

In subsection \ref{bns criterion noncomp} we introduce the notion of a \emph{taming pair} and use it to formulate and prove a noncompact version of the geometric BNS criterion.

In Section \ref{beta quasi-proj} we study the existence of taming pairs on quasi-projective varieties.  We show (Theorem \ref{beta theorem}) that any class in the first cohomology of a smooth quasi-projective complex variety admits a taming pair. First, we prove it in the specific case of semiabelian varieties. We discuss fundamental domains of semiabelian varieties in subsection \ref{prisms}. In subsection \ref{semiabelian forms} we discuss logarithmic 1-forms on semiabelian varieties and prove Theorem \ref{beta theorem} for semiabelian varieties (Lemma \ref{beta semiabelian}). We prove the general version of Theorem \ref{beta theorem} in subsection \ref{general beta}.

 In Section \ref{sl section} we recall Simpson's Lefschetz theorem and relate it to the BNS sets. The proof of the main Theorem of this section (Theorem \ref{simpson lefschetz qp}) is given in the Appendix.

We prove Theorem \ref{bns quasi-proj main} in Section \ref{proof of A}.

In Section \ref{proof of B} we recall some necessary facts from group theory and the basics on cohomology jump loci of quasi-projective varieties (subsection \ref{metabelian}). Theorem \ref{solvable quasi-proj main} is proved in subsection \ref{solvable quotients}.

We discuss the applications of our results in Section \ref{applications}. We recall the needed definitions and prove Theorem \ref{special main} in subsection \ref{special}. In subsection \ref{tropical} we discuss the tropicalisations of the cohomology jump loci and prove Theorem \ref{suciu sharp main}.

In  Appendix \ref{simpson lefschetz proof} we prove Theorem \ref{simpson lefschetz qp main}. We reduce everything to the case where $X$ is a surface and the associated Albanese map is equidimensional in subsection \ref{reduction}. We recall some facts from homotopy theory in subsection \ref{homotopy s}, study the local topology near a singular fibre in subsection \ref{local topology}, do necessary preparations with vector fields in order to be able to trivialise the map outside singular fibres in subsection \ref{vector fields s} and construct such a trivialisation in subsection \ref{ehresmann s}. We conclude the proof by repeating the arguments of \cite{Simp} in subsection \ref{end}.
\\

\textbf{Aknowledgements.} I am deeply thankful to Fr\'ed\'eric Campana, Thomas Delzant, Ya Deng, Ziyun He, Misha Kapovich, Bruno Klingler and Alex Suciu for the many fruitful conversations and correspondences that I had with them on various stages of this project. I am also thankful to Alex Suciu for turning my attention to the paper \cite{Suc21}. This work is partially funded by the Deutsche Forschungsgemeinschaft (DFG, German Research Foundation) under Germany´s Excellence Strategy – The Berlin Mathematics
Research Center MATH+ (EXC-2046/1, project ID: 390685689).

\section{Preliminaries}\label{bns}

\subsection{The BNS sets of finitely generated groups}\label{bns basic}
Let $\Gamma$ be a finitely generated group. Let $S$ be a finite set of generators of $\Gamma$ which is symmetric, i.e. $S=S^{-1}$. To it, one associates the Cayley graph $C(\Gamma, S)$. This is the graph whose vertices are the elements of $\Gamma$ and two vertices $\gamma$ and $\gamma'$ are connected by an edge if and only if $\gamma'=s\gamma$ for some $s \in S$.

Let now $\chi \colon \Gamma \to (\R, +)$ be a nonzero real additive character. For a constant $a \in \R$ we denote by $C_{\chi \ge a}(\Gamma, S)$ the subgraph spanned by the elements $\{\gamma \in \Gamma \ | \ \chi(\gamma) \ge a\}$. We write $C_{\chi}(\Gamma, S):=C_{\chi \ge 0}(\Gamma, S)$. 

The space of additive characters $\{ \chi \colon \Gamma \to (\R,+)\}$ naturally identifies with the first cohomology group $H^1(\Gamma , \R)$. The subgraph $C_{\chi}(\Gamma, S)$ does not change when $\chi$ is multiplied by a nonnegative real constant $\lambda$. In other words, for a fixed $(\Gamma, S)$ the graph $C_{\chi}(\Gamma, S)$ depends only on the corresponding point in the \emph{spherisation} of the first cohomology group
\[
[\chi] \in \mathbb{S}H^1(\Gamma, \R):=\left( H^1(\Gamma, \R) \setminus \{0\} \right) \big / \R^{\times}_{>0}.
\]

\begin{df}[Bieri - Neumann - Strebel] \label{bns def}
The set 
\[
\Sigma(\Gamma):=\{[\chi] \ | C_{\chi}(\Gamma, S) \text{ is connected for some } S\} \subseteq \mathbb{S}H^1(\Gamma, \R)
\]
is called the \emph{Bieri-Neumann-Strebel} (or \emph{BNS}) \emph{set} of $\Gamma$.
\end{df}

The following lemma shows that this notion in fact does not depend on the choice of the generating set $S$.

\begin{lemma}[\cite{Py}, Proposition 11.1, Lemma 11.4]\label{equivalence}
Let $\Gamma$ be a finitely generated group and $\chi \in H^1(\Gamma, \R)$ a nonzero character. Suppose that $C_{\chi \ge a}(\Gamma, S)$ is connected  for some symmetric finite generating set $S$ and some constant $a \in \R$. Then $C_{\chi\ge a'}(\Gamma, S')$ is connected for any other symmetric finite generating set $S' \subseteq \Gamma$ and any constant $a' \in \R$.
\end{lemma}

\begin{prop}\label{step back}
Assume there exists $t >0$ such that every vertex in $C_{\chi}(\Gamma, S)$ can be connected with the unity $e \in C(\Gamma, S)$ by a path inside $C_{\chi \ge -t}(\Gamma, S)$. Then $C_{\chi}(\Gamma, S)$ is connected.
\end{prop}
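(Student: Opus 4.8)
The plan is to verify directly that the subgraph $C_\chi(\Gamma,S)=C_{\chi\ge 0}(\Gamma,S)$ is connected, by joining every vertex $\gamma$ with $\chi(\gamma)\ge 0$ to the unity $e$ by a path that stays inside $C_\chi(\Gamma,S)$. The hypothesis already gives a path $p_\gamma$ from $\gamma$ to $e$ inside $C_{\chi\ge -t}(\Gamma,S)$; the whole task is to trade its bounded excursions below $0$ for a genuine path in the non-negative part, and I would do this with two elementary moves.

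First, a ``going-up'' move. Since $\chi\ne 0$ and $S=S^{-1}$ generates $\Gamma$, some generator $s_+\in S$ satisfies $c:=\chi(s_+)>0$. Fix the positive integer $m_0:=\lceil t/c\rceil$ and set $g:=s_+^{m_0}$, so that $\chi(g)=m_0c\ge t$. For any vertex $v$ with $\chi(v)\ge 0$ the sequence $v,\,s_+v,\,s_+^2v,\dots,s_+^{m_0}v=gv$ is a path in $C(\Gamma,S)$ whose vertices have $\chi$-values $\chi(v),\chi(v)+c,\dots,\chi(v)+m_0c$, all $\ge\chi(v)\ge 0$; hence $v$ and $gv$ lie in the same connected component of $C_\chi(\Gamma,S)$. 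In particular, taking $v=e$, the vertex $g$ is connected to $e$ inside $C_\chi(\Gamma,S)$.

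Second, a ``translation'' move. Right translation $R_g\colon\delta\mapsto\delta g$ is an automorphism of the Cayley graph $C(\Gamma,S)$ (if $\delta'=s\delta$ then $\delta'g=s(\delta g)$) and it shifts $\chi$ by $\chi(g)$. Applying $R_g$ to the path from $\gamma'$ to $e$ furnished by the hypothesis, for any $\gamma'$ with $\chi(\gamma')\ge 0$, yields a path from $\gamma'g$ to $g$ all of whose vertices have $\chi$-value $\ge -t+\chi(g)\ge 0$; so $\gamma'g$ is connected to $g$, and hence to $e$, inside $C_\chi(\Gamma,S)$. Taking $\gamma'=wg^{-1}$ for any vertex $w$ with $\chi(w)\ge\chi(g)$ (so that $\chi(wg^{-1})\ge 0$), we conclude that every such $w$ is connected to $e$ inside $C_\chi(\Gamma,S)$.

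To finish: given any $\gamma$ with $\chi(\gamma)\ge 0$, the going-up move connects $\gamma$ to $g\gamma$ inside $C_\chi(\Gamma,S)$, and $\chi(g\gamma)=\chi(g)+\chi(\gamma)\ge\chi(g)$, so by the previous step $g\gamma$, and therefore $\gamma$, is connected to $e$ inside $C_\chi(\Gamma,S)$. The one point that needs care is the asymmetry between the two moves: one must climb upward by \emph{left}-multiplying by $s_+$, which keeps every intermediate vertex non-negative, whereas the hypothesis path is raised in height by \emph{right}-multiplication, which is the operation that preserves Cayley edges. Once this distinction is kept straight each step is a routine check, and Lemma~\ref{equivalence} may be invoked freely if one wishes to normalise the generating set or the threshold constant along the way.
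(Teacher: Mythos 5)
Your proof is correct. It shares the paper's main engine --- choosing $s_+\in S$ with $c:=\chi(s_+)>0$ and climbing along it --- but the routes then diverge: the paper connects an arbitrary $\gamma\in C_{\chi\ge -t}(\Gamma,S)$ to $s_+^n\gamma\in C_{\chi}(\Gamma,S)$ and then to $e$ via the hypothesis path, concluding that $C_{\chi\ge -t}(\Gamma,S)$ is connected, and finally transfers connectivity to $C_{\chi}(\Gamma,S)$ by invoking Lemma~\ref{equivalence}~\textit{(iii)}; you instead stay inside $C_{\chi}(\Gamma,S)$ throughout, using the right-translation $R_g$ with $\chi(g)\ge t$ (a Cayley-graph automorphism shifting $\chi$ by the constant $\chi(g)$) to push the hypothesis path up into the non-negative region. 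In effect you have inlined the threshold-shifting argument that underlies Lemma~\ref{equivalence}~\textit{(iii)}, so your argument is self-contained where the paper's is shorter but leans on the cited lemma. The distinction you emphasise --- left multiplication by $s_+$ produces a Cayley path along which $\chi$ increases, while right multiplication preserves edges and shifts $\chi$ uniformly --- is exactly what makes the direct argument go through, and each step (existence of $s_+$ from $\chi\neq 0$ and $S=S^{-1}$, the bound $\chi(g)=m_0c\ge t$, the verification that $R_g$ maps the hypothesis path into $C_{\chi\ge 0}(\Gamma,S)$, and the final concatenation $\gamma\sim g\gamma\sim e$) checks out.
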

\begin{proof}
We will show that $C_{\chi \ge -t}(\Gamma, S)$ is connected. The conclusion then follows from Lemma \ref{equivalence}.

Pick an element $\gamma \in C_{\chi\ge -t}(\Gamma, S)$. Since $S$ is assumed to be symmetric, one can find $s \in S$ with $\chi(s) > 0$. If $n$ is sufficiently big, $\gamma':= s^n\gamma$ lies in $C_{\chi}(\Gamma, S)$. Clearly, $\gamma'$ can be connected to $\gamma$ by a path inside $C_{\chi \ge -t}(\Gamma, S)$. Such a path goes through the vertices
\[
\gamma_0=\gamma;  \ \gamma_1=s\gamma; \ \ldots; \ \gamma_n=s^n\gamma=\gamma'.
\]
By the assumption, one can connect $\gamma'$ with the unity $e \in C_{\chi}(\Gamma, S)$ by a path inside $C_{\chi \ge -t}(\Gamma, S)$. We deduce that $\gamma$ is the same connected component of $C_{\chi \ge -t}(\Gamma, S)$ as $e$. Therefore, $C_{\chi \ge -t}(\Gamma, S)$ is connected.
\end{proof}

We will need the following algebraic result, see \cite[Proposition 11.8]{Py}.

\begin{prop}\label{bns pullback}
Let $\Gamma$ and $Q$ be finitely generated groups and $\phi \colon \Gamma \to Q$ a surjective homomorphism. Let $\chi \in H^1(Q, \R)$. The following holds:
\begin{itemize}
\item[(i)] If $[\phi^*\chi]\in \Sigma(\Gamma)$ then $[\chi] \in \Sigma(Q)$;
\item[(ii)] if $\ker \phi$ is finitely generated and $[\chi] \in \Sigma(Q)$ then $[\phi^*\chi]\in \Sigma(\Gamma)$.
\end{itemize}
\end{prop}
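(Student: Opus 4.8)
The plan is to argue entirely inside Cayley graphs, using Lemma~\ref{equivalence} to move freely between finite symmetric generating sets. Throughout I set $\psi := \phi^*\chi$, so that $\psi(\gamma) = \chi(\phi(\gamma))$ for all $\gamma \in \Gamma$; since $\phi$ is surjective the map $\phi^* \colon H^1(Q,\R) \to H^1(\Gamma,\R)$ is injective, so $\psi$ is nonzero exactly when $\chi$ is, and the rays $[\psi]$, $[\chi]$ are well defined on the respective spheres.

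For (i) I would fix a finite symmetric generating set $S$ of $\Gamma$ and put $T := \phi(S)\setminus\{e\}$, which is a finite symmetric generating set of $Q$. The homomorphism $\phi$ induces a simplicial map $C(\Gamma,S) \to C(Q,T)$, $\gamma \mapsto \phi(\gamma)$, sending an edge $\{\gamma,s\gamma\}$ to the edge $\{\phi(\gamma),\phi(s)\phi(\gamma)\}$ or, when $\phi(s)=e$, to a single vertex. Because $\psi = \chi\circ\phi$, this map carries $C_{\psi\ge 0}(\Gamma,S)$ into $C_{\chi\ge 0}(Q,T)$, and surjectivity of $\phi$ shows that every vertex of $C_{\chi\ge 0}(Q,T)$ is hit. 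So, assuming $[\psi]\in\Sigma(\Gamma)$, I pass to the generating set $S$ via Lemma~\ref{equivalence}(ii), lift any two vertices of $C_{\chi\ge 0}(Q,T)$ to vertices of $C_{\psi\ge 0}(\Gamma,S)$, join them there by a path, and push the path forward to a walk between the two original vertices. Hence $C_{\chi\ge 0}(Q,T)$ is connected and $[\chi]\in\Sigma(Q)$.

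For (ii) the idea is to choose a generating set of $\Gamma$ adapted to the extension $1\to K\to\Gamma\to Q\to 1$ with $K := \ker\phi$, and this is precisely where the hypothesis that $K$ is finitely generated is used. I would take a finite symmetric generating set $S_K$ of $K$, a finite symmetric generating set $T$ of $Q$, a lift $\widetilde t\in\Gamma$ of each $t\in T$, and symmetrise the set $\widetilde T$ of these lifts; then $S := S_K\cup\widetilde T$ is a finite symmetric generating set of $\Gamma$, and $\psi$ vanishes on $K$. Given a vertex $\gamma$ of $C_{\psi\ge 0}(\Gamma,S)$, put $q := \phi(\gamma)$, so $\chi(q)\ge 0$; since $[\chi]\in\Sigma(Q)$ there is a walk $q=q_0,q_1,\dots,q_m=e$ in $C_{\chi\ge 0}(Q,T)$ with $q_{i+1}=t_iq_i$, $t_i\in T$, and the step-by-step lift $\gamma_{i+1} := \widetilde{t_i}\,\gamma_i$ (with $\gamma_0=\gamma$) is a walk in $C(\Gamma,S)$ whose vertices satisfy $\psi(\gamma_i)=\chi(q_i)\ge 0$, ending at $\gamma_m\in K$. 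Writing $\gamma_m = s_l s_{l-1}\cdots s_1$ as a word in $S_K$, the walk $e,\ s_1,\ s_2 s_1,\ \dots,\ s_l\cdots s_1=\gamma_m$ stays inside $K$, hence inside $C_{\psi\ge 0}(\Gamma,S)$, and connects $e$ to $\gamma_m$. Concatenating the two walks joins $\gamma$ to $e$ within $C_{\psi\ge 0}(\Gamma,S)$; as $\gamma$ was arbitrary, $C_{\psi\ge 0}(\Gamma,S)$ is connected and $[\phi^*\chi]\in\Sigma(\Gamma)$.

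This is mostly word-combinatorics in Cayley graphs, so I do not expect a real obstacle; the only genuinely load-bearing point is the role of finite generation of $\ker\phi$ in (ii) — without it $S_K$, and hence $S$, need not be finite and $\Sigma(\Gamma)$ is not even defined. I would be mildly careful about the degenerate edges in (i) coming from generators that map to $e\in Q$, and about symmetrising the set of lifts $\widetilde T$ in (ii) (replace it by $\widetilde T\cup\widetilde T^{-1}$ if necessary), but neither causes trouble. I also note that the construction in (ii) never leaves the half-space $\{\psi\ge 0\}$, so Proposition~\ref{step back} is not needed here.
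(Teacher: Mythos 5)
Your argument is correct. The paper gives no proof of Proposition \ref{bns pullback} at all --- it simply cites \cite{Py}, Proposition 11.8 --- so there is nothing internal to compare against; your Cayley-graph argument (for (i), pushing paths forward along the induced map $C(\Gamma,S)\to C(Q,T)$ with $T=\phi(S)\setminus\{e\}$, and for (ii), lifting paths through a generating set $S_K\cup\widetilde{T}$ adapted to the extension $1\to\ker\phi\to\Gamma\to Q\to 1$ and closing up inside the finitely generated kernel) is the standard proof of the cited result, your appeals to Lemma \ref{equivalence}(ii) are legitimate, and your side remarks (degenerate edges, symmetrising the lifts, no need for Proposition \ref{step back}) are accurate.
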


The BNS sets are also closely related to the finitely generatedness property of kernels of additive characters. Namely, if $\chi \colon \Gamma \to \R$ is a character with finitely generated kernel, then $[\chi] \in \Sigma(\Gamma)$. In fact, a stronger statement holds:

\begin{thrm}[Bieri-Strebel, \cite{BS}]\label{kernel}
Let $\Gamma$ be a finitely generated group and  $\chi \colon \Gamma \to \R$ be an additive character. Then the following conditions are equivalent:
\begin{itemize}
\item $\ker \chi$ is finitely generated;
\item the ray $[\phi]$ is in $\Sigma(\Gamma)$ for any $0 \neq \phi \in H^1(\Gamma, \R)$ with $\ker \chi \subseteq \ker \phi$.
\end{itemize}
\end{thrm}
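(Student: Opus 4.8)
The plan is to treat the two implications separately, after the following reduction. Write $N:=\ker\chi$, so that $\chi$ induces an isomorphism $\Gamma/N\xrightarrow{\ \sim\ }\operatorname{im}\chi$; since $\Gamma$ is finitely generated, $\operatorname{im}\chi$ is a finitely generated torsion-free abelian group, hence $\Gamma/N\cong\Z^k$ for some $k\ge1$, and the characters $\phi$ with $N\subseteq\ker\phi$ are exactly those factoring through $q\colon\Gamma\twoheadrightarrow\Gamma/N$. For $k=1$ the only such nonzero $\phi$ are $\pm\chi$ up to scaling, so the statement is a repackaging of the Bieri--Strebel finiteness criterion; for $k\ge2$ the forward implication additionally claims that a whole sphere $\mathbb{S}^{k-1}$ lands in $\Sigma(\Gamma)$.

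For the forward implication ($N$ finitely generated $\Rightarrow$ every such ray is in $\Sigma(\Gamma)$) I would argue directly with the Cayley graph. Choose $t_1,\dots,t_k\in\Gamma$ mapping to a $\Z$-basis of $\Gamma/N$ and generators $n_1,\dots,n_m$ of $N$, and set $S:=\{t_i^{\pm1}\}\cup\{n_j^{\pm1}\}$, a finite symmetric generating set. Given $0\ne\phi=\bar\phi\circ q$, I would show that every $\gamma$ with $\phi(\gamma)\ge0$ is joined to $e$ inside $C_{\phi\ge0}(\Gamma,S)$, which by Definition \ref{bns def} gives $[\phi]\in\Sigma(\Gamma)$. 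Writing $\gamma=w\cdot n$ with $w:=t_1^{a_1}\cdots t_k^{a_k}$ (so $q(\gamma)=(a_1,\dots,a_k)$), $n\in N$, and noting that $\phi\equiv0$ on $N$ and $\phi(w)=\phi(\gamma)$, the path is built in three legs: (1) from $e$ to some $g$ with $q(g)=q(w)$, by left-multiplying by the letters of the word $t_1^{a_1}\cdots t_k^{a_k}$ reordered so that the letters of nonnegative $\phi$-value come first --- then the $\phi$-values first increase from $0$, then decrease monotonically down to $\phi(\gamma)\ge0$, so stay $\ge0$; (2) from $g$ to $w$ through vertices of $Ng$, using a word for $wg^{-1}\in N$ in the $n_j^{\pm1}$, along which $\phi\equiv\phi(\gamma)\ge0$; (3) from $w$ to $wn$ through vertices of $Nw$, likewise using a word for $wnw^{-1}\in N$. (Proposition \ref{step back} is available if one prefers to only reach $C_{\phi\ge-t}$, but it is not needed.)

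For the reverse implication, applying the hypothesis to $\phi=\pm\chi$ gives $[\chi],[-\chi]\in\Sigma(\Gamma)$, and I would then invoke the finiteness theorem of Bieri--Strebel \cite{BS}. Its proof, which I would recall rather than reprove, runs as follows for discrete $\chi\colon\Gamma\twoheadrightarrow\Z$: the $\Sigma$-criterion turns $[\chi]\in\Sigma(\Gamma)$ into an ascending HNN presentation $\Gamma=\langle B,u\mid u^{-1}bu=\psi(b)\rangle$ with $B\le N$ finitely generated and $\chi(u)=1$, so that $N=\bigcup_{p\ge0}u^pBu^{-p}$ (an increasing union); applying it to $-\chi$ with the same stable letter $u$ produces a finitely generated $\widetilde B\le N$ with $N=\bigcup_{p\ge0}u^{-p}\widetilde Bu^p$ (also increasing); since $\widetilde B$ is finitely generated it lies in some $u^{p_0}Bu^{-p_0}$, and the containments $u^{-1}Bu\subseteq B\subseteq uBu^{-1}$ then force $u^{-p}\widetilde Bu^p\subseteq u^{p_0}Bu^{-p_0}$ for all $p\ge0$, whence $N=u^{p_0}Bu^{-p_0}$ is finitely generated. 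For non-discrete $\chi$ one replaces this by the valuation-theoretic form of the $\Sigma$-criterion, carried out in \cite{BS}.

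I expect the only real obstacle to be the $\Sigma$-criterion invoked in the reverse implication --- manufacturing, from the bare connectivity of $C_{\chi\ge0}(\Gamma,S)$, an HNN decomposition of $\Gamma$ with a \emph{finitely generated} base --- together with its extension to non-discrete characters; this is the technical heart of \cite{BS} and is what I would import. The forward implication is elementary and self-contained, and in the reverse implication the reduction to $[\pm\chi]$ and the final absorption argument are routine.
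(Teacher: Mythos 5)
The paper does not actually prove this statement: it is imported as a citation from Bieri--Strebel \cite{BS}, so the only question is whether your argument stands on its own. Your forward implication does. Since $\phi$ is constant on cosets of $N=\ker\chi$ and your reordering of the letters of $t_1^{a_1}\cdots t_k^{a_k}$ keeps every partial $\phi$-sum at or above $\min(0,\phi(\gamma))=0$, the three-leg path stays in $C_{\phi\ge 0}(\Gamma,S)$, and Definition \ref{bns def} (with Lemma \ref{equivalence} for independence of $S$) gives $[\phi]\in\Sigma(\Gamma)$ for every nonzero $\phi$ vanishing on $N$. That part is correct and self-contained.

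The reverse implication has a genuine gap. You use the hypothesis only at the two rays $[\pm\chi]$ and then invoke a criterion of the shape ``$[\chi],[-\chi]\in\Sigma(\Gamma)$ implies $\ker\chi$ finitely generated''. That criterion is valid only when $\operatorname{im}\chi$ is infinite cyclic; when $\operatorname{rank}(\operatorname{im}\chi)\ge 2$ it is false, and no ``valuation-theoretic form'' of the rank-one HNN argument can repair it, because the two rays simply do not carry enough information. Concretely, take $\Gamma=F_2\times F_2=\langle a,b\rangle\times\langle c,d\rangle$ and $\chi(a)=\chi(b)=1$, $\chi(c)=\chi(d)=\sqrt{2}$. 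Since $\chi$ is nonzero on both factors, $[\pm\chi]\in\Sigma(\Gamma)$: it is standard (and provable by exactly the kind of pumping argument you use in the forward direction) that for a direct product of finitely generated groups every character which is nonzero on both factors lies in $\Sigma$. Yet $\ker\chi$ is not finitely generated: as $1$ and $\sqrt{2}$ are $\Q$-independent, $(g_1,g_2)\in\ker\chi$ forces both exponent sums to vanish, so the projection of $\ker\chi$ to the second factor is the kernel of $c,d\mapsto 1\in\Z$, an infinitely generated normal subgroup of infinite index in $F_2$; a finitely generated group cannot surject onto it. Hence for non-discrete $\chi$ you must use the full hypothesis --- that \emph{every} nonzero character vanishing on $\ker\chi$ lies in $\Sigma(\Gamma)$ --- and the implication ``this whole subsphere lies in $\Sigma(\Gamma)$ $\Rightarrow$ $\ker\chi$ is finitely generated'' is precisely the nontrivial content of the cited theorem (Theorem B1 of \cite{BNS}, resp.\ \cite{BS}); it is not a formal consequence of the rank-one absorption argument you sketch. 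As written, your reverse direction either rests on a false statement or collapses into citing the very theorem to be proved, which is all the paper does in the first place.
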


\subsection{A geometric characterisation in the compact case}\label{bns geometric}
Let $X$ be a $\mathcal{C}^{\infty}$-manifold and $\Gamma=\pi_1(X)$. Let $\pi \colon \tX \to X$ be its universal cover. It factorises as 
\[
\tX \xrightarrow{\widetilde{\pi}} \widehat{X} \xrightarrow{\widehat{\pi}} X,
\]
where $\widehat{X}$ is the maximal abelian cover of $X$. 

Let $\chi \in H^1(X, \R)=H^1(\Gamma, \R)$ be a nonzero class. Represent it by a closed 1-form $\alpha$ on $X$. The form $\widehat{\alpha}:=\widehat{\pi}^*\alpha$ is exact and can be written as $\widehat{\alpha}=dh$ for some function $h \colon \widehat{X} \to \R$.

Assume further that $X$ is compact. There exists a unique connected component of $h^{-1}([0;\infty[)$ on which $h$ is unbounded (\cite[Lemma 11.10]{Py}; \cite[Lemma 5.2]{BNS}).
Denote this component by $\widehat{X}_h$.

\begin{thrm}[Bieri-Neumann-Strebel, \cite{BNS} Theorem G]\label{topological bns}
Let $X$ be a compact manifold. The following conditions are equivalent:
\begin{itemize}
\item[(i)] $[\chi] \in \Sigma(\Gamma)$;
\item[(ii)] $\pi_1(\widehat{X}_h) \to \pi_1(\widehat{X})$ is surjective;
\item[(iii)] $\widetilde{X}_h:=\widetilde{\pi}^{-1}(\widehat{X}_h)$ is connected.
\end{itemize}
\end{thrm}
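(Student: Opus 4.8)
The plan is to follow the strategy of Bieri-Neumann-Strebel: translate the combinatorial definition of $\Sigma(\Gamma)$ in terms of the Cayley graph into the geometry of the maximal abelian cover $\widehat{X}$, and then relate connectedness of subgraphs to $\pi_1$-surjectivity statements. First I would fix a finite CW-structure on $X$ (possible since $X$ is compact) whose $1$-skeleton, once we pick a lift to $\widehat{X}$, carries the combinatorics of a Cayley-type graph for $\pi_1(\widehat{X})$; integrating $\widehat{\alpha}=dh$ along edges gives a quasi-isometry-type comparison between the level sets of $h$ and the subgraphs $C_{\chi \ge a}$. The cocompactness of the $\Gamma$-action on $\widetilde X$ (equivalently of the $H_1(X,\Z)$-action on $\widehat X$) means that $h$ changes by a bounded amount across any single cell, so $\widehat{X}_{h}$ (the unique component of $h^{-1}([0,\infty[)$ on which $h$ is unbounded) is, up to bounded Hausdorff distance, the ``positive half'' of $\widehat X$ in the direction of $\chi$.

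The key steps, in order, would be: (1) Establish the dictionary between the graph $C_\chi(\Gamma,S)$ and a neighbourhood of $h^{-1}([0,\infty[)$ in $\widehat X$: choosing $S$ to come from the $1$-cells of $X$, a vertex $\gamma$ with $\chi(\gamma)\ge 0$ corresponds to a deck translate of the basepoint lying (up to a universal constant) in $h^{-1}([-c,\infty[)$, and edges correspond to paths of bounded $h$-oscillation. (2) Use Proposition \ref{step back} together with Lemma \ref{equivalence}(iii): connectedness of $C_\chi(\Gamma,S)$ is insensitive to shifting the threshold by a bounded constant, so it is equivalent to connectedness of the relevant part of $\widehat X_h$ on the level of $\pi_0$. (3) Translate $\pi_0$ of the cover into $\pi_1$ of the base: the subgraph being connected says exactly that any loop in $\widehat X$ can be pushed, via the deck group, into $\widehat X_h$, which is the statement that $\pi_1(\widehat X_h)\to\pi_1(\widehat X)$ is surjective — this gives (i)$\Leftrightarrow$(ii). (4) For (ii)$\Leftrightarrow$(iii): the further cover $\widetilde{\pi}\colon \widetilde X\to\widehat X$ has deck group $\pi_1(\widehat X)$, and $\widetilde X_h=\widetilde\pi^{-1}(\widehat X_h)$ is connected if and only if $\pi_1(\widehat X_h)$ surjects onto this deck group, by the standard correspondence between connected components of a pullback cover and the coset space of the image of $\pi_1$; since $\widehat X$ is connected this is precisely condition (ii).

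I would treat step (1) carefully but without belabouring it, citing \cite{BNS} Theorem G and \cite{Py} Lemma 11.10 / Lemma 11.4 for the combinatorial-to-geometric translation, since the statement is attributed to Bieri-Neumann-Strebel. The one genuine subtlety — and the step I expect to be the main obstacle to a clean self-contained write-up — is the existence and uniqueness of the unbounded component $\widehat X_h$ and the fact that it behaves well under bounded perturbation of the threshold; this is where compactness of $X$ is used in an essential way (it fails, or must be replaced by the property $\beta$, in the non-compact setting treated later in the paper). Concretely, one must show that $h^{-1}([0,\infty[)$ has exactly one component on which $h\to+\infty$, and that a deck translate moving the basepoint into $h\ge 0$ can always be connected back to the basepoint region within $h\ge -t$ for a uniform $t$; both follow from cocompactness of the $\Gamma$-action and the fact that $\chi$, being a nonzero homomorphism to $\R$, has a translate moving $h$ by a definite positive amount. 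Granting these, (i)$\Rightarrow$(ii)$\Rightarrow$(iii) and the reverse implications are formal manipulations with covering space theory and the characterisations in Lemma \ref{equivalence} and Proposition \ref{step back}.
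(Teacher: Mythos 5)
The paper does not actually prove this statement: it is quoted from \cite{BNS} (Theorem G), and the only justification given in the text is the remark that (ii)$\Leftrightarrow$(iii) is elementary covering-space theory --- which is exactly your step (4), the identification of the components of $\widetilde{\pi}^{-1}(\widehat{X}_h)$ with the cosets of $\im[\pi_1(\widehat{X}_h)\to\pi_1(\widehat{X})]$. Your sketch of the substantive equivalence is the standard Bieri--Neumann--Strebel route, and one direction of it is essentially what the paper later proves in greater generality as Lemma \ref{topological bns unbounded}: there the bounded oscillation of $h$ over a fundamental domain (automatic here by compactness, abstracted later into the property $\beta$) together with the automorphicity $h(\gamma\cdot x)=h(x)+\chi(\gamma)$ and Proposition \ref{step back} yields ``$\widetilde{X}_h$ connected $\Rightarrow [\chi]\in\Sigma(\Gamma)$''; so your plan is consistent with, and effectively anticipates, the paper's own machinery. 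One organizational slip: in your step (3) you pass directly from connectedness of $C_\chi(\Gamma,S)$ to surjectivity of $\pi_1(\widehat{X}_h)\to\pi_1(\widehat{X})$ by ``pushing loops via the deck group''; that is not the right mechanism (the deck group of $\widehat{X}\to X$ only acts on $\pi_1(\widehat{X})$ by conjugation, which is not the same as homotoping a loop into $\widehat{X}_h$), and the clean route is the one you already set up: compare the Cayley graph with $\widetilde{X}_h$ inside the universal cover to get (i)$\Leftrightarrow$(iii), then deduce (ii) from (iii) by step (4). Finally, the direction (i)$\Rightarrow$(iii) does require the uniqueness of the unbounded component and the boundedness of the other components of $h^{-1}([0;\infty[)$ --- the subtlety you flag and cite (\cite{Py}, Lemma 11.10; \cite{BNS}, Lemma 5.2) rather than prove; since the paper itself cites the whole theorem, that level of detail is acceptable here.
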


Note that the equivalence of \textit{(ii)} and \textit{(iii)} is elementary since the connected components of $\tX_h$ are in a canonical bijection with the coset
\[
\pi_1(\widehat{X}) \big /\operatorname{im}[\pi_1(\widehat{X}_h) \to \pi_1(\widehat{X})].
\]
At the same time, condition \textit{(i)} can be viewed, in the spirit of Milnor-\v{S}varc Lemma, as a discrete version of the condition \textit{(iii)}. 

\subsection{Fundamental domains}\label{domains}

In order to proceed, we need to make an interlude on fundamental domains.

The accepted terminology contains a certain discrepancy, so let us specify what we mean by a \emph{fundamental 
domain}. 

\begin{df}
Let $\Gamma$ be a finitely generated group acting freely and properly discontinuously  on a topological space $\widetilde{Y}$. Let  $Y:=\widetilde{Y}/\Gamma$ and $\pi \colon \widetilde{Y} \to Y$ be the natural projection. We say that an open subset $\Phi \subset \widetilde{Y}$ is a \emph{$\Gamma$-pre-fundamental domain} if the following conditions are satisfied:
\begin{itemize}
\item $\Phi$ is a domain, that is $(\overline{\Phi})^{\circ}=\Phi$;
\item $\bigcup_{\gamma \in \Gamma} \gamma \cdot \overline{\Phi}=\widetilde{Y}$;
\item $\gamma \Phi \cap \Phi = \varnothing$ for any non-trivial element $\gamma \in \Gamma$;
\end{itemize}
(here by $\overline{\Phi}$ we denote the topological closure and by $\overline{\Phi}^{\circ}$ its interior).

A $\Gamma$-pre-fundamental domain $\Phi$ is a \emph{$\Gamma$-fundamental domain} if its moreover satisfies the \emph{local finiteness assumption}:
\begin{itemize}
\item the collection of sets $\{\gamma \cdot \overline{\Phi}\}$ is locally finite; equivalently, for every compact  $K \subset \widetilde{Y}$ the set $\{\gamma \in \Gamma \ | \ \gamma \cdot \Phi \cap K \neq \varnothing\}$ is finite.
\end{itemize}
\end{df}

Under reasonable assumptions on $\widetilde{Y}$ a fundamental domain always exists (\cite[Theorem 25]{Kap23}). For example, this is the case when $Y$ is a manifold, $\widetilde{Y}$ its universal cover, and $\Gamma$ is the fundamental group of $Y$ acting on $\widetilde{Y}$ by deck transformations.

For a pre-fundamental domain $\Phi$ we denote by $S_{\Phi}$ the set of non-trivial $\gamma \in \Gamma$ such that the translate $\gamma \cdot \Phi$ is \emph{adjacent} to $\Phi$:
\[ S_{\Phi}:=\{\gamma \in \Gamma \setminus \{e\} \ | \gamma \cdot \overline{\Phi} \cap \overline{\Phi} \neq \varnothing\}.
\]

\begin{df}
A $\Gamma$-pre-fundamental domain is a \emph{geometrically finite}  if $S_{\Phi}$ is finite.
\end{df}

Geometric finiteness implies local finiteness, i.e. if a pre-fundamental domain is geometrically finite, it is automatically a fundamental domain. The opposite is false (see e.g. \cite{Green}).

If $\Phi$ is a $\Gamma$-pre-fundamental domain,  the set $S_{\Phi} \subseteq \Gamma$ is symmetric and generates the group $\Gamma$. 
In particular, if $\Phi$ is geometrically finite, one can construct a Cayley graph $C(\Gamma, S_{\Phi})$.
Observe  that $S_{\Phi}=S_{\gamma \cdot \Phi}$ for any $\gamma \in \Gamma$.

The following proposition is well-known, yet it is quite hard to locate it in the literature. It is essentially contained behind the lines in the classical textbook \cite{ST}. We outline the proof below.

\begin{prop}\label{tame domains}
Let $X$ be a connected manifold with universal cover $\pi \colon \tX \to X$. Assume that $X$ admits a finite triangulation (we do not assume $X$ is compact). Then there exists a geometrically finite $\pi_1(X)$-fundamental domain $\Phi \subset \tX$ such that $\pi(\Phi)$ is a union of facets.
\end{prop}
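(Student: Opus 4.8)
The plan is to build the fundamental domain directly from a triangulation of $X$, lifted to $\tX$. First I would fix a finite triangulation $K$ of $X$ and consider the induced (generally infinite, but locally finite) triangulation $\tK$ of the universal cover $\tX$, on which $\Gamma=\pi_1(X)$ acts simplicially, freely and properly discontinuously, permuting the simplices and inducing a bijection $\tK/\Gamma \cong K$. The naive idea---pick one lift of each top-dimensional simplex of $K$ and take the union of their open stars or of the simplices themselves---does not immediately give a \emph{domain} (an open set equal to the interior of its closure) nor a connected one, so some care is needed. The cleanest route is to pass to the first barycentric subdivision $\tK'$ and use the \emph{closed stars of vertices}: choose, for the unique vertex orbit representatives, a set-theoretic section, but that still over- or under-counts. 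Instead I would proceed as follows.

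Choose a spanning tree $T$ in the $1$-skeleton of the barycentric subdivision $K'$ of $X$ (or rather work with a maximal tree in a suitable dual structure). The standard construction is: take the dual cell structure, or more elementarily, enumerate the closed top-simplices $\sigma_1,\dots,\sigma_N$ of $K$ and lift them one at a time to $\tX$, at each step choosing the lift $\tilde\sigma_i$ so that it shares a codimension-one face with one of the previously chosen $\tilde\sigma_j$ (possible because $X$ is connected, so the dual graph of $K$ is connected, and we can order the simplices along a spanning tree of that dual graph). Let $\Phi_0 := \bigcup_i \tilde\sigma_i$ and set $\Phi := (\overline{\Phi_0})^{\circ}$, the interior of this closed set. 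By construction $\Phi$ is a domain; $\pi(\overline{\Phi}) = X$ gives $\bigcup_{\gamma}\gamma\overline{\Phi}=\tX$; and because each top-simplex of $K$ has exactly one lift inside $\Phi_0$, the translates $\gamma\Phi$ for $\gamma\neq e$ meet $\Phi$ only along lower-dimensional faces, hence $\gamma\Phi\cap\Phi=\varnothing$. Finally $\pi(\Phi)$ is a union of (open) facets by construction. For geometric finiteness: the set $S_\Phi$ of $\gamma\neq e$ with $\gamma\overline{\Phi}\cap\overline{\Phi}\neq\varnothing$ is finite because $\overline{\Phi}$ is a \emph{finite} union of closed simplices of the locally finite complex $\tK$, so only finitely many simplices of $\tK$ touch $\overline{\Phi}$, and each lies in only finitely many translates $\gamma\overline{\Phi}$ (again since $\Phi_0$ is a finite union of simplices and the action is free and simplicial); alternatively, $S_\Phi$ injects into the finite set of pairs (simplex of $\overline{\Phi}$, adjacent simplex of $\tK$).

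The step I expect to be the main obstacle is verifying that $\Phi=(\overline{\Phi_0})^{\circ}$ is actually \emph{connected} and, relatedly, that taking the interior of the closure does not destroy the disjointness property $\gamma\Phi\cap\Phi=\varnothing$ or accidentally enlarge $\pi(\Phi)$ beyond a union of facets. Connectivity is where the "choose each new lift adjacent to a previous one" condition is essential: it guarantees that $\Phi_0$ is a connected union of $n$-simplices glued along $(n-1)$-faces, so its interior $\Phi$ is connected (an $n$-simplex glued to another along a common facet has connected interior-of-union, and one induces along the spanning tree). The subtlety with $(\overline{\Phi_0})^\circ$ is that a priori its closure could pick up extra boundary faces and its interior could then fail to avoid translates; one resolves this by noting that $\overline{\Phi}=\overline{\Phi_0}$ is exactly the union of the $N$ chosen closed simplices (no new simplices appear since $\Phi_0$ is already closed as a finite union of closed simplices), and a point of $\tX$ lies in $\Phi$ iff it lies in the union of the \emph{open} stars restricted to these simplices, i.e. iff every simplex of $\tK$ containing it is one of the chosen $\tilde\sigma_i$ or a face thereof. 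From this description both the disjointness under $\Gamma$ and the "union of facets" property for $\pi(\Phi)$ follow by the one-lift-per-orbit bookkeeping.

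I would organise the write-up as: (1) set up $\tK$, the simplicial $\Gamma$-action, and local finiteness; (2) order the top-simplices along a spanning tree of the dual graph and build $\Phi_0$, $\Phi$; (3) check the three defining properties of a pre-fundamental domain, then (4) check geometric finiteness, then (5) check connectivity and that $\pi(\Phi)$ is a union of facets. Since the excerpt explicitly says this is classical and asks only for an outline "for the sake of completeness," I would keep each verification to a sentence or two, citing \cite{ST} for the underlying combinatorial-topology facts (regular neighbourhoods, simplicial approximation, local finiteness of lifted complexes).
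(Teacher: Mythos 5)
Your proposal is correct and is essentially the paper's own argument: both proofs choose a spanning tree of the dual graph of the finite triangulation and take $\Phi$ to be the interior of the closure of the corresponding union of lifted top-dimensional facets, with the same verifications (interior criterion for the union of closed $n$-simplices, one-lift-per-orbit plus freeness for $\gamma\Phi\cap\Phi=\varnothing$, compactness of $\overline{\Phi}$ and local finiteness of the lifted triangulation for geometric finiteness). The only cosmetic difference is that the paper first builds the dense open contractible union $U$ of $n$- and $(n-1)$-facets downstairs and lifts it in one piece via its simple connectivity, whereas you lift the top simplices one at a time along the tree; the resulting domains are the same.
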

\begin{proof}
Let $n=\dim X$.  Denote the finite triangulation by $P_{\bullet}$ (that is, $P_i$ denote the set of simplices of dimension $i$ and $P_{\le i}:=\bigcup_{j \le i} P_j$). The triangulation $P_{\bullet}$ lifts to a triangulation $\widetilde{P_{\bullet}}$ of $\tX$. The map $\pi$ and the action of $\pi_1(X)$ are simplicial. 
\\

For simplicity, we divide the proof into several steps. We construct $\Phi$ in \textbf{Steps 1} and \textbf{2}, check that $\pi(\Phi)$ is a union of facets in \textbf{Step 3} and show that it is a geometrically finite fundamental domain in \textbf{Steps 4} and \textbf{5}.
\\

\textbf{Step 1.} \textit{Construct a contractible dense open simplicial subset $U \subseteq X$}. We claim that there exists a subset $U \subset X$ such that:
\begin{itemize}
\item[(i)] $U$ is open and dense in $X$;
\item[(ii)] $U$ is contractible;
\item[(iii)] $U$ is obtained as a union of facets of dimensions $n$ and $(n-1)$.
\end{itemize}

To construct such an $U$, take the dual complex $P^*_{\bullet}$ and consider its $1$-skeleton $P^*_{\le 1}$. This is a finite graph. It is connected because $X$ is a connected topological manifold. One can choose a spanning tree $\tau \subset P^*_{\le 1}$. Recall that by  definition, this is a connected tree containing all vertices of $P^*_{\le 1}$. It corresponds to a certain subset $\tau^*\subset P_{n-1} \cup P_{n}$, and $P_n$ is contained in   $\tau^*$. Let $U:= \bigcup_{\Delta \in \tau^*} \Delta$ be the union of all the facets belonging to $\tau^*$. 
\\

\textbf{Step 2.} \textit{Check that $U$ satisfies the desirable properties and construct $\Phi$}. The set $U$ is dense in $X$ since it contains all the facets of maximal dimension and $X$ is a connected manifold. 

The set $U$ is open because it is obtained from the open set $X_{\ge n-1}=X \setminus X_{\le n-2}$ by throwing out a finite collection of closed subsets, namely some facets of dimension $(n-1)$.

It is left to show that $U$ is contractible. This follows from the observation that collapsing an edge in the tree $\tau$ does not change the homotopy type of the geometric realisation of the dual complex $\tau^*$.

In particular, the set $U$ is simply connected and one can choose a section  
\[
s \colon U \xrightarrow{\sim} V \subset \tX,
\]
where $V$ is a connected component of $\pi^{-1}(U)$. Let $\Phi:=(\overline{V})^{\circ}$. We claim that $\Phi$ is a required domain.
\\

\textbf{Step 3.} \textit{$\pi(\Phi)$ is the union of facets}. First of all, $V$ is a union of $\widetilde{P_{\bullet}}$- facets because $U$ is a union of $P_{\bullet}$-facets (recall that $\widetilde{P_{\bullet}}$ is the lift of the triangulation $P$ on $\tX$ and the map $\pi$ is simplicial with respect to it). Closures and interiors of unions of facets are unions of facets, thus $\Phi$ is a union of $\widetilde{P_{\bullet}}$-facets as well. Hence $\pi(\Phi)$ is a union of $P_{\bullet}$-facets
\\

\textbf{Step 4.} \textit{ $\Phi$ is a pre-fundamental domain}. By construction, $\Phi$ is a domain. 

Observe that $\pi(\Phi)$ contains $U$ which is dense in $X$. Hence $\overline{\pi(\Phi)}=\pi(\overline{\Phi})=X$ and $\overline{\Phi}$ intersects every orbit in $\tX$. Consequently, $\tX = \bigcup_{\gamma \in \pi_1(X)} \gamma \cdot \overline{\Phi}$. 

Finally, since $V$ is a lift of a simply connected subset in $X$, one has $\gamma V \cap V = \varnothing$ unless $\gamma$ is trivial. This means that for every $\gamma \in \Gamma \setminus \{e\}$ the intersection $\gamma \cdot \Phi \cap  \Phi$ is contained in  $\Phi \setminus V$. At the same time, both $\Phi$ and $\gamma \cdot \Phi$ are open, hence $\gamma \Phi \cap \Phi$ is open in $\Phi$. This contradicts the fact that $V$ is dense in $\Phi$. We deduce that $\gamma \cdot \Phi \cap \Phi = \varnothing$. 
\\

\textbf{Step 5.}\textit{ $\Phi$ is geometrically finite}. The $\pi_1(X)$-action  on $\tX$ is simplicial with respect to the triangulation $\widetilde{P_{\bullet}}$, thus $\gamma \Phi \subset \tX$ is again a union of $\widetilde{P_{\bullet}}$-facets for each $\gamma \in \pi_1(X)$. At the same time, the triangulation $P_{\bullet}$ is finite, therefore $\di \Phi$ consists of finite number of facets. The triangulation $\widetilde{P_{\bullet}}$ is locally finite, i.e. there exists only finitely many $\widetilde{P_{\bullet}}$-facets adjacent to $\Phi$. Hence only finitely many translates of $\Phi$ are adjacent to $\Phi$ and $S_{\Phi}$ is finite.
\end{proof}

\begin{cor}\label{tame exists}
Let $X$ be a smooth algebraic variety over either $\C$ or $\R$. Let $\pi \colon \tX \to X$ be its universal cover. There exists a geometrically finite fundamental domain $\Phi \subset \tX$ such that $\pi(\Phi) \subset X$ is a semialgebraic subset.
\end{cor}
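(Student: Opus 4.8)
\textbf{Proof proposal for Corollary \ref{tame exists}.}

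The plan is to reduce to Proposition \ref{tame domains} by equipping $X$ with a semialgebraic triangulation compatible with the deck action on $\tX$. First I would invoke the fact that every smooth (real or complex) algebraic variety admits a finite semialgebraic triangulation: after choosing an affine, hence quasi-projective, embedding, $X$ is a bounded semialgebraic set, and by the triangulation theorem for semialgebraic sets (Łojasiewicz; see e.g.\ Bochnak--Coste--Roy) there is a finite simplicial complex $K$ and a semialgebraic homeomorphism $|K| \to X$. In the complex case one regards $X$ as a real algebraic variety of twice the dimension; this does not affect the argument. Thus the hypothesis of Proposition \ref{tame domains} — that $X$ admit a finite triangulation — is satisfied, with the additional feature that the triangulation map is semialgebraic.

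Next I would run the construction of Proposition \ref{tame domains} verbatim. It produces a geometrically finite $\pi_1(X)$-fundamental domain $\Phi \subset \tX$ with the property that $\pi(\Phi) \subseteq X$ is a union of (open) facets of the given triangulation. The only point to check is that a union of facets of a semialgebraic triangulation is again semialgebraic. Each closed simplex $\overline{\Delta} = |\Delta|$ is the image under the triangulating homeomorphism of a (semialgebraic) affine simplex, hence semialgebraic; its relative interior is obtained from $\overline{\Delta}$ by removing the finitely many proper faces, so it too is semialgebraic; and a finite union of semialgebraic sets is semialgebraic. Therefore $\pi(\Phi)$ is a semialgebraic subset of $X$, which is exactly the assertion.

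The mild subtlety — and the step I expect to require the most care — is the compatibility between the \emph{semialgebraic} structure downstairs and the \emph{combinatorial} output of Proposition \ref{tame domains}: the proof of that proposition chooses a spanning tree in the dual graph and takes $\pi(\Phi)$ to be a union of facets, but it is phrased for abstract finite triangulations, so one must make sure that ``facet'' there means the geometric realisation of a simplex of the fixed semialgebraic triangulation, not of some finer subdivision introduced along the way. Inspecting that proof, no subdivision is performed — the spanning tree is chosen inside the dual complex of the original triangulation and $U$, $D$, $\overline{D}$, $\Phi$ are all unions of simplices of that same complex — so the identification goes through directly. (Alternatively, if one prefers to avoid even this bookkeeping, one can replace the triangulation by a semialgebraic CW or cell decomposition and note that Proposition \ref{tame domains} only uses local finiteness of the lifted cells and simpliciality of the deck action, both of which persist.) Either way, $\Phi$ is the desired geometrically finite fundamental domain with $\pi(\Phi)$ semialgebraic.
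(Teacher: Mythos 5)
Your argument is correct and follows essentially the same route as the paper: reduce the complex case to the real one by restriction of scalars, invoke the finite semialgebraic triangulation of a real algebraic variety (the paper cites Coste, Theorem 3.12), feed it into Proposition \ref{tame domains}, and observe that $\pi(\Phi)$ is a finite union of facets of a semialgebraic triangulation, hence semialgebraic. The only cosmetic caveat is your phrase ``$X$ is a bounded semialgebraic set'' after an affine embedding — boundedness is neither automatic nor needed, since the triangulation theorem applies to the (locally closed) semialgebraic set $X$ directly — but this does not affect the argument.
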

\begin{proof}
By restricting the scalars, it is enough to deal with the real case. Every real algebraic variety admits a finite semialgebraic triangulation (\cite[Theorem 3.12]{Cos}). Take $\Phi$ as the one in the Proposition \ref{tame domains}. Then $\pi(\Phi)$ is a union of facets of a semialgebraic triangulation, hence semialgebraic. 
\end{proof}

\begin{rmk}
A similar argument shows that if $\mathcal{S}$ is an o-minimal structure and $X$ is a $\mathcal{S}$-definable manifold, it admits a geometrically finite fundamental domain whose projection to $X$ is definable.
\end{rmk}

\subsection{Semialgebraic fundamental domains}\label{semialgebraic domains}

\begin{df} 
Let $X$ be an algebraic variety, real or complex, and $\pi \colon \tX \to X$ its universal cover. We say that a $\pi_1(X)$-fundamental domain $\Phi\subset \tX$ is \emph{semialgebraic} if it is geometrically finite and $\pi(\Phi) \subset X$ is a semialgebraic subset.  
\end{df}

Corollary \ref{tame exists} claims that every smooth algebraic variety admits a semialgebraic fundamental domain.

Let $X_1$ and $X_2$ be algebraic varieties. If $\Phi_1 \subset \tX_1$ and $\Phi_2 \subset \tX_2$ are semialgebraic $\pi_1(X_1)$- (respectively $\pi_1(X_2)-$)  fundamental domains, then $\Phi_1 \times \Phi_2 \subset \tX_1 \times \tX_2$ is a semialgebraic $\pi_1(X_1) \times \pi_1(X_2)$-fundamental domain.

\begin{lemma}\label{finiteness of fundamental domains}
Let $X$ and $Y$ be smooth algebraic varieties, complex or real, and $f \colon X \to Y$ an algebraic morphism. Let $\pi_X \colon \tX \to X$ and $\pi_Y \colon \widetilde{Y} \to Y$ be their universal covers and $\widetilde{f} \colon \tX \to \widetilde{Y}$ be a lift of $f$. Let $\Phi \subset \tX$ and $\Psi \subset \widetilde{Y}$ be semialgebraic fundamental domains. Denote $X_0:=\pi_X(\Phi)$ (respectively, $Y_0:=\pi_Y(\Psi)$). 
\[
\xymatrix{
\Phi \ar@{^{(}->}[r] \ar[d] & \tX  \ar[d]_{\pi_X} \ar[r]^{f} & \widetilde{Y} \ar[d]^{\pi_Y}& \Psi \ar@{_{(}->}[l] \ar[d]\\
X_0 \ar@{^{(}->}[r]& X \ar[r]^{f} & Y & Y_0 \ar@{_{(}->}[l]
}
\]
Suppose that $f^{-1}(Y_0) \cap X_0$ is non-empty. Then $\widetilde{f}(\Phi)$ is contained in a finite collection of $\pi_1(Y)$-translations of the closure of $\Psi$.
\end{lemma}
\begin{rmk}
The sets $X_0$ and $Y_0$ are open and dense inside $X$ and $Y$ respectively. The compliment $R:= Y \setminus Y_0$ is a semialgebraic subset of $Y$ of measure zero. The technical condition that $f^{-1}(Y_0) \cap X_0$ is non-empty is equivalent to the condition that $f(X)$ is not contained in $R$, that is, roughly speaking, that the fundamental domains $\Phi$ and $\Psi$ are in a general position with respect to each other. If one constructs these domains via semialgebraic triangulations as in Proposition \ref{tame domains}, this can be always obtained by slightly perturbing the triangulation on $Y$. The statement is still true without this assumption, although the proof becomes slightly more technical.

\end{rmk}
\begin{proof}[Proof of Lemma \ref{finiteness of fundamental domains}]

We denote $R:=Y \setminus Y_0$ and $\widetilde{R}:=\pi_Y^{-1}(R)$. The space $\widetilde{Y}$ decomposes as
\[
\widetilde{Y}= \widetilde{R} \sqcup \bigsqcup_{\gamma \in \pi_1(Y)} \gamma \cdot \Psi,
\]

and $\widetilde{Y} \setminus \widetilde{R}$ is homeomorphic to a disjoint union of copies of $\Psi$ indexed by the elements of $\pi_1(Y)$.

Since $f$ is algebraic and $R \subset Y$ is semialgebraic, $S:=f^{-1}(R) \subset X$ is semialgebraic, and so is $X_0 \setminus S$.  

The map $(\pi_X)|_{\Phi} \colon \Phi \to X_0$ is a homeomorphism. Let $\sigma \colon X_0 \to \Phi \hookrightarrow \widetilde{X}$ be its inverse. One has
\[
\widetilde{f}(\sigma(X_0 \setminus S)) \subseteq \widetilde{Y} \setminus \widetilde{R} = \bigsqcup_{\gamma \in \pi_1(Y)} \gamma \cdot \Psi
\]
Since $X_0 \setminus S$ is a semialgebraic set, it has only finitely many connected components, and there exists a finite collection of elements $\{\gamma_1, \ldots, \gamma_k\} \subseteq \pi_1(Y)$ such that 
\[
\widetilde{f}(\sigma(X_0 \setminus S) \subseteq \bigcup_i \gamma_i \cdot \Psi.
\]
The condition that  $f^{-1}(Y_0) \cap X_0$ is non-empty guarantees that $X_0 \setminus S$ is open and dense in $X_0$. Therefore $\overline{\sigma(X_0\setminus S)}=\Phi$ and 
\[
\widetilde{f}(\Phi) = \widetilde{f}(\overline{\sigma(X_0 \setminus S)}) \subseteq \overline{\bigcup_{\gamma_i} \gamma_i \cdot \Psi} = \bigcup_{i} \gamma_i \cdot \overline{\Psi}.
\]
\end{proof}

\subsection{A non-compact version of Bieri-Neumann-Strebel criterion} \label{bns criterion noncomp}

In this subsection, we prove a version of Theorem \ref{topological bns} in the case of $X$ is not compact.
We start with the following definition.

\begin{df}\label{taming pair def}
Let $X$ be a smooth manifold and $\chi \in H^1(X,\R)$ a nonzero class. Let $\pi \colon \tX \to X$ be the universal cover of $X$. A \emph{taming pair for the class $\chi$ on $X$} is a pair $(\Phi, \alpha)$, where $\Phi \subset \tX$ is a $\pi_1(X)$-fundamental domain and $\alpha$ is a closed $1$-form on $M$, such that
\begin{itemize}
\item[(i)] $\Phi$ is geometrically finite;
\item[(ii)] the de Rham cohomology class of $\alpha$ equals $\chi$;
\item[(iii)] if $h \colon \tX \to \R$ is a smooth function for which $\pi^*\alpha=dh$, then $h|_{\Phi}$ is bounded from above.
\end{itemize}
\end{df}

A function $h$ as above is unique up to a constant, so the property \textit{(iii)} depends only on $\Phi$ and $\alpha$.

\begin{lemma}\label{bns tamed}
Let $X$ be a smooth manifold, $\Gamma=\pi_1(X)$ and $\chi \in H^1(X,\R)$ a nonzero class. Suppose that there exists a taming pair $(\Phi, \alpha)$ for $\chi$ on $X$. Let $\pi \colon \tX \to X$ be the universal cover of $X$ and $h \colon \tX \to \R$ be such that $dh=\pi^*\alpha$. Suppose also that $\tX_h:=h^{-1}([0;+\infty[)$ is connected. Then under the canonical identification $H^1(X, \R) \simeq H^1(\Gamma, \R)$ one has $[\chi] \in \Sigma(\Gamma)$.
\end{lemma}
\begin{proof}
Let $S_{\Phi}$ be the finite generating set of $\Gamma$ associated to $\Phi$. By assumption, $\Phi$ is geometrically finite, meaning that $S_{\Phi}$ is finite and $C(\Gamma, S_{\Phi})$ is a Caley graph of $\Gamma$.

Choose a point $x_0 \in \Phi$. Without loss of generality, we may assume that $h(x_0)=0$. Let $M:=\sup h|_{\overline{\Phi}}$. Recall that  $M < \infty$ by the assumption that $(\Phi, \alpha)$ is a taming pair.

The function $h \colon \tX \to \R$ shares the \emph{automorphicity property}
\[
h(\gamma \cdot x) = h(x) + \chi(\gamma).
\]

Thus, for any $\gamma \in C_{\chi}(\Gamma, S_{\Phi})$ one has $h(\gamma\cdot x_0)=\chi(\gamma) \ge 0$ and $\gamma \cdot x_0 \in \tX_h$. 

Take any $\gamma \in C_{\chi}(\Gamma, S_{\Phi})$. Since $\tX_h$ is connected, there exists a path $\rho \colon [0;1] \to \tX_h$ connecting $x_0$ with $\gamma \cdot x_0$ inside $\tX_h$. One can find a sequence of points
\[
0=t_0 < t_1 < \ldots < t_n=1,
\]
such that $\rho([t_i;t_{i+1}])$ is contained in the closure of a single translation of the fundamental domain, say $\rho([t_i;t_{i+1}]) \subseteq \overline{\gamma_i \cdot \Phi}$. The translations $\gamma_i\cdot \Phi$ and $\gamma_{i+1}\cdot \Phi$ are adjacent and the  sequence $(e=\gamma_0, \ldots, \gamma_n=\gamma)$ determines a path in the Cayley graph $C(\Gamma, S_{\Phi})$. We claim that this path is contained in the subgraph $C_{\chi \ge -M}(\Gamma, S_{\Phi})$.

Indeed, suppose $\chi(\gamma_i)<-M$ for some $\gamma_i$. For every $t \in [t_i;t_{i+1}]$  we have $\rho(t) \in \widetilde{X}_h$, that is, $h(\rho(t)) \ge 0$. Then $\gamma_i \cdot \rho(t) \in \Phi$ and
\[
h(\gamma_i^{-1}\cdot \rho(t))=h(\rho(t))-\chi(\gamma_i) \ge M,
\]
which contradicts the fact that $M=\sup h|_{\overline{\Phi}}$.

We proved that the element $\gamma \in C_{\chi}(\Gamma, S_{\Phi})$ can be connected to the unity by a path inside $C_{\chi \ge -M}(\Gamma, S_{\Phi})$. By Proposition \ref{step back} this implies that $[\chi] \in \Sigma(\Gamma)$.
\end{proof}

One of the main steps of the proof of Theorem \ref{bns quasi-proj main} is to find a convenient taming pair for a given quasi-projective manifold $X$ and a class $\chi \in H^1(X, \R)$. 

The following proposition gives a version of \emph{pullback property} for taming pairs.

\begin{prop}\label{taming pullbacks}
Let $X$ and $Y$ be smooth manifolds and $f \colon X \to Y$ a smooth map. Let $\chi$ be a class in $H^1(Y, \R)$ and $(\Psi, \alpha)$ be a taming pair for $(Y, \chi)$. Let  $\pi_X \colon \widetilde{X} \to X$ and $\pi_Y \colon \widetilde{Y} \to Y$ be the universal covers of $X$ and $Y$ respectively, and $\widetilde{f} \colon \widetilde{X} \to \widetilde{Y}$  a lift of $f$.

Suppose there exists a geometrically finite fundamental domain $\Phi \subseteq \widetilde{X}$, such that $\widetilde{f}(\Phi)$ is contained inside the closure of a finite number of translations of $\Psi$, i.e.
\[
\widetilde{f}(\Phi) \subseteq \gamma_1 \cdot \overline{\Psi} \cup \ldots \cup \gamma_r \cdot \overline{\Psi}.
\]
for some finite subset $Q=\{\gamma_1, \ldots, \gamma_r\} \subseteq \pi_1(Y)$.

Then $(\Phi, f^*\alpha)$ is a taming pair for $f^*\chi$ on $X$. 
\end{prop}
\begin{proof}
Denote $\Psi_Q:= \bigcup_{\gamma \in Q} \gamma \cdot \overline{\Psi}$

Let $h_Y \colon \widetilde{Y} \to \R$ be such that $dh = \pi_Y^*\alpha$. We know that $h|_{\Psi}$ is bounded.

Denote $h_X:= h_Y \circ \widetilde{f}$. Then 
\[dh_X=\widetilde{f}^*dh_Y=\widetilde{f}^*\pi_Y^*\alpha=\pi_X^*f^*\alpha.
\]
At the same time, 
\[
\operatorname{sup}((h_X)|_{\Phi}) \le \operatorname{sup}((h_Y)|_{\Psi_Q}) = \max \{(\operatorname{sup}((h_Y)|_Q) + \chi(\gamma) \ | \ \gamma \in Q\} < \infty.
\]
Therefore, $h_X$ is bounded on $\Phi$ and $(\Phi, f^{*}\alpha)$ is a taming pair for $f^*\chi$ on $X$.
\end{proof}

Combining Proposition \ref{taming pullbacks} with Lemma \ref{finiteness of fundamental domains}, we get the following Corollary.

\begin{cor}\label{taming algebraic pullbacks}
Let $X$ and $Y$ be smooth algebraic varieties, either real or complex. Let $\pi_X \colon \tX \to X$ and $\pi_Y \colon \widetilde{Y} \to Y$ be their universal covers and  $\Phi \subset \tX$ (resp. $\Psi \subset \widetilde{Y}$) semialgebraic fundamental domains. Denote $X_0:=\pi_X(\Phi)$ (resp. $Y_0:=\pi_Y(\Psi)$). Let $f \colon X \to Y$ be an algebraic morphism such that $f(X_0) \cap Y_0 \neq \varnothing$ Take a nonzero class $\chi \in H^1(Y, \R)$ and assume that there exists a closed differenial $1$-form $\alpha$ on $Y$ such that $(\Psi, \alpha)$ is a taming pair for $\chi$ on $Y$. Then $(\Phi, f^*\alpha)$ is a taming pair for $f^*\chi$ on $X$.
\end{cor}

\section{Existence of taming pairs}\label{beta quasi-proj}

In this section, we prove the following theorem:

\begin{thrm}\label{beta theorem}
Let $X$ be a smooth quasi-projective variety and $\pi \colon \tX \to X$ its universal cover. Take a nonzero class $\chi \in H^1(X, \R)$. Then there exists a semialgebraic $\pi_1(X)$-fundamental domain $\Phi \subseteq \tX$ and a  holomorphic logarithmic $1$-form $\omega$ on $X$ such that $(\Phi, \operatorname{Re}\omega)$ is a taming pair for $\chi$ on $X$.
\end{thrm}

The strategy of the proof is the following. We first prove this theorem for the case where $X$ is a semiabelian variety (Lemma \ref{beta semiabelian}), and then deduce the general case using Corollary \ref{tame exists} and the fact that every class in $H^1(X, \R)$ pullbacks from a semiabelian variety.

\subsection{Fundamental domains for semiabelian varieties}\label{prisms}

\begin{df}
A connected smooth complex quasi-projective variety $A$ is called \emph{semiabelian}\footnote{The term \emph{quasi-abelian} also appears in the literature} if it is a commutative algebraic group which is an extension of the form
\[
0 \to T \to A \to B \to 0,
\]
where $T \simeq (\C^{\times})^k$ is an algebraic torus and $B$ is an abelian variety.
\end{df}

As a real Lie group, a semiabelian variety $A$ is isomorphic to a product $T \times B$, but the corresponding extension usually does not split holomorphically.

\begin{thrm}[Iitaka, \cite{Iit}]\label{albanese exists}
Let $X$ be a smooth complex algebraic variety. There exists a semiabelian variety $\Alb(X)$ called \emph{the Albanese variety}\footnote{Some authors employ the term ''quasi-Albanese'' in the case where $X$ is quasi-projective} of $X$ and an algebraic map $\alb \colon X \to \Alb(X)$ such that any algebraic map to a semiabelian variety $f \colon X \to A$ factorises through $\alb$. Moreover, $\alb^* \colon H^1(\Alb(X), \R) \to H^1(X, \R)$ is an isomorphism.
\end{thrm}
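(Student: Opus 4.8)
The plan is to construct $\Alb(X)$ directly as a quotient of the dual of the space of logarithmic $1$-forms, in the spirit of the classical Albanese construction, and to verify its properties using Deligne's mixed Hodge theory. First I would fix a smooth compactification $X\subseteq\overline X$ with $D:=\overline X\setminus X$ a simple normal crossings divisor (resolution of singularities), taking $\overline X$ projective when $X$ is quasi-projective. I would then set $V:=H^0(\overline X,\Omega^1_{\overline X}(\log D))^{\vee}$; by \cite{Del} the space $H^0(\overline X,\Omega^1_{\overline X}(\log D))$ equals $F^1H^1(X,\C)$ for the mixed Hodge structure on $H^1(X,\C)$, whose weight filtration has $W_1H^1(X,\Q)=H^1(\overline X,\Q)$ pure of weight $1$ and $\operatorname{Gr}^W_2H^1(X,\Q)$ of Hodge--Tate type $(1,1)$. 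Writing $g:=h^{1,0}(\overline X)$ and $k:=\dim\operatorname{Gr}^W_2H^1(X,\Q)$, this gives $\dim_{\C}V=g+k$ and $b_1(X)=2g+k$. Fixing $x_0\in X$, I would define $\alb\colon X\to V$ by $\alb(x)=\left(\omega\mapsto\int_{x_0}^{x}\omega\right)$, well defined modulo $\Lambda:=\operatorname{im}\!\big(H_1(X,\Z)\to V,\ \gamma\mapsto(\omega\mapsto\int_\gamma\omega)\big)$, and put $\Alb(X):=V/\Lambda$.

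To see that $\Lambda$ is a lattice and that the ``moreover'' holds, I would use the identity $F^1H^1(X,\C)+\overline{F^1H^1(X,\C)}=H^1(X,\C)$, which follows from the description of $\operatorname{Gr}^W_{\bullet}H^1(X)$ above together with strictness of the Hodge filtration with respect to the weight filtration. Indeed, if a real class $c\in H_1(X,\R)$ annihilates $F^1H^1(X,\C)$ under the period pairing, then it annihilates $\overline{F^1H^1(X,\C)}$ too, hence all of $H^1(X,\C)$, so $c=0$; thus the period map $H_1(X,\R)\to V$ is injective, and since $\dim_{\R}H_1(X,\R)=2g+k$ it identifies $H_1(X,\Z)/\mathrm{tors}$ with a genuine (non-cocompact, when $k>0$) lattice $\Lambda\subset V$. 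In particular $\alb_*\colon H_1(X,\Z)\to\Lambda=H_1(\Alb(X),\Z)$ is an isomorphism modulo torsion, and dualising shows $\alb^*\colon H^1(\Alb(X),\R)\to H^1(X,\R)$ is an isomorphism.

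Next I would show $\Alb(X)$ is semiabelian. Dualising the weight filtration exhibits it as an extension $0\to T\to\Alb(X)\to B\to 0$, where $B=\Alb(\overline X)$ is the classical Albanese variety of $\overline X$ (an abelian variety) and $T$ is the torus whose cocharacter lattice is spanned by the classes of small loops $\gamma_i$ around the components of $D$; since $\int_{\gamma_i}\frac{dz_j}{z_j}=2\pi i\,\delta_{ij}$, one gets $T\cong\C^{k}/2\pi i\,\Z^{k}\cong(\C^{\times})^{k}$. The point that this extension is \emph{algebraic} --- so that $\Alb(X)$ is a bona fide semiabelian variety and not merely a complex Lie group --- is where I expect the real work to lie: it requires recognising the period-theoretic extension class as the Abel--Jacobi image of the residue divisor of $D$ inside $\operatorname{Ext}^1(B,\mathbb{G}_m)=\widehat B$, which is precisely the content of Deligne's theory of $1$-motives (equivalently, Serre's generalised Jacobians). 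Granting this, $\alb$ is holomorphic and hence algebraic by GAGA applied to its graph in $\overline X\times\overline{\Alb(X)}$ for a smooth toroidal equivariant compactification $\overline{\Alb(X)}$.

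Finally, for the universal property, given $f\colon X\to A$ with $A$ semiabelian I would choose a smooth toroidal equivariant compactification $\overline A$ and use that the translation-invariant $1$-forms on $A$ extend to logarithmic forms along $\overline A\setminus A$ (a second, more routine, technical point that still forces the equivariant compactification); since $f(X)\subseteq A$ forces $\bar f^{-1}(\overline A\setminus A)\subseteq D$ after extending $f$ to $\bar f\colon\overline X\to\overline A$ (blowing up $\overline X$ if needed), pull-back sends $H^0(\Omega^1_A)$ into $H^0(\overline X,\Omega^1_{\overline X}(\log D))$. Transposing gives a $\C$-linear map $V\to\mathrm{Lie}(A)$ carrying $\Lambda$ into $H_1(A,\Z)$, hence a homomorphism of complex Lie groups $\Alb(X)\to A$, automatically algebraic, and comparing Albanese-type integrals shows $f$ equals this homomorphism composed with $\alb$, up to translation in $A$; uniqueness is immediate since $\alb(X)$ generates $\Alb(X)$. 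So overall the only genuinely non-formal ingredient is the algebraicity of the semiabelian extension in the third step.
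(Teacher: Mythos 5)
The paper gives no proof of this statement---it is imported from Iitaka \cite{Iit} (with a detailed modern write-up in Fujino's notes \cite{Fuj}, also cited in the bibliography)---and your construction of $\Alb(X)$ as $H^0(\overline X,\Omega^1_{\overline X}(\log D))^{\vee}$ modulo the period subgroup, with the injectivity of the period map via $F^1+\overline{F^1}=H^1(X,\C)$ and the universal property via pullback of invariant forms, is exactly that standard argument and is correct in outline. The only points you gloss are the ones you yourself flag: the algebraicity of the extension $0\to T\to\Alb(X)\to\Alb(\overline X)\to 0$ (which can also be seen from $\operatorname{Ext}^1_{\mathrm{an}}(\Alb(\overline X),\C^{\times})\cong\Pic^0(\overline X)$, without invoking the full theory of $1$-motives), and, in the GAGA step, the prior fact that the Albanese integrals have at worst logarithmic growth near $D$, so that $\alb$ extends meromorphically to $\overline X$---without this the closure of the graph need not be analytic, and this extension is where Iitaka and Fujino do genuine work.
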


Our aim is to construct a convenient semialgebraic fundamental domain in the universal cover of an arbitrary semiabelian variety. First, we do it for algebraic tori.

Let $T=(\C^{\times})^k$ be an algebraic torus. Denote its universal cover by $V_T$. It is biholomorphic to $\C^k$ with the covering map given by $\pi_T \colon (z_1, \ldots, z_k) \mapsto (e^{2\pi i z_1}, \ldots, e^{2 \pi i z_k})$.We call  \emph{a standard  strip} the set
\[
\Pi:=\{(z_1, \ldots, z_k) \ | \forall j, \  -\frac{1}{2} < \operatorname{Re}(z_j) < \frac{1}{2} \} \subset V_T.
\]
The standard strip $\Pi$ is a semialgebraic $\pi_1(T)$-fundamental domain: its image in $T$ is a complement to the real semialgebraic subgroup $\{(t_1, \ldots, t_k) \ | \ t_j \in \R^{\times}_{<0}\} \subset T$. 

Let $B$ be an abelian variety of dimension $m$ and $V_B \simeq \C^{m} \xrightarrow{\pi_B} B$ its universal cover. Identify $\pi_1(B)$ with a lattice $\Lambda_B \subset V_B$. Choose a set of generators $e_1, \ldots, e_{2m}$ in $\Lambda_B$. They form a basis in the underlying real vector space $(V_B)_{\R}$. Let $x_i$ be the corresponding real coordinates on $(V_B)_{\R}$. We define \emph{the standard parallelepiped} as
\[\Xi_B:=\{(x_1, \ldots, x_{2m} \ | \ \forall j,  \ -\frac{1}{2} < x_j < \frac{1}{2}\} \subset V_B.
\]
Again, this is a semialgebraic $\pi_1(B)$-fundamental domain in $V_B$, that is, $\pi_B(\Xi_B)$ is a semialgebraic subset of $B$ (its complement is a collection of translates of real closed subgroups in $B$, if one views $B$  as a real algebraic group by the restriction of scalars).

Starting from now, we fix a semiabelian variety $A$ and its presentation in the form
\[
0 \to T \xrightarrow{j} A \xrightarrow{p} B \to 0,
\]
where $T \simeq (\C^{\times})^k$ and $B$ is an abelian variety of dimension $m$. Denote by $V_A \simeq \C^{k+m}$ (respectively, by $V_T \simeq \C^k$ and $V_B \simeq \C^m$) the universal cover of $A$ (respectively, of $T$ and $B$). These three vector spaces form an exact sequence 
\[
0 \to V_T \xrightarrow{\widetilde{j}} V_A \xrightarrow{\widetilde{p}} V_B \to 0.
\]

Denote $\pi_1(A):=\Lambda_A$ (respectively, $\pi_1(B):=\Lambda_B$ and $\pi_1(T):=\Lambda_T$). These  are free abelian groups that fit into a non-canonically split exact sequence
\[
0 \to \Lambda_T \xrightarrow{j_*} \Lambda_A \xrightarrow{p_*} \Lambda_B \to 0.
\]

At the same time, $\Lambda_T$ can be viewed as an additive subgroup of $V_T$ (respectively, $\Lambda_A$ is an additive subgroup of $V_A$ and $\Lambda_B$ is an additive subgroup of $V_B$). Note that $V_T=\Lambda_T \o_{\Z} \C$, whereas $(V_B)_{\R}=\Lambda_B \o_{\Z} \R$. In particular, $\rk \Lambda_T=\dim_{\C} V_T$ and $\rk \Lambda_B=2\dim_{\C} V_B$.

The map $\widetilde{p}$ is the lift of
\[
p \colon V_A/\Lambda_A=A \to V_B/\Lambda_B=B,\]

and one has $\widetilde{p}(v+\lambda)=\widetilde{p}(v)+p_*(\lambda)$ for any $v \in V_A$ and $\lambda \in \Lambda_A$. 
 
We aim to construct out of the standard strip $\Pi \subset V_T$ and the standard parallelepiped $\Xi_B \subset V_B$ a convenient $\pi_1(A)$-fundamental domain in $V_A$. 

To do this, choose a linear splitting of complex vector spaces $\widetilde{s} \colon V_B \to V_A$.  Such a splitting allows us to write an isomorphism $\Theta_{\widetilde{s}} \colon V_A \xrightarrow{\sim} V_B \times V_T$ where the projection on the first factor is $\widetilde{p}$ and the projection on the second one is given by $V_A \to V_A/\widetilde{s}(V_B) \xrightarrow{\sim} V_T$. We denote the second projection by $\widetilde{q}$.

Define 
\[\Psi_{A, \widetilde{s}}= \Psi:=\Theta^{-1}_{\widetilde{s}}(\Pi \times \Xi_B) \subset V_A.
\]

We call $\Psi_{A, \widetilde{s}}$ the \emph{standard prism} associated to the semiabelian variety $A$ and the section $\widetilde{s}$. The choice of the section $\widetilde{s}$ is technical and does not play a big role. We will omit $\widetilde{s}$ and $A$ from the notation when they are clear from the context.

\begin{prop}
The standard prism $\Psi \subset V_A$ is a tame $\pi_1(A)$-fundamental domain.
\end{prop}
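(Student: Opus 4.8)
The plan is to verify, in suitable product coordinates, the four defining properties of a tame $\pi_1(A)$-fundamental domain --- being a domain, tiling $V_A$ by its closed translates, having pairwise disjoint open translates, and geometric finiteness --- together with semialgebraicity of the image $\pi_A(\Psi)\subset A$, where $\pi_A\colon V_A\to A$ denotes the covering map. Throughout I would use the splitting $V_A\xrightarrow{\sim}V_T\times V_B$, $x\mapsto(\widetilde q(x),\widetilde p(x))$, in which $\Psi=\Pi\times\Xi_B$ and $\Lambda_A$ acts by the \emph{skew translations} $\gamma\cdot(w,v)=\bigl(w+\widetilde q(\gamma),\,v+p_*(\gamma)\bigr)$. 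The single feature to keep track of is that $p_*(\gamma)$ ranges over the full lattice $\Lambda_B$, whereas $\widetilde q(\gamma)$ only ranges over a coset of $\widetilde q(\widetilde j(\Lambda_T))$; since $\widetilde q\circ\widetilde j=\operatorname{id}_{V_T}$, that group is the standard \emph{real} lattice $\Z^k\subset\R^k\subset\C^k=V_T$ (with the normalisation of $V_T\to T$ fixed above), so it spans only a real form of $V_T$.

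That $\Psi$ is a domain is immediate, $\Pi$ and $\Xi_B$ being open boxes and the interior of a product of two sets being the product of the interiors. For the tiling property, given $(w,v)\in V_A$ I would first use that $\overline{\Xi_B}$ tiles $V_B$ under $\Lambda_B$ to pick $\beta\in\Lambda_B$ with $v-\beta\in\overline{\Xi_B}$, lift $\beta$ to $\gamma_\beta\in\Lambda_A$, then use that $\overline\Pi$ tiles $V_T$ under $\Z^k$ to pick $\mu\in\Z^k$ with $w-\widetilde q(\gamma_\beta)-\mu\in\overline\Pi$; then $\gamma:=\gamma_\beta+\widetilde j(\mu)$ satisfies $\gamma^{-1}\cdot(w,v)\in\overline\Pi\times\overline{\Xi_B}=\overline\Psi$. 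Disjointness of open translates and finiteness of $S_\Psi$ I would get by projecting along $\widetilde p$: from $\gamma\cdot\overline\Psi\cap\overline\Psi\neq\varnothing$ one gets $p_*(\gamma)\in\{-1,0,1\}^{2m}$ (and $p_*(\gamma)=0$ for open translates), and for a fixed value of $p_*(\gamma)$ the corresponding $\gamma$'s form a coset which $\widetilde q$ maps bijectively onto a coset of $\Z^k$ inside $\C^k$; the remaining condition $(\widetilde q(\gamma)+\overline\Pi)\cap\overline\Pi\neq\varnothing$ says $\operatorname{Re}\widetilde q(\gamma)\in[-1,1]^k$, and a coset of $\Z^k$ (constant imaginary part, real part a translate of $\Z^k$) meets this box in at most $3^k$ points. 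Hence $|S_\Psi|\le 3^{2m+k}$; and in the open case $p_*(\gamma)=0$ and $\operatorname{Re}\widetilde q(\gamma)\in(-1,1)^k\cap\Z^k=\{0\}$ force $\gamma=e$. Thus $\Psi$ is a geometrically finite $\pi_1(A)$-fundamental domain.

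It remains to prove $\pi_A(\Psi)$ semialgebraic; this is the delicate point, and it is not automatic precisely because $A$ is not the product variety $T\times B$, so the remark that products of tame domains are tame does not apply. Since $\pi_A$ is open and the open translates $\gamma\cdot\Psi$ are pairwise disjoint, none of them can meet $\partial\Psi$, so $\pi_A(\Psi)=A\setminus\pi_A(\partial\Psi)$, with $\partial\Psi=(\partial\Pi\times\overline{\Xi_B})\cup(\overline\Pi\times\partial\Xi_B)$. One checks that $\pi_A(\overline\Pi\times\partial\Xi_B)=p^{-1}(\pi_B(\partial\Xi_B))$: the inclusion $\subseteq$ holds because $p\circ\pi_A(w,v)=\pi_B(v)$, and $\supseteq$ because $\overline\Pi$ already surjects onto $T$ under $\pi_T$. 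Now $\pi_B(\partial\Xi_B)=B\setminus\pi_B(\Xi_B)$ is a finite union of translates of real closed subgroups of $B$, hence semialgebraic (as established for $\Xi_B$), so $p^{-1}(\pi_B(\partial\Xi_B))$ is semialgebraic, $p$ being algebraic. For the other piece, over the complementary semialgebraic open $\pi_B(\Xi_B)\subseteq B$ the fibre of $\pi_A(\partial\Pi\times\overline{\Xi_B})$ over $b$ equals $j(Z_T)\cdot s(b)$, where $Z_T:=\pi_T(\partial\Pi)=\bigcup_{1\le j\le k}\{u\in T\mid u_j\in\R_{<0}\}$ is a fixed semialgebraic subset of $T$ and $s(b):=\pi_A\bigl(\widetilde s\bigl((\pi_B|_{\Xi_B})^{-1}(b)\bigr)\bigr)$ is a section of $p$ over $\pi_B(\Xi_B)$ (transcendental, being built from $\pi_A$). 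To conclude that this fibrewise translate of $Z_T$ assembles to a semialgebraic subset of $A$, I would exploit that $p\colon A\to B$ is a Zariski-locally-trivial $\mathbb{G}_m^k$-torsor: choosing an algebraic section over a Zariski-dense open of $B$, refining a finite cover by such opens to a finite semialgebraic partition of $B$, and checking that in the corresponding algebraic trivialisations of $p$ the parameter comparing the algebraic and prism sections is semialgebraic in the base, one obtains semialgebraicity. Combining both pieces shows $\pi_A(\partial\Psi)$, hence $\pi_A(\Psi)$, is semialgebraic.

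I expect this last step --- transferring the semialgebraicity of the ``seam'' $Z_T$ from the fibre $T$ to the total space $A$ of the extension $0\to T\to A\to B\to 0$, which in general does not split algebraically, so that the pertinent section is genuinely transcendental --- to be the main obstacle; everything else is routine bookkeeping with the skew-product action above.
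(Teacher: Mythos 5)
The fundamental-domain part of your write-up (domain, tiling, disjointness via the two projections, the explicit bound on $S_\Psi$, and the reduction of tameness to the semialgebraicity of $\pi_A(\partial\Psi)=\pi_A(\partial\Pi\times\overline{\Xi_B})\cup\pi_A(\overline\Pi\times\partial\Xi_B)$, with the second piece handled through $p^{-1}(\pi_B(\partial\Xi_B))$) is correct and follows the same route as the paper. Your instinct about where the real difficulty sits is also sound, and in fact sharper than the paper's own treatment: the paper simply asserts that $\widetilde s|_{\Xi_B}$ ``descends to a real algebraic section $s\colon B_0\to A_0$'', whereas, as you note, this section is built from the covering map and is in general transcendental.

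The problem is that your resolution of that difficulty is exactly the unproved point, and the claim you propose to ``check'' is false for the section $\widetilde s$ as it is chosen in the paper (an arbitrary complex-linear splitting). Already in the split case $A=T\times B$ with $\widetilde s(v)=(\ell(v),v)$, $\ell\neq 0$ complex-linear, the comparison of the prism section with the obvious algebraic section is $b\mapsto e^{2\pi i\,\ell(v_0(b))}$, where $v_0(b)\in\Xi_B$ is the distinguished lift; its angular part is $b\mapsto e^{2\pi i(a_1x_1(b)+\dots+a_{2m}x_{2m}(b))}$ with $a_i=\operatorname{Re}\,\ell(e_i)$ and $x_i(b)$ the $\Xi_B$-coordinates of $v_0(b)$. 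If some $a_i\notin\Q$, this graph is not semialgebraic: any polynomial vanishing on it vanishes, by analytic continuation along the homomorphism $\R^{2m}\to B\times S^1$, $x\mapsto(\pi_B(\sum x_ie_i),e^{2\pi i\sum a_ix_i})$, on the Euclidean closure of a non-closed subgroup, which has dimension $>2m$, while a semialgebraic set has the dimension of its Zariski closure. Applying the fibrewise retraction $t\mapsto t/|t|$, the same argument shows that the union of rays $\bigcup_b j(Z_T)\cdot s(b)$ --- hence $\pi_A(\Psi)$ itself --- fails to be semialgebraic for such $\widetilde s$; so no bookkeeping in torsor trivialisations can close the gap, and the corresponding assertion in the paper's proof is likewise unjustified as stated. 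What saves the statement is an extra normalisation: one may replace $\widetilde s$ by $\widetilde s+\widetilde j\circ\phi$ with $\phi$ complex-linear chosen so that $\operatorname{Re}\widetilde q(\Lambda_A)\subseteq\Z^k$ (the real part of a complex-linear map can be prescribed arbitrarily on the real basis $e_1,\dots,e_{2m}$ of $V_B$). With that choice the functions $e^{2\pi i\operatorname{Re}\widetilde q_j}$ descend to continuous characters of $A$ trivial on $j(\R_{>0}^k)$, hence to characters of a compact real algebraic quotient, which are algebraic, and the seam $\pi_A(\partial\Pi\times\overline{\Xi_B})$ becomes a finite union of their fibres, which is semialgebraic. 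Alternatively, one can drop semialgebraicity altogether and observe that $\pi_A|_{\overline\Psi}$ is definable in $\R_{\an,\exp}$, which is all that Lemma \ref{finiteness of fundamental domains} and the later arguments actually use. As written, however, your final step --- the one you yourself flag as the main obstacle --- is a genuine gap.
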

\begin{proof}
The product of domains $\Pi \times \Xi_B$ is a domain and $\Theta^{-1}_{\widetilde{s}}$ is a homeomorphism, thus $\Psi$ is a domain.

Let us check that the translation of $\Psi$ by a nonzero element  $\lambda \in \Lambda_A=\pi_1(A)$ does not intersect $\Psi$. 

Assume first that $p_*(\lambda) \neq 0$. For any $v \in \Psi$ one has $\widetilde{p}(v+\lambda) \in (\Xi_B + p_*(\lambda))$. But $\Psi \subseteq \widetilde{p}^{-1}(\Xi_B)$ and $\Xi_B$ and $\Xi_B+p_*(\lambda)$ do not intersect. Therefore, $v+\lambda$ does not belong to $ \Psi$. 

If $p_*(\lambda)=0$ then $\lambda$ lies in the image of $j_* \colon \Lambda_T \to \Lambda_A$ and a similar argument with projection on $V_T$ applies.

We conclude that the map $\pi_A|_{\Psi} \colon \Psi\to A$ is a homeomorphism on its image. The domain $\Psi$ is contractible and $\pi_A(\Psi)$ is open and dense in $A$, therefore $\Psi$ is a $\pi_1(A)$-fundamental domain.

Now we check that $\pi_A(\Psi) \subset A$ is semialgebraic. We show that its complement is a union of translates of closed real algebraic subgroups of $A$.

Let $B_0:=\pi_B(\Xi_B)$ and $A_0:=p^{-1}(B_0)$. Notice that $B_0$ is a real semialgebraic subset of $B$ and the map $p$ is algebraic, hence $A_0$ is semialgebraic in $A$. The restriction of the section $\widetilde{s} \colon V_B \to V_A$ on $\Xi_B$ descends to a real algebraic section $s \colon B_0 \to A_0$. 

Let $T = p^{-1}(0) \subset A_0$ be the fibre over the neutral element in $B$ and $T_0$ be the image of $\Pi \subset V_T$ under the covering map $\pi_T \colon V_T \to T$. 

The splitting $\widetilde{s} \colon V_B \to V_A$ induces a linear projection $\widetilde{q} \colon V_A \to V_T$ which descends to a projection $q \colon A_0 \to T$. It is given by the formula
\[
q(z)= z \cdot (s(p(z))^{-1}.
\]
(the multiplication and inversion are given in the sense of the algebraic group $A$). Since $p$ and $s$ are semialgebraic and $A$ is an algebraic group, the map $q \colon A_0 \to T$ is semialgebraic. 

We construct a semialgebraic homeomorphism $p \times q \colon A_0 \to B_0 \times T$ and $\pi_A(\Psi)$ is precisely the preimage of the semialgebraic subset $B_0 \times T_0$ under this map. Hence it is semiaglebraic.
\end{proof}

In summary, the complement of $\pi_A(\Psi)$ in $A$ is a union of translates of real closed subgroups in $A$. In what follows we denote it by $R$. We also write $\widetilde{R}:=\pi_A^{-1}(R)$. Note that 
\[
\widetilde{R}=\bigcup_{\gamma \in \pi_1(A)} \gamma \cdot \di \Psi
\]
and 
\[
V_A \setminus \widetilde{R}=\bigcup_{\gamma\in \pi_1(A)} \gamma \cdot \Psi.
\]
 The connected components of $V_A \setminus \widetilde{R}$ form a torsor over $\pi_1(A)$ and each such component corresponds to a translation of the fundamental domain $\Psi$.

\subsection{Logarithmic 1-forms on semiabelian varieties}\label{semiabelian forms}

Let $T=(\C^{\times})^k$ be an algebraic torus and $(t_1, \ldots, t_k)$ the standard coordinates on it. We denote the standard coordinates on its universal cover $V_T=\C^k$ by $(z_1, \ldots, z_k)$, so that $t_j=e^{2\pi i z_j}$. 
Consider the collection of holomorphic $1$-forms
\[
\tau_j:= \frac{1}{2\pi i} \cdot \frac{dt_j}{t_j}.
\]
Each $\tau_j$ has logarithmic poles along the boundary of $T$ in the standard compactification $T \hookrightarrow \mathbb{CP}^k$ and is preserved under the action of $T$ on itself by multiplications. The classes of the real parts $[\operatorname{Re}(\tau_j)]$ form a basis in $H^1(T, \R)$. Note that the pullback of the form $\tau_j$ on $V_T$ is nothing but $dz_j$. Since $\operatorname{Re}(z_j)$ are bounded  on the standard strip $\Pi\subset V_T$, one easily deduces Theorem \ref{beta theorem} in the case $X=T$. Indeed, every class $\chi\in H^1(X, \R)$ can be written as the de Rham class of the real part of a closed logarithmic $1$-form $\tau$, where $\tau=\sum_j a_j\tau_j$ with $a_j \in \R$ and the pair $(\Pi, \tau)$ is taming for $T$ and $\chi$.

Let $A$ be a semiabelian variety. We follow the notations introduced in the subsection \ref{prisms}. Denote
\[
\widehat{A}:=A \times_B V_B.
\]
This is a holomorphic principal $T$-bundle over $V_B$, non-canonically biholomorphic to $(\C^{\times})^k \times \C^m$ . A linear splitting $\widetilde{s} \colon V_B \to V_A$ descends to a holomorphic section $\widehat{s} \colon V_B \to \widehat{A}$ which trivialises the bundle $\widehat{A} \to V_{B}$ and gives  an isomorphism $\widehat{A} \xrightarrow{\sim} V_B \times T$. We obtain a holomorphic projection $\widehat{q} \colon \widehat{A} \to T$. Denote $\widehat{\tau}_j:=\widehat{q}^{*}\tau_j$. The map $\widehat{q}^* \colon H^1(T, \R) \to H^1(\widehat{A}, \R)$ is an isomorphism. The forms $\widehat{\tau}_j$ are holomorphic differential $1$-forms on $\widehat{A}$ and the cohomology classes of their real parts $[\operatorname{Re}(\tau_j)]$ form a basis in $H^1(\widehat{A},\R)$. Moreover, they have logarithmic poles in the algebraic structure on $\widehat{A}$ given by the identification with $T \times \C^m$ via the section $\widehat{s}$\footnote{Note that a priori there is no canonical choice of an algebraic structure on the complex manifold $\widehat{A}$ and in fact different choices of $\widehat{s}$ will result in  non-equivalent algebraic structures.}.

The complex manifold $\widehat{A}$ is endowed with a map $\widehat{\pi} \colon \widehat{A} \to A$. This is a Galois cover and $\pi_1(A)$ acts on $\widehat{A}$ through the homomorphism $p_* \colon \pi_1(A) \to \pi_1(B)$.

The composition of the section $\widehat{s} \colon V_B \to \widehat{A}$ and the projection $\widehat{q} \colon \widehat{A} \to T$ determines a homomorphism of complex Lie groups $\widehat{r}=\widehat{q}\circ \widehat{s} \colon V_B \to T$.

\begin{prop}\label{forms on semiabelian}
Each form $\widehat{\tau}_j$ descends to a holomorphic logarithmic $1$-form  $\sigma_j$ on $A$. Moreover, every class $\chi \in H^1(A, \R)$ can be written as a linear combination
\[
\chi=p^*\chi_B+\sum_j a_j[\operatorname{Re}(\sigma_j)],
\]
for some $\chi_B \in H^1(B, \R)$.
\end{prop}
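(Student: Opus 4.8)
The plan is to prove Proposition \ref{forms on semiabelian} in two steps: first, to show that each $\widehat{\tau}_j$ descends along $\widehat{\pi}\colon\widehat{A}\to A$, and second, to decompose an arbitrary cohomology class of $A$.

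\textbf{Descent of the forms.} First I would recall that $\widehat{A}=A\times_B V_B$, so that $\widehat{\pi}\colon\widehat{A}\to A$ is the pullback of the universal cover $\pi_B\colon V_B\to B$, hence a Galois covering with deck group $\pi_1(B)=\Lambda_B$ acting through its action on the $V_B$-factor. A holomorphic $1$-form on $\widehat{A}$ descends to $A$ if and only if it is $\Lambda_B$-invariant. So the key point is to check that $\widehat{\tau}_j=\widehat{q}^*\tau_j$ is invariant under this action. Here $\widehat{q}\colon\widehat{A}\xrightarrow{\sim}V_B\times T\to T$ is the projection coming from the linear section $\widetilde{s}$. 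The subtlety is that $\widehat{q}$ itself is \emph{not} $\Lambda_B$-invariant: translating the $V_B$-coordinate by $\lambda\in\Lambda_B$ changes the trivialisation $\widehat{s}$ by a transition function $g_\lambda\colon V_B\to T$, so $\widehat{q}(x+\lambda)=g_\lambda(\widetilde{p}(x))\cdot\widehat{q}(x)$ (using multiplicativity in the fibre $T$). However, $\tau_j=\frac{1}{2\pi i}\frac{dt_j}{t_j}$ is precisely the invariant logarithmic form on the torus $T$: it is translation-invariant, so $m^*\tau_j=\mathrm{pr}_1^*\tau_j+\mathrm{pr}_2^*\tau_j$ under the multiplication $m\colon T\times T\to T$. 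Therefore
\[
(x\mapsto x+\lambda)^*\widehat{\tau}_j = \widehat{q}^*\tau_j + \widetilde{p}^*(g_\lambda^*\tau_j).
\]
The error term $g_\lambda^*\tau_j$ is a holomorphic $1$-form on the \emph{simply connected} space $V_B$, so it is $d$ of a holomorphic function; but more is true — since $g_\lambda\colon V_B\to T$ is a holomorphic map and $\tau_j$ has period lattice $\Z$, and $V_B$ is contractible, $g_\lambda$ lifts to $\widetilde{g}_\lambda\colon V_B\to V_T=\C^k$ and $g_\lambda^*\tau_j=d(\widetilde{g}_\lambda)_j$ is the differential of a global holomorphic function. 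The transition functions of a principal $T$-bundle that is pulled back from $B$ — equivalently, the cocycle defining $A$ as an extension of $B$ by $T$ — are, after the standard normalisation, \emph{affine-linear} in the $V_B$-coordinate (this is the classical description of semiabelian varieties: the extension class lives in $H^1(B,\O_B)^{\oplus k}$, realised by linear-in-$z$ cocycles). Hence $(\widetilde{g}_\lambda)_j$ is affine-linear, so $g_\lambda^*\tau_j = d(\text{affine-linear})$ is a \emph{constant} holomorphic $1$-form $c_{\lambda,j}\in V_B^*\otimes\C$. Thus $(x\mapsto x+\lambda)^*\widehat{\tau}_j - \widehat{\tau}_j=\widetilde{p}^*c_{\lambda,j}$ is a constant form pulled back from $V_B$. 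To kill this, I would modify $\widehat{\tau}_j$ by an appropriate constant form pulled back from $V_B$: since $\lambda\mapsto c_{\lambda,j}$ is a group homomorphism $\Lambda_B\to V_B^*\otimes\C$, hence determined by a single element of $V_B^*\otimes\C$ extended $\C$-linearly, we can subtract $\widetilde{p}^*$ of the corresponding constant form (which, being constant and $\widetilde{p}$-pulled-back, is $\Lambda_B$-semi-invariant with exactly the needed cocycle). After this correction the form becomes genuinely $\Lambda_B$-invariant and descends to a holomorphic $1$-form $\sigma_j$ on $A$. That $\sigma_j$ is logarithmic follows because $\widehat{\tau}_j$ is logarithmic for the algebraic structure $\C^m\times T$ and the correction term, being a constant $1$-form, extends holomorphically (hence is logarithmic) on any equivariant compactification; descent preserves the logarithmic property since $\widehat{\pi}$ is étale and the compactification of $A$ can be chosen compatibly.

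\textbf{The cohomology decomposition.} For the second assertion, I would use the exact sequence $0\to T\to A\xrightarrow{p} B\to 0$ and the associated exact sequence in cohomology. Topologically $A$ is a torus bundle over $B$, and one has $0\to H^1(B,\R)\xrightarrow{p^*}H^1(A,\R)\xrightarrow{j^*}H^1(T,\R)$, where $j\colon T\hookrightarrow A$ is the inclusion of a fibre. The classes $[\operatorname{Re}\sigma_j]$ restrict on the fibre $T$ to $[\operatorname{Re}\tau_j]$ (since $\widehat{q}$ restricts to the identity $T\to T$ on fibres and the correction term, pulled back from $V_B$, restricts to zero), and $\{[\operatorname{Re}\tau_j]\}_j$ is a basis of $H^1(T,\R)$. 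Hence for arbitrary $\chi\in H^1(A,\R)$, writing $j^*\chi=\sum_j a_j[\operatorname{Re}\tau_j]$, the class $\chi-\sum_j a_j[\operatorname{Re}\sigma_j]$ restricts to zero on the fibre, so by exactness it lies in $p^*H^1(B,\R)$; call it $\chi_0$. This gives exactly $\chi=\sum_j a_j[\operatorname{Re}\sigma_j]+\chi_0$ with $\chi_0\in p^*H^1(B,\R)$, as claimed.

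\textbf{Main obstacle.} The routine part is the cohomology exact sequence; the real work is in the descent step, specifically in controlling the transition functions $g_\lambda$ precisely enough to see that the obstruction to $\Lambda_B$-invariance of $\widehat{\tau}_j$ is a \emph{constant} form pulled back from $V_B$ (not merely a closed, or exact, one). I expect the cleanest route is to invoke the classical structure theory of semiabelian varieties — that $A$ arises from a cocycle representing a class in $\operatorname{Ext}^1(B,T)=H^1(B,\O_B)\otimes X^*(T)$ which can be taken linear in the universal-cover coordinate — rather than re-deriving it; with that in hand the correction is immediate. One should also double-check that after correcting $\sigma_j$ the cohomology class $[\operatorname{Re}\sigma_j]\in H^1(A,\R)$ is unchanged on the fibre, which it is since the correction term is $\widetilde{p}$-pulled-back hence fibrewise zero, so the decomposition argument goes through unaffected.
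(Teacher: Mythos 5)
Your overall strategy coincides with the paper's (descend $\widehat{\tau}_j$ by checking invariance under the deck group of $\widehat{\pi}$, then decompose $H^1(A,\R)$), and your second step is fine: restricting to a fibre $T\hookrightarrow A$ and using exactness of $0\to H^1(B,\R)\xrightarrow{p^*}H^1(A,\R)\xrightarrow{j^*}H^1(T,\R)$ is an essentially equivalent variant of the paper's argument, which instead uses that $\ker\bigl(\widehat{\pi}^*\colon H^1(A,\R)\to H^1(\widehat{A},\R)\bigr)=p^*H^1(B,\R)$. The flaw is in the descent step. With the trivialisation $\widehat{A}\simeq T\times V_B$ induced by the \emph{linear} section $\widetilde{s}$, the deck transformation attached to $\lambda\in\Lambda_B$ is translation by a lift $\widehat{\lambda}\in\Lambda_A$ on $\widehat{A}=V_A/j_*\Lambda_T$; composing with the linear projection $\widetilde{q}$ gives $\widetilde{q}(x+\widehat{\lambda})=\widetilde{q}(x)+\widetilde{q}(\widehat{\lambda})$, so the action on the $T$-factor is multiplication by the \emph{constant} $c_\lambda\in T$ (the image of $\widetilde{q}(\widehat{\lambda})$ in $T=V_T/\Lambda_T$), independent of the $V_B$-coordinate. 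Since $\tau_j$ is invariant under translations of $T$, the form $\widehat{\tau}_j=\widehat{q}^*\tau_j$ is already $\Lambda_B$-invariant and descends with no correction; this is exactly the paper's proof. Your claim that the transition functions have a nontrivial linear part rests on a misidentification of the extension class: a semiabelian variety is a multiplicative extension, classified by $\operatorname{Ext}^1(B,\mathbb{G}_m^k)\cong\Pic^0(B)^k$, where constant factors of automorphy suffice; $H^1(B,\O_B)^{\oplus k}$ with linear cocycles classifies vector extensions by $\mathbb{G}_a^k$, which is not the situation here.

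Moreover, the proposed repair would not work even if the obstruction were a nonzero constant form: a constant $1$-form pulled back along $\widetilde{p}$ is itself invariant under translation by $\Lambda_B$ (its cocycle is zero), so it is not ``semi-invariant with exactly the needed cocycle,'' and subtracting it cannot restore invariance; killing a genuinely nonzero constant cocycle would force a form with linear coefficients, which would change $[\operatorname{Re}\sigma_j]$ and spoil the logarithmicity discussion. (Also, a homomorphism $\Lambda_B\to V_B^*\o\C$ extends only $\R$-linearly over $(V_B)_{\R}$, not $\C$-linearly in general.) Since the obstruction in fact vanishes, your $\sigma_j$ agrees with the paper's, but the argument as written is incorrect. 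Finally, your justification of logarithmicity (``the compactification of $A$ can be chosen compatibly'') is too loose: $\widehat{\pi}$ is an infinite cover and does not extend to compactifications, and $\widehat{A}$ carries no canonical algebraic structure; the paper handles this by lifting small punctured neighbourhoods of boundary points of $\overline{A}$ to $\widehat{A}$, where the covering is trivial, and checking the pole order there.
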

\begin{proof}
Under the identification $\widehat{A} \xrightarrow{\sim} V_B \times T$ the action of $\pi_1(B)=\Lambda_B\subset V_B$ is given by
\[
\lambda \cdot (v, t) = \left ( v+\lambda, \frac{t}{\widehat{r}(v+\lambda)} \right ),
\]
where $v \in V_B, \ t \in T$. 

Every form $\widehat{\tau}_j \in H^0(\widehat{A}, \Omega^1_{\widehat{A}})$ comes as a pullback of  a $T$-invariant form $\tau_j$ on $T$, therefore it is invariant with respect to the action of $\pi_1(B)$ and descends to a holomorphic differential $1$-form $\sigma_j$ $A$. 

We need to verify that the forms $\sigma_j$ have logarithmic forms at infinity. Take a smooth projective compactification $A \hookrightarrow \overline{A}$  with a simple normal crossing boundary divisor $D:=\overline{A} \setminus A$. Let $U\subset \overline{A}$ be a small semialgebraic neighbourhood of a point on $D$ and $U^{\circ}:=U \cap A$. Shrinking $U$ if necessary, one may assume that the restriction of the projection $p \colon A \to B$ induces a trivial homomorphism on the fundamental groups $(p|_{U^{\circ}})_{*} \colon \pi_1(U^{\circ}) \to \pi_1(B)$. This allows one to choose a lifting $\widehat{U}^{\circ}$ of $U^{\circ}$ on $\widehat{A}$ such that $\widehat{\pi} \colon \widehat{U}^{\circ} \to U^{\circ}$ is a biholomorphism. Notice that,

\[
\widehat{\pi}^*(\sigma_j|_{U^{\circ}})=\widehat{\tau_j}|_{\widehat{U}^{\circ}}
\]

and the biholomorphism $\widehat{\pi} \colon \widehat{U}^{\circ} \to U^{\circ}$ is semialgebraic with respect to the algebraic structure on $A$ and the algebraic structure on $\widehat{A}$ determined by $\widehat{s}$. 

The fact that $\sigma_j$ has at most logarithmic poles on $A$ follows from the corresponding fact for $\widehat{\tau}_j$ on $\widehat{A}$.

For the second assertion, observe that the sequence
\[
0 \to H^1(B, \R) \xrightarrow{p^*} H^1(A, \R) \xrightarrow{\widehat{\pi}^*} H^1(\widehat{A}, \R) \to 0
\]

is exact. 

The span of $[\operatorname{Re}(\sigma_j)]$ inside $H^1(A, \R)$ is a linear subspace of dimension $k=\dim_{\C} T$ inside $H^1(A, \R)$ that intersects by zero with $p^*(H^1(B, \R))$. Therefore,
\[
H^1(A, \R)=p^*(H^1(B, \R)) \oplus \langle [\operatorname{Re} \sigma_1], \ldots, [\operatorname{Re}\sigma_k] \rangle
\]
as required.

\end{proof}

We deduce Theorem \ref{beta theorem} in the case of semiabelian varieties.

\begin{lemma}\label{beta semiabelian}
Let $A$ be a semiabelian variety and $\chi \in H^1(A, \R)$.  Let $\Psi=\Psi_{A, \widetilde{s}}$ be a standard prism. There exists a logarithmic holomorphic $1$-form $\omega$ on $A$ such that $(\Psi, \operatorname{Re} \omega)$ is a taming pair for $\chi$.
\end{lemma}
\begin{proof}
We use the notations of the subsections \ref{prisms} and \ref{semiabelian forms}. 

By Proposition \ref{forms on semiabelian}, the class $\chi$ can be written as
\[
\chi=p^*\chi_B + \sum_j a_j [\operatorname{Re}\sigma_j],
\]
where $\chi_B\in H^1(B, \R)$. Choose a holomorphic $1$-form $\omega_B$ on $B$ such that $[\operatorname{Re}\omega_B]=\chi_B$. The real part of the holomorphic logarithmic $1$-form $\omega=p^*\omega_B+\sum_j a_j \sigma_j$ represents the class $\chi$. Let $\alpha:= \operatorname{Re} \omega$. We have $\pi^*\alpha=dh$ for a function $h \colon V_A \to \R$. Recall that $V_A$ is endowed with a canonical projection $V_A \xrightarrow{\widetilde{p}}V_B$ and a projection $V_A \xrightarrow{\widetilde{q}}V_T$ constructed out of the section $\widetilde{s} \colon V_B \to V_A$. This determines a splitting of the fundamental domain $\Psi \xrightarrow{\Theta_{\widetilde{s}}=\widetilde{q} \times \widetilde{p}} \Pi \times \Xi_B$.

The function $h$ can be written as 
\[
h=\widetilde{p}^*h_B+\sum\widetilde{q}^*a_jx_j,
\]
where $dh_B=\pi_B^{-1}(\operatorname{Re}\omega_B)$ on $V_B$ and $x_j=\operatorname{Re}(z_j)$ are the real parts or the standard coordinates on $V_T$ as before. The function $h_B$ is bounded on $\Xi_B$, because $\Xi_B$ is a bounded subset of $V_B$. The functions $a_jx_j$ are bounded on $\Pi$. Hence $h|_{\Psi}$ is bounded.
\end{proof}

\subsection{The general case}\label{general beta}

\begin{proof}[Proof of Theorem \ref{beta theorem}]
Let $X$ be a smooth quasi-projective variety and $\chi$ a nonzero class in $H^1(X, \R)$. By Theorem \ref{albanese exists} one can write $\chi=\alb^*\theta$ for some $\theta \in H^1(\Alb(X), \R)$. Let $\pi_{\Alb} \colon \C^m=\widetilde{\Alb(X)} \to \Alb(X)$ be the universal cover of the Albanese variety of $X$ and $\widetilde{\alb} \colon \tX \to \C^m$ a lifting of the Albanese morphism.
\[
\xymatrix{
\tX \ar[d]_{\pi_X} \ar[r]^{\widetilde{\alb}}& \C^m \ar[d]^{\pi_{\Alb}}\\
X \ar[r]^{\alb}& \Alb(X)
}
\]

Let $\Psi \subset \C^m$ be the standard prism for $\Alb(X)$ (see subsection \ref{prisms}). By Lemma \ref{beta semiabelian} there exists a closed holomorphic logarithmic $1$-form $\omega$ on $\Alb(X)$ such that $\alpha:=\operatorname{Re} \omega$ represents $\theta$ and $(\Psi, \alpha)$ is a taming pair for $\theta$ on $\Alb(X)$. 

Choose any tame fundamental domain $\Phi \subset \tX$. Composing $\alb$ with a translation on $\Alb(X)$ we may assume that $\alb(\pi_{X}(\Phi)) \cap \pi_{\Alb}(\Psi)$ is non-empty.
 
Then $(\Phi, \alb^*\alpha=\operatorname{Re}(\alb^*\omega))$ is a taming pair for $\chi$ on $X$ by Corollary \ref{taming algebraic pullbacks}.
\end{proof}

\section{The Simpson's Lefschetz Theorem for integral leaves of $1$-forms}\label{sl section}

\subsection{The classical case}
Let $X$ be a smooth algebraic variety and $0 \neq \chi \in H^1(X, \mathbb{K})$, where $\mathbb{K}=\R$ or $\C$. Let $\chi_0 \in H^1(\Alb(X), \mathbb{K})$ be such that $\chi=\alb^*\chi_0$. Let $B \subseteq \Alb(X)$ be the maximal connected closed subgroup such that $\chi_0|_{B} \equiv 0$ and take the quotient $\Alb_{\chi}:=\Alb(X)/B$. Then $\chi_0$ descends to a class in $H^1(A_{\chi}, \mathbb{K})$. The composition
\[
\alb_{\chi} \colon X \to \Alb(X) \to \Alb_{\chi}(X)
\]
is called the \emph{Albanese map associated with $\chi$}.

Delzant's proof of Theorem \ref{delzant bns} is based on the combination of Theorem \ref{topological bns} and the following result of Simpson \cite{Simp}.

Recall that a pair of topological spaces $V \subset U$ is called \emph{$k$-connected} if $\pi_i(V) \to \pi_i(U)$ is an isomorphism for $i<k$ and a surjection for $i=k$.

\begin{thrm}[Simpson's Lefschetz theorem for integral leaves of $1$-forms]\label{simpson lefschetz classical}

Let $X$ be a smooth projective variety or a compact K\"ahler manifold and $\chi \in H^1(X, \C)$ a nonzero class. Denote by $\psi \colon X \to A$  the associated Albanese map. Let $\widetilde{A} \to A$ be the universal cover of $A_{\chi}$ and $Z:=X \times_{A_{\chi}} \widetilde{A}$. The latter is a connected cover of $X$ endowed with a projection $\pi \colon Z \to X$. Let $\omega$ be a holomorphic $1$-form with logarithmic poles at infinity such that $\chi=[\omega]$, so that $\pi^*\omega$ is exact and can be written as $\pi^*\omega=dg$, where $g \colon Z \to \C$ is a holomorphic function. Denote $h:=\operatorname{Re}(g)$.

Assume that $\dim \psi(X) \ge 2$.

The the following holds:
\begin{itemize}
\item[(i)] the pair $(Z, g^{-1}(t))$ is $1$-connected for every $t \in \C$;
\item[(ii)] the pair $(Z,h^{-1}(t))$ is $1$-connected for every $t \in \R$.
\end{itemize}
\end{thrm}

\begin{rmk}
The classical Lefschetz Hyperplane Section theorem concerns the topology of the pair $(X, H \cap X)$, where $X$ is either a projective or an affine variety and $H$ is a hyperplane inside $\mathbb{P}^n_{\C}$ or $\C^n$ respectively. Simpson's Lefschetz theorem describes the topology of the pair $(Z, \widetilde{\psi}^{-1}(H))$, where $H$ is a hyperplane in the affine space $\widetilde{A} \simeq \C^d$ and $\widetilde{\psi} \colon Z \to \widetilde{A}$ is the lift of $\psi \colon X \to A_{\chi}$. This motivates not only the name but also the proof, which is essentially inspired by Andreotti-Frankel's Morse-theoretic approach to the classical Lefschetz theorem. 
\end{rmk}

\subsection{The quasi-projective case}
For our goals, we need a quasi-projective version of Theorem \ref{simpson lefschetz classical}. Several versions of such a generalisation are already known: Corlette and Simpson showed it under the assumptions that $X$ is quasi-projective and $\omega$ extends to a projective compactification $X \hookrightarrow \overline{X}$ (\cite[Theorem 5.11]{CS}, see also \cite[Proof of Proposition 11.12.]{BBT24}); Rodr\'igues-Guzm\'an proved a similar result under the assumptions that the polar divisor of $\omega$ is ample (\cite[Theorem 2.2]{RG}).

We prove the following version of Simpson's Lefschetz theorem for quasi-projective varieties.

\begin{thrm}\label{simpson lefschetz qp}
Let $X$ be a smooth quasi-projective variety and $\chi \in H^1(X, \C)$ a nonzero class. Denote by $\psi \colon X \to A$  the associated Albanese map. Let $\widetilde{A} \to A$ be the universal cover of $A_{\chi}$ and $Z:=X \times_{A_{\chi}} \widetilde{A}$. The latter is a connected cover of $X$ endowed with a projection $\pi \colon Z \to X$. Let $\omega$ be a holomorphic $1$-form with logarithmic poles at infinity such that $\chi=[\omega]$, so that $\pi^*\omega$ is exact and can be written as $\pi^*\omega=dg$, where $g \colon Z \to \C$ is a holomorphic function. Denote $h:=\operatorname{Re}(g)$.

Assume that $\dim \psi(X) \ge 2$.

The there exists at most countable set $\mathcal{P}=\mathcal{P}(X, \chi) \subseteq \C$ such that the following holds:
\begin{itemize}
\item[(i)] the pair $(Z, g^{-1}(v))$ is $1$-connected for every $v \in \C\setminus \mathcal{P}$;
\item[(ii)] the pair $(Z,h^{-1}(v))$ is $1$-connected for every $v \in \R\setminus \operatorname{Re}\mathcal{P}$.
\end{itemize}
(here $\operatorname{Re}\mathcal{P}=\{\operatorname{Re} p \ | p \in \mathcal{P}\}$)
\end{thrm}

The na\"ive version of Theorem \ref{simpson lefschetz classical} can not be true as is demosntrated by the following example (cf. \cite[Example 3.4.]{RG}).

\begin{ex}
Let $S$ be a smooth projective surface and $f \colon S \to E$ a surjective fibration over an elliptic curve with the following property: there exists a point $p \in E$ such that $F_p:=f^{-1}(p)$ consists of two curves intersecting in one point. To construct such a surface, one can start with an elliptic pencil on a K3-surface and perform a base change along a finite map $E \to \CP^1$.

Let $s$ be the singular point of $F_p$, so that $F_p \setminus s$ is disconnected.Let $X:=S \setminus \{s\}$ and $\omega \in H^1(E, \Omega^1)$ a nonzero holomorphic $1$-form.

Set $f^{\circ}:=f|_X$ and $\chi:=[(f^{\circ})^*\omega]$. Then $\chi$ is a nonzero class on $H^1(X, \C)$ and the associated Albanese moprhism is just $f^{\circ} \colon X \to E$. The function $g \colon Z \to \C$ coincides with the lift $\widetilde{f^{\circ}} \colon Z \to \widetilde{E}=\C$. For any $q \in \C$ that is mapped to $p \in E$ the fibre $g^{-1}(q)$ is biholomorphic to $(f^{\circ})^{-1}(p)=F_p \setminus \{s\}$, thus disconnected. At the same time, $Z$ is connected, so the pair $(Z, g^{-1}(q))$ is not connected. 
\end{ex}

The proof of Theorem \ref{simpson lefschetz qp} generally follows the ideas of \cite{Simp} and \cite{CS}, but with several additional modifications. We give the proof in Appendix \ref{simpson lefschetz proof}.

\subsection{An application of Simpson's Lefschetz theorem}

\begin{cor}\label{simpson lefschetz to bns}
Let $X$ be a smooth quasi-projective variety, $\chi \in H^1(X, \R)$ a nonzero class and $\omega$ a holomorphic logarithmic $1$-form on $X$ such that $[\operatorname{Re} \omega]=\chi$. Let $\pi \colon \tX \to X$ be the universal cover and $g \colon \tX \to \C$ a holomorphic function such that $dg = \pi^*\omega$. Put $h:=\operatorname{Re} g$. 
 Suppose that the dimension of the image of the Albanese morphism associated with $\chi$ is at least $2$. Then $\tX_h:=h^{-1}(\R_{>0}) \subseteq \tX$ is connected.
\end{cor}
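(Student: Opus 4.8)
\textbf{Proof proposal for Corollary \ref{simpson lefschetz to bns}.}

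The plan is to deduce the statement from Theorem \ref{simpson lefschetz qp}, which handles the cover $Z = X \times_{A_{\chi}} \widetilde{A_{\chi}}$ rather than the full universal cover $\tX$. The first thing to note is that the universal cover $\pi \colon \tX \to X$ factors through the intermediate cover $p \colon Z \to X$, say $\tX \xrightarrow{q} Z \xrightarrow{p} X$, and $q$ is the universal cover of $Z$. Since the class $\chi$ pulls back from $\widetilde{A_{\chi}}$ (where it is exact), the function $g$ on $\tX$ with $dg = \pi^*\alpha$ descends to a holomorphic function $g_Z \colon Z \to \C$ with $dg_Z = p^*\alpha$, and $g = q^* g_Z$. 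Likewise $h = \operatorname{Re} g = q^* h_Z$ where $h_Z := \operatorname{Re} g_Z$.

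Next I would apply Theorem \ref{simpson lefschetz qp}. By hypothesis $\chi$ is non-extensible and $\dim \alb_{\chi}(X) \ge 2$, so the theorem gives that for every $v \in \C$ the fibre $g_Z^{-1}(v)$ is connected and $\pi_1(g_Z^{-1}(v)) \to \pi_1(Z)$ is surjective. I would use part (ii) of Theorem \ref{simpson lefschetz qp main} (the harmonic/real-part version, applied with $\alpha = \operatorname{Re}\omega$): for every $t \in \R$ the level set $h_Z^{-1}(t)$ is connected and $\pi_1(h_Z^{-1}(t)) \to \pi_1(Z)$ is surjective. In particular, fixing some $t_0 > 0$, the set $h_Z^{-1}(t_0)$ is connected and $\pi_1$-surjective in $Z$.

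Now I want to pass from the single level set to the open set $Z_{h_Z} := h_Z^{-1}(\R_{>0})$ and then lift to $\tX$. The set $h_Z^{-1}(\R_{>0})$ deformation retracts onto $h_Z^{-1}(t_0)$ — or at least is connected and the inclusion $h_Z^{-1}(t_0) \hookrightarrow h_Z^{-1}(\R_{>0})$ is $\pi_1$-surjective — because $h_Z$ is a pluriharmonic (hence, away from its critical points, submersive) function and one can flow along $\nabla h_Z$; more robustly, one invokes the same Morse-theoretic/stratified-Morse input underlying Theorem \ref{simpson lefschetz qp} to see that $h_Z^{-1}(\R_{>0})$ has the homotopy type obtained from $h_Z^{-1}(t_0)$ by attaching cells of index $\ge 1$, so $\pi_1(h_Z^{-1}(t_0)) \twoheadrightarrow \pi_1(h_Z^{-1}(\R_{>0}))$. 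Composing, $\pi_1(h_Z^{-1}(\R_{>0})) \to \pi_1(Z)$ is surjective. Finally, $\tX_h = h^{-1}(\R_{>0}) = q^{-1}(Z_{h_Z})$, and the connected components of the preimage of a connected set $Z_{h_Z}$ under the universal covering $q$ are in bijection with $\pi_1(Z)/\operatorname{im}(\pi_1(Z_{h_Z}) \to \pi_1(Z))$; since that image is all of $\pi_1(Z)$, the preimage is connected. Hence $\tX_h$ is connected.

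The main obstacle I anticipate is the second paragraph's step — passing from the single fibre $h_Z^{-1}(t_0)$ to the half-space $h_Z^{-1}(\R_{>0})$ with control on $\pi_1$. The function $h_Z$ is only pluriharmonic, not proper, so a naive gradient-flow argument needs care about escaping to infinity and about critical points; the clean way is to observe that this is exactly the content extracted inside the proof of Theorem \ref{simpson lefschetz qp} (the Lefschetz-type statement is proved precisely by comparing sublevel/superlevel sets), so I would either cite the relevant intermediate step of the Appendix proof or re-run the Andreotti–Frankel–style argument on $Z$, using the properness of $\alb_{\chi}$ (Proposition \ref{non-ext proper}) to guarantee the Morse theory behaves well. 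Everything else is formal covering-space theory.
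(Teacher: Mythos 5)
Your skeleton is the same as the paper's: descend $g$ and $h$ to $g_Z, h_Z$ on $Z=X\times_{A_\chi}\widetilde{A_\chi}$, invoke Theorem \ref{simpson lefschetz qp}, and then identify the components of $\tX_h=q^{-1}(Z_h)$ (where $Z_h:=h_Z^{-1}(\R_{>0})$) with $\pi_1(Z)/\im[\pi_1(Z_h)\to\pi_1(Z)]$, using that $\tX\to Z$ is the universal cover of $Z$. That part is exactly what the paper does.

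The step you flag as the main obstacle, however, is both unjustified as written and unnecessary. You do not need $h_Z^{-1}(\R_{>0})$ to deformation retract onto $h_Z^{-1}(t_0)$, nor the inclusion $h_Z^{-1}(t_0)\hookrightarrow h_Z^{-1}(\R_{>0})$ to be $\pi_1$-surjective; and the ``attach cells of index $\ge 1$'' claim for the non-proper pluriharmonic function $h_Z$ is not something Theorem \ref{simpson lefschetz qp} hands you -- re-proving it would mean redoing the Appendix analysis, so as stated this is a gap. The paper's argument sidesteps it: fix $t_0>0$ and look at the maps induced by the inclusions $h_Z^{-1}(t_0)\subseteq Z_h\subseteq Z$. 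The composite $\pi_1(h_Z^{-1}(t_0))\to\pi_1(Z_h)\to\pi_1(Z)$ is surjective by Theorem \ref{simpson lefschetz qp}, hence the second map $\pi_1(Z_h)\to\pi_1(Z)$ is surjective, with no control whatsoever on the first map. Similarly, connectivity of $Z_h$ is not obtained from a retraction but from the connectivity of \emph{all} fibres $h_Z^{-1}(t)$, $t>0$: each fibre lies in a single component of $Z_h$, and since $h_Z$ is a non-constant pluriharmonic (hence open) function, a splitting of $Z_h$ into two nonempty open pieces would split the connected set $h_Z(Z_h)$ into two disjoint nonempty open subsets. With these two replacements your covering-space conclusion goes through verbatim; in short, delete the Morse-theoretic detour and your proof coincides with the paper's.
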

\begin{proof}
As before, denote the  Albanese morphism associated with $\chi$ by $\psi \colon X \to A$, the universal cover of $A$ by  $\pi_A \colon \widetilde{A} \to A$ and by $Z:= X \times_{A} \widetilde{A}$ the associated cover of $X$. Let $\pi_Z \colon Z \to X$ be the natural projection. The universal covering map $\pi \colon \tX \to X$ factorises through $Z$ and we get a diagram
\[
\xymatrix{
\tX \ar[d]_{\pi'} \ar@/_2pc/[dd]_{\pi}& \\
Z \ar[d]^{\pi_Z} \ar[r]^{\widetilde{\psi}} & \widetilde{A_{\chi}} \ar[d]^{\pi_A}\\
X \ar[r]^{\psi}& A_{\chi}
}
\]
with $\pi'$ and $\pi_Z$ being connected Galois covers. Observe that $\tX \xrightarrow{\pi'} Z$ is also the universal cover of $Z$. The function $h$ pullbacks as $h=(\pi')^*h_Z$ for some function $h_Z \colon Z \to \R$. Moreover, $dh_Z=\operatorname{Re}(\pi_Z^*\omega)$. 

Set $Z_h:= h_Z^{-1}(\R_{>0})$. Then $\tX_h=(\pi')^{-1}(Z_h)$. To prove that $\tX_h$ is connected is sufficient to show that the pair pair $(Z, Z_h)$ is $1$-connected.

First, we show that $Z_h$ is path-connected. Let $z_0$ and $z_1$ be two points in $Z_h$ and $\gamma \colon [0 ;1] \to Z$ a path with $\gamma(0)=z_0$ and $\gamma(1)=z_1$ (recall that $Z$ is path-connected because $\pi_1(X) \to \pi_1(A_{\chi})$ is surjective). The function $h_{\gamma} \colon t \mapsto h(\gamma(t))$ is a continuous function on $[0;1]$ that is positive on the ends. Assume that $h_{\gamma}$ is positive everywhere. Then the image of $\gamma$ is contained in $Z_h$ and we are done.

Otherwise, take $c \in \R$ such that $c \in \R \setminus \operatorname{Re} \mathcal{P}$ and $c< \min (h(z_0), h(z_1))$. Then the closed set $\{t \ | h_{\gamma}(t)=c\}$ is not empty. Let $t_1$ and $t_2$ be the first and the last times when $\gamma(t)$ intersects $h^{-1}(c)$ (that is, the minimum and the maximum of $h_{\gamma}^{-1}(c)$). The paths $\gamma|_{[0;t_1]}$ and $\gamma|_{[t_2;1]}$ are contained in $Z_h$. The points $\gamma(t_1)$ and $\gamma(t_2)$ lie on the fibre $h^{-1}(c)$ which is path-connected by Theorem \ref{simpson lefschetz qp}. Therefore, $Z_h$ is path-connected.

Take any $v \in \R \setminus \operatorname{Re}\mathcal{P}$. Then $\pi_1(h^{-1}(v)) \to \pi_1(Z)$ is surjective by Theorem \ref{simpson lefschetz qp}. This map factorises through $\pi_1(Z_h)$, hence $\pi_1(Z_h) \to \pi_1(Z)$ is surjective and $(Z, Z_h)$ is $1$-connected.
\end{proof}

\section{The BNS set of the fundamental group of a quasi-projective variety}\label{proof of A}

\subsection{Quasi-projective orbicurves}

In this section, we prove the quasi-projective analogue of Delzant's Theorem \ref{delzant bns}.

By a \emph{quasi-projective orbicurve} we understand a triple $\mathcal{C} =(C, P, m)$, where $C$ is a smooth quasi-projective curve (\emph{the coarse moduli space}), $P:=\{p_1, \ldots, p_d\} \subset C$ a finite set of points and $m \colon P \to \N_{>1}$ is \emph{the multiplicity function}.

A \emph{holomorhic map in the orbifold sense} from a complex manifold $X$ to a quasi-projective orbicurve $\mathcal{C} = (C, P, m)$ is a holomorphic map to its coarse moduli space $f \colon X \to C$,such that for every $p \in  P$ and every $x_0 \in f^{-1}(p)$ there exist a small disk $\Delta \subset C$ centered at $p$ and a small ball $B \subset X$ centered at $x_0$ with  $f(B) \subseteq \Delta$ and $f(x)|_B=g(x)^{m(p)}$ for some holomorphic function $g \colon B \to \Delta$. An \emph{algebraic fibration} of a quasi-projective variety $X$ over a quasi-projective orbicurve $\mathcal{C}=(C, P, m)$ is a surjective algebraic map with connected general fibre $f \colon X \to \mathcal{C}$, which is holomorphic in the orbifold sense\footnote{Notice that we do not require such a map to be proper! }.

Let $\mathcal{C}=(C, p, m)$ be a quasi-projective orbicurve. Choose a point $x_0 \in C \setminus P$ and for each $p \in P$ fix a homotopy class of a small loop $\gamma_p \in \pi_1(C \setminus P; x_0)$ that goes around $p$. The obrifold fundamental group is defined as
\[
\pi_1^{\orb}(\mathcal{C};x_0):=\pi_1(C \setminus P;x_0) / \langle \langle \gamma_p^{m(p)} \rangle \rangle,
\]
where $\langle \langle \gamma_p^{m(p)} \rangle \rangle$ denotes the subgroup normally generated by  all $\gamma_p^{m(p)}, \ p \in P$.

Notice that there is a natural epimorphism $\pi_1^{\orb}(\mathcal{C}) \twoheadrightarrow \pi_1(C)$ . Its kernel is generated by torsion-elements, therefore it induces an isomorphism 
\[
H^1(\pi_1(C), \R) \xrightarrow{\sim} H^1(\pi_1^{\orb}(\mathcal{C}), \R).
\]

The following Lemma is \cite[Theorem 2.10]{Py} in the case where $X$ and $C$ are projective. The proof for the quasi-projective case is similar.

\begin{lemma}
Let $X$ be a smooth quasi-projective variety.
\begin{itemize}
\item[(i)] Let $\mathcal{C}$ be a hyperbolic orbicurve and $f \colon X \to \mathcal{C}$ and algebraic fibration. The  homomorphism $f_* \colon \pi_1(X) \to \pi_1(C)$ lifts to a surjective homomorphism 
\[
\pi_1(X) \to \pi_1^{\orb}(\mathcal{C}).
\]
\item[(ii)] Let $C$ be a smooth quasi-projective curve and $f \colon X \to C$ an algebraic fibration. Then there exists a unique quasi-projective orbicurve $\mathcal{C}=(C, P, m)$ with the coarse moduli space $C$ such that $f \colon X \to \mathcal{C}$ is holomorphic in the orbifold sense. Such orbicurve $\mathcal{C}$ is called \emph{the obrifold base of the fibration $f$} 
\end{itemize}
\end{lemma}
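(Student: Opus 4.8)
The plan is to deduce both statements from the classical theory of orbifold fibrations over projective curves by passing to smooth projective compactifications and checking that the relevant structures extend. For part (i), I would start by choosing a smooth projective compactification $\overline{X} \hookleftarrow X$ with simple normal crossing boundary, and a smooth projective curve $\overline{C}$ compactifying $C$, such that the fibration $f \colon X \to C$ extends to a morphism $\overline{f} \colon \overline{X} \to \overline{C}$ (after possibly blowing up $\overline{X}$; the existence follows from resolution of indeterminacy and the fact that a rational map to a curve from a smooth variety is a morphism once the target is complete, combined with Stein factorisation to keep the fibres connected). The multiple fibres of $\overline{f}$ over $\overline{C}$, together with the orbifold points coming from $\mathcal{C}$, define a projective orbicurve $\overline{\mathcal{C}}$ containing $\mathcal{C}$ as an open suborbifold, and $\overline{f} \colon \overline{X} \to \overline{\mathcal{C}}$ is holomorphic in the orbifold sense. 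By the projective case (\cite{Py}, Theorem 2.10), the map $\overline{f}_* \colon \pi_1(\overline{X}) \to \pi_1(\overline{C})$ lifts to a surjection $\pi_1(\overline{X}) \to \pi_1^{\orb}(\overline{\mathcal{C}})$. I would then pull this back along the surjection $\pi_1(X) \twoheadrightarrow \pi_1(\overline{X})$ (surjective because $\overline{X} \setminus X$ has real codimension two) and the surjection $\pi_1^{\orb}(\mathcal{C}) \twoheadrightarrow \pi_1^{\orb}(\overline{\mathcal{C}})$ (induced by the open inclusion of orbicurves), and check compatibility: the key point is that a small loop in $X$ around a component of $f^{-1}(p)$ with local multiplicity $m(p)$ maps under $f_*$ to the $m(p)$-th power of a small loop in $C$ around $p$, which is exactly the relation imposed in $\pi_1^{\orb}(\mathcal{C})$. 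This gives the desired lift $\pi_1(X) \to \pi_1^{\orb}(\mathcal{C})$, and surjectivity is inherited from the projective case together with the fact that $\pi_1^{\orb}(\mathcal{C}) \to \pi_1^{\orb}(\overline{\mathcal{C}})$ is an isomorphism after killing the extra boundary loops, so one can lift generators explicitly.

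For part (ii), given an algebraic fibration $f \colon X \to C$ to a smooth quasi-projective curve, I would define $P \subset C$ to be the (finite) set of points $p$ over which the scheme-theoretic fibre $f^*(p)$ is non-reduced, i.e. a multiple fibre, and set $m(p)$ to be the multiplicity of $f^*(p)$, that is, the greatest common divisor of the multiplicities of its irreducible components (one must first discard the components of $f^{-1}(p)$ that have codimension $\geq 2$ in $X$, which do not affect the generic local picture). Finiteness of $P$ follows from generic smoothness: $f$ is smooth over a dense open subset of $C$, so only finitely many fibres can be multiple. With this choice, holomorphicity in the orbifold sense is a local computation near a point $x_0 \in f^{-1}(p)$: writing the fibre locally, $f$ factors as a power map composed with a function whose differential is nonzero, which is precisely the required local form $f|_B = g^{m(p)}$; the only subtlety is ensuring $g$ is genuinely holomorphic with connected fibres, which I would arrange by taking the normalisation of the base in the appropriate cyclic cover, or equivalently by an elementary local argument on the structure of the fibre near a smooth point of a reduced component. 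Uniqueness of $\mathcal{C}$ is immediate: the orbifold structure at $p$ is forced by the local multiplicity of the fibre, which is intrinsic to $f$.

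I expect the main obstacle to be the careful bookkeeping in part (i) around the interaction between the boundary loops of $X$ and the orbifold relations, i.e. making sure that the lift $\pi_1(X) \to \pi_1^{\orb}(\mathcal{C})$ is well-defined and compatible with $f_*$ when one has both genuine orbifold points and boundary-at-infinity phenomena simultaneously; the projective statement handles the former and the codimension-two argument handles the latter, but verifying that these two simplifications can be applied independently — that the multiple fibres of $\overline{f}$ lying over $\overline{C} \setminus C$ do not create spurious torsion in $\pi_1^{\orb}(\overline{\mathcal{C}})$ that obstructs the lift — requires some care. Everything else is a routine transposition of the projective arguments in \cite{Py}, as the lemma statement itself indicates.
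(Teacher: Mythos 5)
The paper offers no written proof here: it cites Py's Theorem 2.10 for the projective case and asserts that the same argument transposes verbatim to the quasi-projective setting. That intended argument is direct: set $X':=f^{-1}(C\setminus P)$, note that $\pi_1(X')\to\pi_1(C\setminus P)$ is surjective because $f$ is surjective with connected fibres, compose with the quotient $\pi_1(C\setminus P)\twoheadrightarrow\pi_1^{\orb}(\mathcal{C})$, and then check that this composite kills the kernel of $\pi_1(X')\twoheadrightarrow\pi_1(X)$, which is normally generated by meridians of the components of the fibres over $P$; a meridian of a component of multiplicity $k$ maps to $\gamma_p^{k}$ with $m(p)\mid k$, hence dies in $\pi_1^{\orb}(\mathcal{C})$. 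You actually record exactly this meridian computation, but you do not use it in that direct way, and the framework you put around it does not work.

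The genuine gap is in your part (i). Your reduction to the projective case produces, at best, a surjection $\pi_1(\overline{X})\to\pi_1^{\orb}(\overline{\mathcal{C}})$, and hence (via $\pi_1(X)\twoheadrightarrow\pi_1(\overline{X})$) a surjection of $\pi_1(X)$ onto $\pi_1^{\orb}(\overline{\mathcal{C}})$. But $\pi_1^{\orb}(\overline{\mathcal{C}})$ is a proper \emph{quotient} of $\pi_1^{\orb}(\mathcal{C})$: the natural map goes $\pi_1^{\orb}(\mathcal{C})\twoheadrightarrow\pi_1^{\orb}(\overline{\mathcal{C}})$, killing the loops around the punctures $\overline{C}\setminus C$. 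You cannot "pull back along" that surjection to manufacture a homomorphism into $\pi_1^{\orb}(\mathcal{C})$, and surjectivity onto the quotient says nothing about surjectivity onto $\pi_1^{\orb}(\mathcal{C})$. The loss is real, not a bookkeeping issue: for $C=\C\setminus\{0,1\}$ with trivial orbifold structure one has $\pi_1^{\orb}(\mathcal{C})=F_2$ while $\pi_1(\overline{C})=\pi_1(\mathbb{P}^1)=1$, so the compactified statement is vacuous and the entire content of the lemma (which lives in the boundary loops) is thrown away. The fix is precisely the direct argument above, carried out on $X\to C$ itself; compactification is neither needed nor helpful. Your part (ii) is essentially correct in spirit, with two small corrections: $P$ must be the set of $p$ for which the gcd of the multiplicities of the components of $f^{*}(p)$ is at least $2$ (a non-reduced fibre with component multiplicities $1$ and $2$ has gcd $1$ and contributes no orbifold point, since $m\colon P\to\N_{>1}$), there are no components of $f^{-1}(p)$ of codimension $\ge 2$ to discard (fibres of a morphism to a smooth curve are Cartier divisors), and the local root $g$ with $f|_B=g^{m(p)}$ is obtained by extracting an $m(p)$-th root of a unit on a simply connected neighbourhood together with divisibility of the component multiplicities; it need not have nonvanishing differential.
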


An orbicurve $\mathcal{C}=(C, P, m)$ is called \emph{hyperbolic} if its \emph{orbifold Euler characteristic}

\[
\chi_{\orb}(\mathcal{C}):= \chi_{\operatorname{top}}(C)-\sum_{p \in P} \left(1-\frac{1}{m(p)} \right)
\]
is negative.

\begin{lemma}[Behrend - Noohi, \cite{BN}, Proposition 5.3.]\label{uniformisation}
Let $\mathcal{C}=(C, m, p)$ be a hyperbolic orbicurve. Then there exists a finitely generated discrete subgroup $\Delta \subset \operatorname{PSL}_2(\R)$ which acts on the upper half-plane $\mathfrak{H}$ properly discontinuous and with finite stabilisers and such that $\mathcal{C}$ is isomorphic to $[\Delta \setminus \mathfrak{H}]$, where the quotient is taken in the orbifold sense. In particular, $\pi_1^{\orb}(\mathcal{C}) \simeq \Delta$.
\end{lemma}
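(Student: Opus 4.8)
The plan is to reduce the statement to the classical uniformisation theorem for Riemann surfaces by passing to a finite manifold cover. Write $C = \overline{C} \setminus \{q_1, \dots, q_s\}$ with $\overline{C}$ a smooth projective curve of genus $g$, and let $m_1, \dots, m_d$ be the multiplicities attached to the points $p_1, \dots, p_d \in C$. The orbifold fundamental group has the standard presentation with generators $a_1, b_1, \dots, a_g, b_g$, loops $c_1, \dots, c_d$ around the orbifold points and loops $\ell_1, \dots, \ell_s$ around the punctures, subject to $c_j^{m_j}=1$ and $\prod_i [a_i,b_i]\,\prod_j c_j\,\prod_k \ell_k = 1$; in particular $\pi_1^{\orb}(\mathcal{C})$ is finitely presented, and the hyperbolicity hypothesis reads $\chi_{\orb}(\mathcal{C}) < 0$.

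First I would invoke the theorem of Fox (the Bundgaard--Nielsen theorem in the compact case, and its routine extension to the open case) that every hyperbolic orbicurve admits a finite Galois orbifold covering $\rho \colon C' \to \mathcal{C}$ whose total space is a \emph{manifold}, i.e.\ $C'$ is an honest smooth quasi-projective curve; write $G := \operatorname{Deck}(C'/\mathcal{C})$ for the finite deck group, so that $\mathcal{C} \cong [C'/G]$ as orbicurves. Since $\chi_{\orb}(\mathcal{C}) < 0$ forces $\chi_{\operatorname{top}}(C') < 0$, the classical uniformisation theorem presents $C'$ as $\Gamma' \backslash \mathfrak{H}$ for a torsion-free finitely generated discrete subgroup $\Gamma' \subset \operatorname{PSL}_2(\R)$, the projection $\mathfrak{H} \to C'$ being the universal cover with deck group $\Gamma' \cong \pi_1(C')$; in the open case the complete hyperbolic metric in the correct conformal class realises each puncture of $C'$ as a cusp, so that $\Gamma' \backslash \mathfrak{H}$ is biholomorphic to $C'$ on the nose.

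Next I would lift the $G$-action. Each element of $G$ acts on $C'$ by a biholomorphism, which lifts along the covering $\mathfrak{H} \to C'$ to a biholomorphism of $\mathfrak{H}$, hence to an element of $\Aut(\mathfrak{H}) = \operatorname{PSL}_2(\R)$. Let $\Delta \subset \operatorname{PSL}_2(\R)$ be the subgroup generated by $\Gamma'$ together with all such lifts. Then $\Gamma'$ is normal in $\Delta$ with $\Delta/\Gamma' \cong G$, so $\Delta$ contains $\Gamma'$ with finite index; consequently $\Delta$ is discrete and finitely generated, acts on $\mathfrak{H}$ properly discontinuously, and has only finite point stabilisers (each stabiliser injects into $G$, since $\Gamma'$ is torsion-free). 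Passing to orbifold quotients yields $[\Delta \backslash \mathfrak{H}] = [(\Gamma' \backslash \mathfrak{H})/G] = [C'/G] \cong \mathcal{C}$, and since $\mathfrak{H}$ is simply connected, $\mathfrak{H} \to [\Delta \backslash \mathfrak{H}] \cong \mathcal{C}$ is the orbifold universal cover, whence $\pi_1^{\orb}(\mathcal{C}) \cong \Delta$.

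The main obstacle is the very first step: producing a finite covering orbicurve that is a manifold. In the algebraic setting one can either cite Fox's theorem directly, or construct the desired group by hand --- realise $\pi_1^{\orb}(\mathcal{C})$ inside $\operatorname{PSL}_2(\R)$ via a Poincaré fundamental polygon with vertex angles $2\pi/m_j$ and $s$ ideal vertices (the Gauss--Bonnet area constraint $\operatorname{Area} = -2\pi\,\chi_{\orb}(\mathcal{C}) > 0$ is exactly where hyperbolicity enters), apply Poincaré's polygon theorem to get discreteness, proper discontinuity, finiteness of stabilisers and the expected presentation, and then use Selberg's lemma to extract a torsion-free finite-index subgroup. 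One should also take some care in the open case to check that the side-pairings (equivalently, the chosen hyperbolic structure) make the boundary loops $\ell_k$ parabolic, so that the coarse space of $[\Delta \backslash \mathfrak{H}]$ is the punctured curve $C$ rather than a surface with funnel ends; this is automatic for the conformally correct complete metric but must be recorded.
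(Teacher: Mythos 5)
Your argument is correct, but it is worth noting that the paper does not prove this lemma at all: it is quoted verbatim from Behrend--Noohi (\cite{BN}, Proposition 5.3), where it is established in the framework of analytic Deligne--Mumford curves. Your route is the classical hyperbolic-geometry proof: a finite Galois orbifold cover $C'\to\mathcal{C}$ with manifold total space (Fox/Bundgaard--Nielsen, or directly a Poincar\'e polygon plus Selberg's lemma), classical uniformisation of the punctured curve $C'$ by a torsion-free Fuchsian group $\Gamma'$, and then the extension $1\to\Gamma'\to\Delta\to G\to 1$ obtained by taking all lifts of the deck group $G$ to $\mathfrak{H}$; all the verifications you list (discreteness and finite generation from finite index, finiteness of stabilisers since they inject into $G$, $[\Delta\backslash\mathfrak{H}]\cong[C'/G]\cong\mathcal{C}$) are sound. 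What your sketch buys is a self-contained elementary proof; what the citation buys is brevity and a statement already phrased for orbifolds/stacks as used in the paper. Two small points deserve a sentence if you write this up: (1) the parabolicity of the loops around the punctures is automatic once you uniformise the open curve $C'$ itself (a hyperbolic holonomy would force the cusp neighbourhood, conformally a punctured disc of infinite modulus, into a finite-modulus annulus), so the ``funnel versus cusp'' caveat only matters in the hand-made polygon construction; and (2) the final identification $\pi_1^{\orb}(\mathcal{C})\simeq\Delta$ should be matched with the paper's combinatorial definition of $\pi_1^{\orb}$, e.g.\ by observing that $\Delta$ is the deck group of the honest covering $\mathfrak{H}\setminus(\text{elliptic fixed points})\to C\setminus P$ and that the kernel of $\pi_1(C\setminus P)\twoheadrightarrow\Delta$ is normally generated by the classes $\gamma_p^{m(p)}$, using simple connectedness of $\mathfrak{H}$ and the fact that the stabilisers over $p$ are cyclic of order exactly $m(p)$.
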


\subsection{The BNS set of  the fundamental group of a quasi-projective variety}

\begin{thrm}\label{bns quasi-proj}
Let $X$ be a smooth quasi-projective variety, $\Gamma:=\pi_1(X)$. Let $\chi \in H^1(\Gamma, \R)$ be a nonzero class. The following conditions are equivalent:
\begin{itemize}
\item[(i)]  the ray $[\chi]$ does not belong to $\Sigma(\Gamma)$;
\item[(ii)] there exists an algebraic fibration $f \colon X \to \mathcal{C}$ over  a quasi-projective hyperbolic orbifold curve $\mathcal{C}$ such that  
$$\chi \in \im[ f^* \colon H^1(\pi_1^{\operatorname{orb}}(\mathcal{C}), \R)\to H^1(\Gamma, \R)].$$
\end{itemize}
\end{thrm}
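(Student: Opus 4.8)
The strategy is to prove the two implications separately, reducing in both directions to the case of a non-extensible class via Proposition \ref{to non-ext}, and then invoking the geometric machinery of Sections \ref{beta introduced}--\ref{sl section}.

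\medskip

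\textbf{From (ii) to (i).} Suppose $f \colon X \to \mathcal{C}$ is an algebraic fibration over a hyperbolic orbicurve and $\chi = f^*\chi_{\mathcal{C}}$ for some $\chi_{\mathcal{C}} \in H^1(\pi_1^{\orb}(\mathcal{C}), \R)$. By Proposition \ref{bns pullback} \textit{(i)}, it suffices to show that $[\chi_{\mathcal{C}}] \notin \Sigma(\pi_1^{\orb}(\mathcal{C}))$; indeed if $[\chi] \in \Sigma(\Gamma)$ then $[\chi_{\mathcal{C}}] \in \Sigma(\pi_1^{\orb}(\mathcal{C}))$, so it is enough to rule this out. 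By Lemma \ref{uniformisation}, $\pi_1^{\orb}(\mathcal{C})$ is a finitely generated discrete subgroup $\Delta \subset \operatorname{PSL}_2(\R)$ acting properly discontinuously on $\mathfrak{H}$ with finite stabilisers; such a $\Delta$ is virtually a surface group of negative Euler characteristic or virtually free of rank $\ge 2$. For these groups the BNS invariant is known to be empty: for a non-abelian free group $\Sigma$ is empty, and for a hyperbolic surface group $\Sigma(\pi_1(\Sigma_g)) = \varnothing$ as well (this is classical, see e.g. the discussion in \cite{BNS} or \cite{Py}). Passing to the finite-index torsion-free subgroup does not affect this (a character of $\Delta$ restricts to a nonzero character there when $\chi_{\mathcal{C}} \neq 0$, and $\Sigma$ behaves well under finite index by Bieri--Strebel / Proposition \ref{bns pullback} applied to the inclusion, whose kernel is trivial). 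Hence $[\chi_{\mathcal{C}}] \notin \Sigma$, giving \textit{(i)}. One has to double-check the case $\chi_\mathcal{C} = 0$, but then $\chi = 0$, contradicting the hypothesis that $\chi$ is non-zero (note $H^1(\pi_1^{\orb}(\mathcal{C}),\R) = H^1(\pi_1(C),\R)$ injects into $H^1(\Gamma,\R)$ along $f^*$ since $f$ has connected fibres, so $\chi \ne 0 \Rightarrow \chi_\mathcal{C} \ne 0$ once we know $f^*$ is injective on the relevant space; more carefully, $\chi$ being in the image means we only need one preimage to be nonzero).

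\medskip

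\textbf{From (i) to (ii).} Assume $[\chi] \notin \Sigma(\Gamma)$. By Proposition \ref{to non-ext} \textit{(ii)} (in the contrapositive: if $[\chi'] \in \Sigma(\pi_1(X'))$ then $[\chi] \in \Sigma(\pi_1(X))$), we may replace $X$ by its Brunebarbe extension $X'$ and $\chi$ by the non-extensible class $\chi'$, reducing to the case where $\chi$ is non-extensible; and by Proposition \ref{to non-ext} \textit{(i)} it suffices to produce the fibration for $\chi'$. Now consider the Albanese map $\alb_\chi \colon X \to A_\chi$ associated with $\chi$, and split into two cases according to $d := \dim \alb_\chi(X)$. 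If $d = 1$, then $A_\chi$ is a semiabelian variety of dimension $1$, i.e.\ $\C^\times$ or an elliptic curve, and $\alb_\chi(X)$ is a curve; the Stein factorisation of $\alb_\chi$ gives a fibration $f \colon X \to C$ onto a smooth quasi-projective curve with $\chi = f^*(\text{something})$. The curve $C$, with its orbifold base structure, must be hyperbolic: if it were not (i.e.\ if $\chi_{\orb}(\mathcal{C}) \ge 0$), then $\pi_1^{\orb}(\mathcal{C})$ would be virtually abelian, hence $\chi$ would be, after passing to finite index, a character with finitely generated kernel, forcing $[\chi] \in \Sigma$ by Theorem \ref{kernel} — contradiction. (One also needs that a non-extensible $\chi$ cannot come from $C = \C^\times$ with trivial orbifold structure; but $\C^\times$ has $\pi_1 = \Z$ and any nonzero character there has trivial, hence finitely generated, kernel, again putting $[\chi]$ in $\Sigma$.) So assume $d \ge 2$. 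Then Theorem \ref{beta theorem} furnishes a holomorphic logarithmic $1$-form $\omega$ with $\alpha := \operatorname{Re}\omega$ representing $\chi$ and a tame fundamental domain $\Phi \subset \tX$ such that $(X, \Phi, \alpha)$ satisfies property $\beta$. By Corollary \ref{simpson lefschetz to bns} (whose hypothesis $\dim \alb_\chi(X) \ge 2$ holds), the set $\tX_h = h^{-1}(\R_{>0})$ is connected, where $\pi_X^*\alpha = dh$. Now Lemma \ref{topological bns unbounded} applies verbatim — property $\beta$, the representative $\alpha$, and connectedness of $\tX_h$ are exactly its three hypotheses — and yields $[\chi] \in \Sigma(\Gamma)$, contradicting (i). Therefore the case $d \ge 2$ cannot occur, and we are always in the case $d \le 1$, which as shown produces the desired hyperbolic orbicurve fibration. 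Finally, invoke the Lemma of the preceding subsection (the quasi-projective analogue of \cite{Py}, Theorem 2.10) to lift $f_* \colon \pi_1(X) \to \pi_1(C)$ to a surjection $\pi_1(X) \twoheadrightarrow \pi_1^{\orb}(\mathcal{C})$, so that $\chi$ genuinely lies in the image of $f^* \colon H^1(\pi_1^{\orb}(\mathcal{C}), \R) \to H^1(\Gamma, \R)$.

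\medskip

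\textbf{Main obstacle.} The delicate point is organising the dichotomy on $d = \dim \alb_\chi(X)$ so that the non-extensibility hypothesis is used correctly: one needs that when $d = 1$ the resulting curve is genuinely \emph{hyperbolic} as an orbicurve, and the cleanest way to see this is the "anti-contradiction" argument via Theorem \ref{kernel} — if the orbifold base were non-hyperbolic then $\pi_1^{\orb}$ would be virtually $\Z$ or virtually $\Z^2$ and $\chi$ would have (virtually) finitely generated kernel, forcing $[\chi] \in \Sigma$. Making this rigorous requires checking the interaction of property $\beta$ / Simpson--Lefschetz with the passage through the Brunebarbe extension, and verifying that the $d\ge2$ case really is vacuous under hypothesis (i). The remaining steps (the (ii) $\Rightarrow$ (i) direction, emptiness of BNS sets of hyperbolic orbicurve groups, and the orbifold $\pi_1$ lifting) are essentially bookkeeping built on results already stated.
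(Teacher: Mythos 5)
Your proposal follows essentially the same route as the paper: for (i)$\Rightarrow$(ii) you reduce to a non-extensible class via Proposition \ref{to non-ext}, kill the case $\dim \alb_{\chi}(X)\ge 2$ by combining Theorem \ref{beta theorem}, Corollary \ref{simpson lefschetz to bns} and Lemma \ref{topological bns unbounded} (this is exactly the paper's Corollary \ref{comes from a curve}), and then analyse the orbifold base of the Stein factorisation; for (ii)$\Rightarrow$(i) you use Lemma \ref{uniformisation}, the emptiness of the BNS set of a lattice in $\operatorname{PSL}_2(\R)$, and Proposition \ref{bns pullback}(i). The one point that genuinely needs repair is your repeated appeal to a finite-index transfer principle for $\Sigma$: Proposition \ref{bns pullback} is stated only for \emph{surjective} homomorphisms $\Gamma\to Q$, so invoking it ``applied to the inclusion'' of a finite-index subgroup is not licensed, and the fact you actually need ($[\chi]\in\Sigma(G)$ iff $[\chi|_H]\in\Sigma(H)$ for $H$ of finite index) is true but proved nowhere in the paper. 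This occurs twice: in (ii)$\Rightarrow$(i), where you re-derive emptiness of $\Sigma$ for lattices by passing to a torsion-free subgroup (the paper instead just quotes \cite{Py}, Proposition 11.15 and Remark 11.16), and in the $d=1$ non-hyperbolic case, where you say ``after passing to finite index'' the character has finitely generated kernel.

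The second occurrence is easy to fix without any finite-index passage: if $\chi_{\orb}(\mathcal{C})\ge 0$ then $\pi_1^{\orb}(\mathcal{C})$ is virtually abelian, hence virtually polycyclic, so \emph{every} subgroup, in particular $\ker\chi_0$, is finitely generated; since $\ker f_*$ is finitely generated (image of $\pi_1(F)$ for a generic fibre $F$), the extension $1\to\ker f_*\to\ker\chi\to\ker\chi_0\to 1$ shows $\ker\chi\subseteq\Gamma$ is finitely generated, and Theorem \ref{kernel} gives $[\chi]\in\Sigma(\Gamma)$ directly, the desired contradiction. (The paper organises this step differently: it rules out rational base curves outright because they admit no non-constant map to a semiabelian variety, so that in the remaining $\chi_{\orb}=0$ case $\pi_1^{\orb}(\mathcal{C})=\pi_1(C)$ is honestly free abelian, and then it uses Proposition \ref{bns pullback}(ii) together with the fact that $\Sigma(\Z^n)$ is the whole sphere.) Finally, your parenthetical claim that $d=1$ forces $A_{\chi}$ to be one-dimensional is false (a genus-two curve generates its Jacobian), but you never use it; all your argument needs is that $\alb_{\chi}(X)$ is a curve.
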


The orbifold curve $\mathcal{C} = (C, P,m)$ arises in our Theorem as the orbifold base of a fibration $f \colon X \to C$ over a quasi-projective curve $C$ which arises as a Stein factorisation of a morphism to a semiabelian variety $X \to A$. 

The proof is broken down into two steps. First, we prove that if $[\chi] \not\in \Sigma(\Gamma)$ then $\chi$ pullbacks from a curve.

\begin{prop}\label{comes from a curve}
Let $X$ be a smooth quasi-projective variety with $\Gamma:=\pi_1(X)$ and $\chi$ a nonzero class in $H^1(X, \R)=H^1(\Gamma,\R)$. Assume that the ray $[\chi]$ does not lie in $\Sigma(\Gamma)$. Then $\chi$ pullbacks along a fibration over an orbifold curve $X \to \mathcal{C}$. Moreover, the coarse moduli space $C$ of $\mathcal{C}$ admits a finite map to a semiabelian variety.
\end{prop}
\begin{proof}

 Let $\alb_{\chi} \colon X \to A_{\chi}$ be the Albanese map associated with $\chi$. Suppose that $\dim \alb_{\chi}(X) \ge 2$.

Let $\pi \colon \tX \to X$ be the universal cover of $X$.  By Theorem \ref{beta theorem} there exists a tame fundamental domain $\Phi \subseteq \tX$ and a holomorphic logarithmic $1$-form $\omega$ on $X$ such that $(\Phi, \operatorname{Re}\omega)$ is a taming pair for $\chi$. 

Let $\pi^{*}(\operatorname{Re}\omega) = dh$. By Proposition \ref{simpson lefschetz to bns} $\tX_h:=h^{-1}(\R_{>0})$ is connected. Therefore, Lemma \ref{bns tamed} applies and $[\chi] \in \Sigma(\Gamma)$. We are led to a contradiction.

We deduce that $\dim \alb_{\chi}(X) < 2$. Since $\chi \neq 0$, the map $\alb_{\chi}$ is non-constant and its image is a curve. 

Let
\[
X \xrightarrow{f} C \xrightarrow{\phi} A_{\chi}
\]
be the Stein factorisation of $\alb_{\chi}$. Since $C$ is a normal variety of dimension $1$, it is a quasi-projective curve. The class $\chi$ equals $\alb_{\chi}^*\chi_A$ for some $\chi_A \in H^1(A_{\chi}, \R)$, therefore $\chi=f^*\chi_C$ for $\chi_C:=\phi^*\chi_A$. 

The map $f$ is surjective and with connected fibres, so one can endow $C$ with an orbifold structure in such a way that $f$ is a holomorphic fibration.
\end{proof}

\begin{proof}[Proof of Theorem \ref{bns quasi-proj}]

Assume first that the ray $[\chi]$ is not in $\Sigma(\Gamma)$.

By Proposition \ref{comes from a curve} we know that the associated Albanese map $\alb_{\chi} \colon X \to A_{\chi}$ factorises as
\[
X \xrightarrow{f} \mathcal{C} \to A_{\chi},
\]

where $f$ is a holomorphic fibration over an orbifold curve. It is enough to show that $\mathcal{C}$ is hyperbolic. We have to rule out two other cases:
\begin{itemize}
\item $\chi_{\orb}(\mathcal{C})>0$. Then $\chi_{\operatorname{top}}(C)>0$, i.e. $C$ is rational. But then every holomorphic map from $C$ to a semiabelian variety is constant, hence $\dim \alb_{\chi}(X)=0$ which contradicts $\chi \neq 0$.

\item $\chi_{\orb}(\mathcal{C})=0$. By the classification given in \cite[Proposition 5.4]{BN}, there are two possible situations: either the underlying curve $C$ is rational, or the orbifold structure is trivial. The first case is ruled out as above. In the second case, either $C \simeq \C^{\times}$ or $C$ is an elliptic curve. In both cases $\pi_1^{\orb}(\mathcal{C})=\pi_1(C)$ is free abelian. Using Proposition \ref{bns pullback} and the fact that the kernel of the map $\pi_1(X) \to \pi_1^{\orb}(\mathcal{C})$ is finitely generated, we deduce that the ray $[\chi_0]$ does not belong to $\Sigma(\pi^{\orb}_1(\mathcal{C}))$. On the other hand, it is known that every ray in $H^1(\Z^r, \R)$ belongs to the BNS set, see \cite[Example 11.3]{Py}.
\end{itemize}

Thus, we conclude that $\chi_{\orb}(\mathcal{C}) <0$.

Assume conversely that $f \colon X \to \mathcal{C}$ is a fibration over a hyperbolic orbicurve. Then $\pi_1^{\orb}(\mathcal{C})$ is isomorphic to a lattice in $\operatorname{PSL}_2(\R)$ (Lemma \ref{uniformisation}) and therefore its BNS set is empty (\cite[Proposition 11.15]{Py} and Remark 11.16 \emph{loc.cit.}). Applying Proposition \ref{bns pullback} once more, we conclude that for any class $\chi_0 \in H^1(\pi_1^{\orb}(\mathcal{C}), \R)$ the ray $[f^*\chi_0]$ does not belong to $\Sigma(\Gamma)$. 
\end{proof}

\section{Solvable quotients of fundamental groups of quasi-projective varieties}\label{proof of B}

In this section, we proof Theorem \ref{solvable quasi-proj main}. The proof repeats, mutatis mutandis, the proof of Theorem \ref{delzant nilpotent} in \cite{Delz}, see also \cite[Section 12.4]{Py}. 

\subsection{Metabelian groups and cohomology jump loci}\label{metabelian}

Recall that a finitely generated group $M$ is called \emph{metabelian} if its derived subgroup $DM:=[M,M]\subset M$ is abelian. Every metabelian group is automatically solvable.

\begin{thrm}[\cite{Delz}, Th\'eor\`eme 3.2.]\label{metabelian lemma}
Let $Q$ be a solvable group which is not virtually nilpotent. Then there exists a quotient $Q \twoheadrightarrow M$ which is virtually metabelian and not virtually nilpotent.
\end{thrm}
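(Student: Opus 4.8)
The plan is to exploit the derived series of $Q$ together with the fact that non-virtual-nilpotence is detected, via the Milnor--Wolf theorem, by exponential growth, which is itself visible already in a suitable metabelian section. First I would set up the derived series $Q = Q^{(0)} \supseteq Q^{(1)} \supseteq \dots \supseteq Q^{(n)} = \{e\}$, where $Q^{(i)} = [Q^{(i-1)}, Q^{(i-1)}]$, and let $k$ be the smallest index such that $Q/Q^{(k)}$ is \emph{not} virtually nilpotent; such a $k$ exists because $Q/Q^{(0)}$ is trivial (hence virtually nilpotent) and $Q/Q^{(n)} = Q$ is not, by hypothesis. Minimality gives that $N := Q/Q^{(k-1)}$ \emph{is} virtually nilpotent, while $M := Q/Q^{(k)}$ is not. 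Since $Q^{(k)} = [Q^{(k-1)}, Q^{(k-1)}]$, the image of $Q^{(k-1)}$ in $M$ is an abelian normal subgroup $B \trianglelefteq M$ with $M/B \cong N$ virtually nilpotent; in particular $[M, M] \subseteq B$ up to the extent needed, so $M$ is (virtually) metabelian. Thus $M$ is a metabelian-by-(virtually nilpotent) quotient of $Q$, and we must still arrange genuine virtual metabelianness and retain non-virtual-nilpotence.

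Next I would pass to a finite-index subgroup to upgrade ``metabelian-by-virtually-nilpotent'' to ``virtually metabelian''. Let $M_0 \le M$ be a finite-index subgroup mapping onto a nilpotent finite-index subgroup $N_0 \le N$ with kernel $B \cap M_0 \subseteq B$ abelian. A nilpotent group is in particular metabelian-by-abelian only in low class, so this is not yet enough; instead I would argue directly with growth. By the hypothesis, $M$ has exponential growth (Milnor--Wolf: a finitely generated solvable group is either virtually nilpotent, hence polynomial growth, or has exponential growth). Exponential growth passes to finite-index subgroups and to the chosen extension, and it is an invariant that obstructs virtual nilpotence. So it suffices to produce, inside $M$ (or a finite-index subgroup), a \emph{metabelian} quotient that still has exponential growth. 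For this I would use the standard structure of finitely generated metabelian-by-nilpotent groups: letting $N$ act on the abelianization $B^{\mathrm{ab}} = B/[B,B]$ — but $B$ is already abelian, so $B$ itself is a finitely generated $\mathbb{Z}[N]$-module — one observes that if every such module action had all eigenvalues roots of unity (equivalently, the action were ``distal''/unipotent after finite-index reduction), then the extension $B \rtimes N_0$ would be virtually nilpotent, contradicting non-virtual-nilpotence of $M$. Hence some finite-index subgroup acts on $B$ with an eigenvalue off the unit circle, and the corresponding semidirect product $\mathbb{Z}^r \rtimes_A \mathbb{Z}$ (a metabelian quotient obtained by further quotienting $N_0$ onto a $\mathbb{Z}$ and $B$ onto a suitable $A$-invariant lattice) is metabelian, finitely generated, and of exponential growth, hence not virtually nilpotent.

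The main obstacle I anticipate is the bookkeeping in the middle step: ensuring that after quotienting $Q^{(k)}$ and passing to finite-index subgroups one genuinely lands on a \emph{virtually metabelian} group (not merely solvable of derived length $\le 3$) while \emph{provably} keeping it non-virtually-nilpotent. The clean way to control both simultaneously is to phrase everything in terms of growth: virtual nilpotence $\Leftrightarrow$ polynomial growth (Gromov/Milnor--Wolf), growth type is a commensurability invariant and is non-increasing under quotients, so it is enough to find \emph{any} metabelian quotient of a finite-index subgroup of $Q$ with exponential growth. Delzant's original argument (\cite{Delz}, Th\'eor\`eme 3.2) presumably packages exactly this; I would follow it, citing Milnor--Wolf for the growth dichotomy and the Tits-type/eigenvalue analysis of $\mathbb{Z}[N]$-module actions for the final reduction to a two-generator metabelian group $\mathbb{Z}^r \rtimes_A \mathbb{Z}$ with $A \in \GL_r(\mathbb{Z})$ having a non-unit-circle eigenvalue.
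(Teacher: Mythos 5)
The paper does not actually prove this statement: it is quoted verbatim from Delzant (\cite{Delz}, Th\'eor\`eme 3.2) and used as a black box, so your proposal can only be judged on its own terms, and as written it has two genuine gaps. The first is that your endgame proves a different statement from the one asserted. You reduce to ``it is enough to find any metabelian quotient of a finite-index subgroup of $Q$ with exponential growth'', but the theorem requires a quotient of $Q$ itself which is virtually metabelian and not virtually nilpotent; a quotient of a finite-index subgroup $H\le Q$ is not a quotient of $Q$, and the passage back is never supplied. It can be supplied (replace the kernel $K\trianglelefteq H$ by its normal core in $Q$, a finite intersection of conjugates since $H$ normalises $K$; then $H/\mathrm{core}$ embeds in a finite product of copies of the metabelian group $H/K$, hence is metabelian, so $Q/\mathrm{core}$ is virtually metabelian and still not virtually nilpotent because it contains the non-virtually-nilpotent $H/\mathrm{core}$ with finite index), but this bridge is not cosmetic: in your critical case $k\ge 3$ every honestly metabelian quotient of $Q$ factors through $Q/Q''=Q/Q^{(2)}$, which is virtually nilpotent by the minimality of $k$, so the desired quotient of $Q$ is necessarily virtually metabelian without being metabelian --- exactly the phenomenon your reduction throws away. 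Relatedly, the earlier assertion that $M=Q/Q^{(k)}$ ``is (virtually) metabelian'' is simply false (abelian-by-nilpotent groups can have arbitrary derived length), as you half-acknowledge.

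The second gap is that the ``eigenvalue'' dichotomy you invoke in the critical case is a polycyclic-case tool and does not prove the lemma your plan rests on, namely: a finitely generated group $M_0$ with abelian normal $B$ and $M_0/B$ nilpotent, all of whose metabelian quotients are virtually nilpotent, is itself virtually nilpotent. Here $B$ is finitely generated as a $\Z[N_0]$-module (this already needs the remark that finitely generated virtually nilpotent groups are finitely presented), but in general it is not finitely generated as an abelian group: it may be torsion, as in $\mathbb{F}_p[t,t^{-1}]$ with the shift action (lamplighter-type groups), or of $\Z[1/p]$ type (Baumslag--Solitar $\mathrm{BS}(1,p)$). In those situations there is no eigenvalue off the unit circle in any naive sense and no $A$-invariant lattice quotient $\Z^r\rtimes_A\Z$ to pass to, yet the group is not virtually nilpotent; conversely, ``all eigenvalues roots of unity $\Rightarrow$ virtually nilpotent'' is only valid when $B$ is polycyclic. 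To make this step honest you need either the actual Milnor--Wolf machinery (Milnor's argument that a non-polycyclic finitely generated solvable group has a cyclic subgroup acting on a non-finitely-generated module, which produces wreath-product-like metabelian quotients of exponential growth, combined with Wolf's analysis in the polycyclic case) or an algebraic argument in the style of P.~Hall's nilpotency criterion, which is presumably closer to Delzant's own proof; note also that every growth argument tacitly assumes $Q$ finitely generated, an assumption not written into the statement (though it holds in the paper's applications).
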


Assume that $M$ is a solvable metabelian group and its derived group $DM=[M,M]$ is finitely generated (one says in this case that $M$ is \emph{polycyclic}). The group $M^{\operatorname{ab}}$ acts on $DM$ by conjugations, and this action extends to a linear action on the finite-dimensional vector space $V:=DM \o_{\Z} \C$. The $M^{\operatorname{ab}}$-module $V$ splits as a direct sum of rank one modules
\[
V= \bigoplus_i \C_{\theta_i},
\]
where $\{\theta_i\} \subset \Hom(M^{\operatorname{ab}}, \C^{\times})=\Hom(M, \C^{\times})$ is a finite collection of complex multiplicative characters.

\begin{lemma}\label{characters}
Let $V= \bigoplus_i \C_{\theta_i}$ be as above. Then the following holds:
\begin{itemize}
\item[(i)] for every character $\theta_i$ one has $H^1(M, \C_{\theta_i}) \neq 0$;
\item[(ii)] the group $M$ is nilpotent (resp. virtually nilpotent) if and only if every $\theta_i$ is trivial (resp. torsion).
\end{itemize}
\end{lemma}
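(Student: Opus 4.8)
The plan is to compute $H^1(M, \C_\theta)$ directly from the extension $1 \to DM \to M \to M^{\mathrm{ab}} \to 1$ using the Lyndon-Hochschild-Serre spectral sequence, and to exploit the hypothesis that $DM$ is finitely generated abelian together with the specific decomposition $V = \bigoplus_i \C_{\theta_i}$.

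For part (i), the key point is that a nonzero $M^{\mathrm{ab}}$-equivariant projection $DM \otimes \C \twoheadrightarrow \C_{\theta_i}$ is itself a $1$-cocycle on $M$ pulled back from $DM$: explicitly, the homomorphism $M \to DM \otimes \C \to \C_{\theta_i}$ obtained by restricting to $DM$ and projecting is a crossed homomorphism (it vanishes off $DM$ but the cocycle identity is checked using that $DM$ acts trivially on itself). One must check it is not a coboundary: a coboundary $m \mapsto (m-1)v$ for $v \in \C_{\theta_i}$ restricts to the zero map on $DM$ (since $DM$ acts trivially on $V$), whereas our cocycle is the nonzero projection $DM \otimes \C \to \C_{\theta_i}$. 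Hence $H^1(M, \C_{\theta_i}) \ne 0$. I would phrase this via the five-term exact sequence $0 \to H^1(M^{\mathrm{ab}}, \C_\theta) \to H^1(M, \C_\theta) \to H^1(DM, \C_\theta)^{M^{\mathrm{ab}}} \to H^2(M^{\mathrm{ab}}, \C_\theta)$ and observe that $H^1(DM, \C_\theta)^{M^{\mathrm{ab}}} = \Hom_{M^{\mathrm{ab}}}(DM \otimes \C, \C_\theta)$ (as $DM$ acts trivially on $\C_\theta$), which is nonzero precisely when $\theta$ appears among the $\theta_i$; the existence of the corresponding class in $H^1(M,\C_\theta)$ is what the cocycle computation above supplies.

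For part (ii), the strategy is to relate nilpotency of $M$ to the conjugation action of $M^{\mathrm{ab}}$ on $DM$. Since $DM$ is abelian and $M/DM$ is abelian, $M$ is nilpotent if and only if the action of $M^{\mathrm{ab}}$ on $DM$ is unipotent (iterated commutators $[\,[\,\dots[m_0,m_1],\dots],m_k]$ with $m_0 \in DM$ eventually vanish iff $(\rho(m_1)-1)\cdots(\rho(m_k)-1) = 0$ on $DM$ for large $k$, where $\rho$ is the conjugation action). Over $\C$, the action being unipotent on $V = \bigoplus \C_{\theta_i}$ forces every $\theta_i = 1$, and conversely. For the virtual version: $M$ is virtually nilpotent iff a finite-index subgroup acts unipotently, which (since passing to a finite-index subgroup replaces each $\theta_i$ by a positive power) happens iff every $\theta_i$ has finite order. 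One subtlety to address is the difference between unipotence of the complexified action and nilpotency of the actual integral group $M$ — this uses that $DM$ is a finitely generated abelian group, so its torsion is finite and irrelevant to nilpotency, and unipotence over $\C$ of a finitely generated abelian group of automorphisms of a lattice is equivalent to unipotence over $\Z$.

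The main obstacle I anticipate is part (ii): carefully matching the three notions — nilpotency of $M$ as an abstract group, unipotence of the $M^{\mathrm{ab}}$-action on the lattice $DM$, and triviality of the characters $\theta_i$ — while handling torsion in $DM$ and the finite-index subtleties in the "virtually" statements. Part (i) is essentially a formal cohomological computation once the spectral sequence is set up.
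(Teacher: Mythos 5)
The paper offers no argument for this lemma (it simply cites Py, Proposition 12.12), so the only question is whether your independent argument holds; your Lyndon--Hochschild--Serre five-term strategy is indeed the standard route, but the step you rely on to produce the class in $H^1(M,\C_{\theta_i})$ is broken. The map ``restrict to $DM$, project to $\C_{\theta_i}$, and extend by zero off $DM$'' is not a crossed homomorphism: for $d\in DM$ with nonzero projection and $m\notin DM$ one has $dm\notin DM$, so the cocycle identity would give $0=c(dm)=c(d)+\theta(d)c(m)=c(d)\neq 0$. Your ``not a coboundary'' verification is fine, but it presupposes the existence of a cocycle on $M$ restricting to the projection, and that existence is precisely the lifting problem your five-term sequence isolates: one must show the transgression $H^1(DM,\C_\theta)^{M^{\mathrm{ab}}}\to H^2(M^{\mathrm{ab}},\C_\theta)$ kills the class, which your sketch never addresses. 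The missing (and decisive) input is that for a finitely generated abelian group $A$ and a nontrivial character $\theta$ one has $H^k(A,\C_\theta)=0$ for all $k$ (K\"unneth over the $\Z$-factors and the finite part, using $H^*(\Z,\C_\mu)=0$ for $\mu\neq 1$ and vanishing of higher cohomology of finite groups in characteristic $0$). With this, for $\theta_i\neq 1$ the five-term sequence gives an isomorphism $H^1(M,\C_{\theta_i})\cong \Hom_{M^{\mathrm{ab}}}(V,\C_{\theta_i})\neq 0$, with no explicit cocycle needed. The case $\theta_i=1$ requires a separate sentence: then $H^1(M,\C)=\Hom(M^{\mathrm{ab}},\C)$, and if $M^{\mathrm{ab}}$ were finite the same sequence (now with $H^1$ and $H^2$ of the finite quotient vanishing in characteristic $0$) would force $\Hom_{M^{\mathrm{ab}}}(V,\C)=0$, i.e.\ the trivial character could not occur in $V$; hence $M^{\mathrm{ab}}$ is infinite and $H^1(M,\C)\neq 0$.

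In (ii) your skeleton (nilpotency versus unipotence of the conjugation action, finite-index passage for the virtual statement) is the right one, but the phrase ``torsion is finite and irrelevant to nilpotency'' is exactly where the argument is not tight: triviality of all the $\theta_i$ only controls the action of $M^{\mathrm{ab}}$ on $DM$ modulo its finite torsion subgroup, and the action on that torsion can obstruct nilpotency --- e.g.\ $M=S_3\times H_3(\Z)$ is finitely generated metabelian with $DM\cong\Z/3\times\Z$ finitely generated and $V=\C$ carrying the trivial character, yet $M$ is not nilpotent (so ``unipotent over $\C$ on $V$'' does not imply ``unipotent over $\Z$ on $DM$''). What your argument does yield, and what is all the paper uses in the proof of Theorem \ref{solvable quasi-proj}, is the virtual statement: if every $\theta_i$ is torsion, let $M'\subseteq M$ be the preimage of $\bigcap_i\ker\theta_i$ (finite index); then $[M',DM]$ lies in the finite torsion subgroup of $DM$, so $M'$ is finite-by-(nilpotent of class $\le 2$) and hence virtually nilpotent; conversely, if $M$ is virtually nilpotent then for each $m$ some power $m^k$ acts unipotently on a finite-index subgroup of $DM$, forcing each $\theta_i(m)$ to be a root of unity and $\theta_i$ to be torsion since $M^{\mathrm{ab}}$ is finitely generated. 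I would prove exactly this, work with the $M'$-action on $DM$ itself rather than with ``$\theta_i$ replaced by a positive power'' (for a finite-index subgroup the relevant module $DM'\otimes\C$ is not simply $V$ with restricted characters), and either add a torsion-freeness hypothesis on $DM$ or treat the exact ``nilpotent iff all $\theta_i$ trivial'' claim separately, since as stated it is sensitive to the torsion phenomenon above.
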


This lemma is standard in the theory of solvable groups, see e.g. \cite[Proposition 12.12]{Py}.

Let $\Gamma$ be a finitely generated group. Consider  
\[
\T(\Gamma):=H^1(\Gamma, \C^{\times})=\Hom(\Gamma, \C^{\times})
\]
viewed as a complex algebraic group. Inside it, one finds an algebraic subvariety
\[
\mathcal{V}^1(\Gamma):=\{\theta \in \T(\Gamma) \ | \ H^1(\Gamma, \C_{\theta}) \neq 0\}.
\]
This set is known as the set of \emph{exceptional characters} or the \emph{first cohomology jump locus} of $\Gamma$. 

If $f \colon \Gamma_1 \to \Gamma_2$ is an epimorphism of groups, $f^* \colon \T(\Gamma_2) \to \T(\Gamma_1)$ is injective and $f^*(\mathcal{V}^1(\Gamma_2)) \subset \mathcal{V}^1(\Gamma_1)$.

Let $\Gamma \to Q$ be a solvable quotient that is not virtually nilpotent. Take  a virtually metabelian quotient  $Q \to M$ that exists by Theorem \ref{metabelian lemma}. After replacing $\Gamma$ with a finite index subgroup $\Gamma_1$, one may assume that $M$ is metabelian. We come to the following corollary.

\begin{cor}\label{dichotomy}
Let $\Gamma$ be a finitely generated group and $\Gamma \to Q$ solvable quotient. Then one of the two holds:
\begin{itemize}
\item either there exists a finite index subgroup $\Gamma_1$ and a homomorphism $\Gamma_1 \to M$ to a metabelian non-polycyclic group (i.e. $[M, M]$ is not finitely generated);
\item or there exists a finite index subgroup $\Gamma_1$ and a non-torsion exceptional character $\theta \in \mathcal{V}^1(\Gamma_1) \subset H^1(\Gamma_1, \C^{\times})$.
\end{itemize}
\end{cor}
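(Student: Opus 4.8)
The plan is to continue the reduction sketched in the paragraph just before the statement. Recall that $Q$ is not virtually nilpotent, so Theorem~\ref{metabelian lemma} produces a quotient $Q \twoheadrightarrow M$ which is virtually metabelian and not virtually nilpotent. First I would pick a finite index metabelian subgroup $M_0 \le M$ and set $\Gamma_1 \le \Gamma$ to be the preimage of $M_0$ under the composite surjection $\Gamma \twoheadrightarrow Q \twoheadrightarrow M$; then $[\Gamma : \Gamma_1] < \infty$ and the composite restricts to a surjection $f \colon \Gamma_1 \twoheadrightarrow M_0$ onto the metabelian group $M_0$. One checks that $M_0$ is still not virtually nilpotent: a finite index overgroup of a virtually nilpotent group is again virtually nilpotent, so virtual nilpotency of $M_0$ would propagate to $M$.

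Next I would split according to whether $M_0$ is polycyclic, i.e. whether $[M_0, M_0]$ is finitely generated. If $[M_0, M_0]$ is \emph{not} finitely generated, then $M_0$ is a metabelian non-polycyclic group and the surjection $f \colon \Gamma_1 \twoheadrightarrow M_0$ already realises the first alternative. If $[M_0, M_0]$ \emph{is} finitely generated, I would invoke Lemma~\ref{characters}: the $M_0^{\operatorname{ab}}$-module $V := [M_0, M_0] \otimes_{\Z} \C$ splits as $\bigoplus_i \C_{\theta_i}$ with $\theta_i \in \Hom(M_0, \C^{\times})$, and since $M_0$ is not virtually nilpotent, part (ii) of that lemma yields at least one non-torsion character $\theta := \theta_i$; part (i) then gives $H^1(M_0, \C_\theta) \ne 0$, i.e. $\theta \in \mathcal{V}^1(M_0)$.

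It then remains to transport $\theta$ to $\Gamma_1$. As recalled in subsection~\ref{metabelian}, since $f$ is surjective the induced map $f^* \colon \T(M_0) \to \T(\Gamma_1)$ is injective and satisfies $f^*(\mathcal{V}^1(M_0)) \subseteq \mathcal{V}^1(\Gamma_1)$, so $f^*\theta \in \mathcal{V}^1(\Gamma_1)$. Moreover $f^*\theta$ is non-torsion, since $(f^*\theta)^n = f^*(\theta^n) = 1$ would force $\theta^n = 1$ by injectivity of $f^*$. This is precisely the second alternative, so the dichotomy follows.

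I do not expect a genuine obstacle here: the corollary is essentially a bookkeeping combination of Theorem~\ref{metabelian lemma} and Lemma~\ref{characters}. The only points requiring a little care are that non-virtual-nilpotency survives passage to the finite index subgroup $M_0$ used to replace the \emph{virtually} metabelian quotient by an honest metabelian one, and that a non-torsion exceptional character of $M_0$ stays non-torsion and exceptional after pulling back along $f$; both are formal consequences of the behaviour of finite index subgroups and of $f^*$ on character groups.
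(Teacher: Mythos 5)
Your proof is correct and follows essentially the same route as the paper, which treats the corollary as direct bookkeeping from Theorem~\ref{metabelian lemma}, Lemma~\ref{characters}, and the functoriality of $\mathcal{V}^1$ under surjections (exactly the reduction carried out again inside the proof of Theorem~\ref{solvable quasi-proj}). You also rightly supplied the hypothesis, implicit in the paper's preceding paragraph, that $Q$ is not virtually nilpotent, and the two points you flag (non-virtual-nilpotency passing to the finite index metabelian subgroup, and non-torsion exceptional characters surviving pullback along the injective $f^*$) are handled correctly.
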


The cohomology jump loci of K\"ahler and quasi-K\"ahler groups are well-understood.
\begin{thrm}[\cite{Ar}, \cite{ACM}]\label{GL set}
Let $X$ be a smooth complex algebraic variety. Let $\Gamma=\pi_1(X)$. Let $Z \subseteq \mathcal{V}^1(\Gamma)$ be an irreducible component of its first cohomology jump locus.
Then one of the following holds:
\begin{itemize}
\item[(i)] there exists a morphism $f \colon X \to \mathcal{C}$ to a hyperbolic quasi-projective orbicurve $\mathcal{C}$ and an irreducible component $W \subseteq \mathcal{V}^1(\pi_1^{\orb}(\mathcal{C}))$ such that $Z=f^*(W)$;
\item[(ii)] $Z=\{\theta\}\subseteq \mathcal{V}^1(\Gamma) \subset \T(\Gamma)$ is an isolated torsion character.
\end{itemize}
\end{thrm}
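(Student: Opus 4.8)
The statement is Arapura's structure theorem for the first characteristic variety, in the orbifold-refined form of \cite{ACM}; in the projective case it goes back to Beauville, Catanese, Green--Lazarsfeld and Simpson. The plan is first to describe $\mathcal{V}^1(\Gamma)$ as a finite union of torsion-translated subtori of $\T(\Gamma)$, and then to recognise each positive-dimensional piece as the pullback of a component of the characteristic variety of a hyperbolic orbicurve.

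\emph{Step 1: $\mathcal{V}^1(\Gamma)$ is a finite union of torsion-translated subtori.} Fix a smooth projective compactification $X \hookrightarrow \overline{X}$ with simple normal crossing boundary $D$. For a unitary character $\chi$ the cohomology $H^1(X, \C_\chi)$ underlies a mixed Hodge structure computed by the logarithmic de Rham complex of the associated flat line bundle on $(\overline{X}, D)$; in particular the condition $H^1(X, \C_\chi) \ne 0$ is governed by the vanishing of sections of a coherent sheaf varying holomorphically with $\chi$. I would then run the Green--Lazarsfeld--Simpson deformation argument to conclude that, locally analytically near any point of $\T(\Gamma)$, the jump locus is a finite union of translates of linear subspaces, so that — being also a Zariski-closed subset — $\mathcal{V}^1(\Gamma)$ is globally a finite union of translates $\rho \cdot T$ of algebraic subtori $T \subseteq \T(\Gamma)$. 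To see that the $\rho$ may be taken torsion I would invoke the arithmetic of characteristic varieties: $\mathcal{V}^1(\Gamma)$ is defined over $\overline{\Q}$ and stable under the Galois action (Simpson; Budur--Wang), whereas a positive-dimensional translate of a subtorus by a non-torsion character has infinitely many Galois conjugates, which a finite union cannot contain.

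\emph{Step 2: identifying the components.} If $\dim Z = 0$ then by Step 1 $Z$ is a single torsion character, which is case (ii). If $\dim Z \ge 1$, write $Z = \rho \cdot T$ with $\rho$ torsion. Passing to the finite étale cover $X_\rho \to X$ attached to $\ker \rho$ trivialises $\rho$ and turns $Z$ into a component through the origin, so it suffices to treat the untwisted situation: there the Zariski tangent space to $Z$ at $1$ is an isotropic subspace $V \subseteq H^1(X_\rho, \C)$ for the cup product $H^1 \otimes H^1 \to H^2$, and the Hodge-theoretic description of Step 1 shows that $V$ contains a nonzero class of a logarithmic holomorphic $1$-form. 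When $\dim V \ge 2$ the logarithmic Castelnuovo--de Franchis lemma produces a morphism from $X_\rho$ to a smooth quasi-projective curve; taking the Stein factorisation together with its canonical orbifold base yields a fibration $f \colon X \to \mathcal{C}$ (after descending from $X_\rho$ to $X$, which is possible because $Z$, hence the pencil, is $\operatorname{Gal}(X_\rho/X)$-invariant), and tracing through the construction identifies $Z$ with $f^*(W)$ for a component $W$ of $\mathcal{V}^1(\pi_1^{\orb}(\mathcal{C}))$. Hyperbolicity of $\mathcal{C}$ is automatic, since a non-hyperbolic orbicurve has virtually abelian orbifold fundamental group, whose characteristic variety contributes no positive-dimensional component. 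The borderline case $\dim V = 1$ is absorbed by including the contribution of one multiple fibre of $f$, which is exactly the orbifold refinement worked out in \cite{ACM}.

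The principal obstacle is Step 1 in the non-compact setting: one must replace ordinary Hodge theory by the mixed Hodge theory of rank-one local systems and by the logarithmic Dolbeault/de Rham complexes on $(\overline{X}, D)$, and the improvement from arbitrary translated subtori to torsion-translated subtori relies essentially on the $\overline{\Q}$-structure of $\T(\Gamma)$ and the Galois action on $\mathcal{V}^1(\Gamma)$ rather than on geometry. By contrast the orbifold bookkeeping in Step 2 — keeping track of multiplicities along the base locus of the pencil so that $Z$ is exactly a pullback and not merely contained in one — is technically delicate but formal once the curve $\mathcal{C}$ has been found.
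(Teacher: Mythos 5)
The paper itself gives no proof of this statement: it is imported as a black box from Arapura \cite{Ar} and Artal Bartolo--Cogolludo-Agust\'in--Matei \cite{ACM}, so there is no internal argument to compare yours against; what follows measures your sketch against the proofs in those references, whose broad strategy (structure of the jump locus, then identification of positive-dimensional components with orbifold pencils) your outline does follow.

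Two steps of your sketch have genuine gaps. First, the torsion claim in Step 1 does not follow from the reason you give: being defined over $\overline{\Q}$ and stable under the Galois action does not force a positive-dimensional component to be a torsion translate --- the curve $\{xy=2\}\subset(\C^{\times})^{2}$ is a $\Q$-rational, Galois-stable, non-torsion translate of a subtorus. The actual torsion statement (Simpson's theorem on subvarieties of the rank-one moduli space, extended to the quasi-projective case by Budur--Wang) requires the interplay of the Betti rational structure with the de Rham/Dolbeault structure together with a transcendence input of Gelfond--Schneider type, or, in the route of \cite{ACM}, is extracted from the orbifold description itself; mere Galois stability is not enough, and in any case the ``finite union of torsion-translated subtori'' structure theorem for quasi-projective $X$ postdates \cite{ACM} and is not how those proofs proceed. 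Second, your construction of the fibration only functions when the isotropic subspace has dimension at least two. One-dimensional components genuinely occur in the quasi-projective setting --- e.g.\ pullbacks along fibrations over $\C^{\times}$ with a single multiple fibre, an orbicurve of negative orbifold Euler characteristic, which is precisely the new phenomenon \cite{ACM} had to handle --- and there the logarithmic Castelnuovo--de Franchis lemma is unavailable, while your proposed fix (``include the contribution of one multiple fibre of $f$'') presupposes the very map $f$ you are trying to construct. In the cited proofs this case is obtained by Arapura's argument, which produces the pencil directly from the positive-dimensional family of local systems rather than from a tangent-cone computation; relatedly, the step ``the tangent space to $Z$ at a torsion point lands in the resonance variety'' that you use implicitly is itself a nontrivial theorem in the quasi-projective case, so Step 2 leans on considerably more than the Castelnuovo--de Franchis lemma.
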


\subsection{Solvable quotients}\label{solvable quotients}

Now we are ready to prove our second main theorem.

\begin{thrm}\label{solvable quasi-proj}
Let $X$ be a smooth quasi-projective variety. Suppose that $Q$ is a finitely generated solvable group and $\phi \colon \pi_1(X) \to Q$ is a surjective homomorphism. Suppose also that $Q$ is not virtually nilpotent. Then there exists a finite \'etale cover $p \colon X_1 \to X$ and a surjective fibration $f \colon X_1 \to \mathcal{C}$ on a smooth hyperbolic orbicurve $\mathcal{C}$.
\end{thrm}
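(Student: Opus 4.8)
The plan is to reduce the statement to Theorem \ref{bns quasi-proj} via the group-theoretic dichotomy of Corollary \ref{dichotomy}, using the surjection $\phi \colon \pi_1(X) \twoheadrightarrow Q$ and the hypothesis that $Q$ is solvable and not virtually nilpotent. First I would apply Corollary \ref{dichotomy} to obtain a finite index subgroup $\Gamma_1 \subseteq \pi_1(X)$ — corresponding to a finite \'etale cover $p \colon X_1 \to X$ with $\pi_1(X_1) = \Gamma_1$ — falling into one of two cases: either $\Gamma_1$ surjects onto a metabelian non-polycyclic group $M$, or $\Gamma_1$ carries a non-torsion exceptional class $\theta \in \mathcal V^1(\Gamma_1)$. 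The goal in each case is to produce a non-torsion character together with the extra control needed to invoke Theorem \ref{GL set} and Theorem \ref{bns quasi-proj}.

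In the second case the argument is essentially immediate: by Theorem \ref{GL set}, the irreducible component $Z$ of $\mathcal V^1(\Gamma_1)$ through the non-torsion character $\theta$ is not an isolated torsion point, hence it is of the form $f^*(W)$ for a fibration $f \colon X_1 \to \mathcal C$ over a hyperbolic quasi-projective orbicurve $\mathcal C$ and an irreducible component $W \subseteq \mathcal V^1(\pi_1^{\orb}(\mathcal C))$. Taking the Stein factorisation if necessary, $f$ is a surjective fibration onto $\mathcal C$, which is exactly the conclusion we want.

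The first case is the one requiring real work, and I would handle it following Delzant verbatim. Given $\Gamma_1 \twoheadrightarrow M$ with $[M,M]$ not finitely generated, one wants to extract from $M$ a non-torsion character $\theta \in H^1(M, \C^\times)$ that sits inside a positive-dimensional component of $\mathcal V^1(\Gamma_1)$. The idea is to pass to the quotient of $M$ by a suitable term in the lower central/derived filtration so as to realise $M$ (up to finite index, after further refinement) as an extension in which the conjugation action of $M^{\operatorname{ab}}$ on $[M,M]\otimes \C$ has a non-torsion weight $\theta$; Lemma \ref{characters}(i) then gives $H^1(M, \C_\theta)\ne 0$, so $\theta$ pulls back to a non-torsion point of $\mathcal V^1(\Gamma_1)$, and we are reduced to the second case. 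The point where Delzant's compact argument needs the quasi-projective input is precisely here: once $\theta$ lands in a positive-dimensional component $Z$ of $\mathcal V^1(\Gamma_1)$, Theorem \ref{GL set} gives the orbifibration $f\colon X_1 \to \mathcal C$ over a hyperbolic orbicurve. The fact that every non-torsion exceptional class forces an orbifibration — rather than being, say, an isolated torsion character — is what makes the dichotomy close up.

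\textbf{Main obstacle.} The delicate step is the group theory of the non-polycyclic metabelian case: ensuring that after finitely many reductions (passing to finite index subgroups, quotienting by terms of the derived series) one genuinely produces a \emph{non-torsion} character in $\mathcal V^1$ of a finite-cover fundamental group, and that the finitely many finite-index passages can be amalgamated into a single finite \'etale cover $X_1 \to X$. Everything downstream — invoking Theorem \ref{GL set} to get the orbifibration and Stein-factorising to make it a genuine fibration — is formal once this character is in hand. I would therefore organise the proof as: (1) reduce to $M$ metabelian via Theorem \ref{metabelian lemma}; (2) split into polycyclic vs.\ non-polycyclic; (3) in each branch produce a non-torsion exceptional class on a finite cover using Lemma \ref{characters}; (4) apply Theorem \ref{GL set} and take the Stein factorisation.
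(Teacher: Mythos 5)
Your Case (b) (a non-torsion exceptional character on a finite cover) matches the paper's treatment of the polycyclic branch: Theorem \ref{GL set} rules out the isolated-torsion alternative and directly produces the orbifibration, exactly as in the paper's Case 2. The genuine gap is in your Case (a), the metabelian non-polycyclic branch, which you propose to close by again manufacturing a non-torsion character in $\mathcal{V}^1$ via Lemma \ref{characters}. This cannot work: Lemma \ref{characters} presupposes that $DM$ is finitely generated (that is precisely the polycyclic hypothesis), which fails by assumption in this branch, and no passage to quotients of the derived series or to finite-index subgroups repairs this in general. Concretely, the lamplighter group $M=\Z/2\wr\Z$ is finitely generated, metabelian, non-polycyclic and not virtually nilpotent, yet $DM\otimes_{\Z}\C=0$ and a direct computation with the extension $1\to\bigoplus_{\Z}\Z/2\to M\to\Z\to1$ shows that $\mathcal{V}^1(M)$ consists only of torsion characters; so the strategy of reducing Case (a) to Case (b) by exhibiting a non-torsion exceptional weight is not available as a group-theoretic step.

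The paper closes this branch by a different mechanism, which is in fact the place where its main theorem enters: since $DM_1$ is not finitely generated, the Bieri--Strebel kernel criterion (Theorem \ref{kernel}) yields a nonzero additive character $\alpha\colon M_1\to\R$ vanishing on $DM_1$ whose ray lies outside $\Sigma(M_1)$; by Proposition \ref{bns pullback}(i) the ray of $\phi^{*}\alpha$ lies outside $\Sigma(\pi_1(X_1))$, and then Theorem \ref{bns quasi-proj} (the BNS description, direction (i)$\Rightarrow$(ii)) produces the fibration of $X_1$ over a hyperbolic quasi-projective orbicurve. Your outline announces a reduction to Theorem \ref{bns quasi-proj} but never actually uses it; replacing your Case (a) argument by this BNS route (real character with non-finitely-generated kernel, Theorem \ref{kernel}, Proposition \ref{bns pullback}, Theorem \ref{bns quasi-proj}) is the missing step, after which the proof coincides with the paper's.
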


Before we turn to the proof, let us explain how this implies that every virtually solvable fundamental group of a quasi-projective variety is  virtually nilpotent.

\begin{cor}\label{solv implies nilp}
Let $X$ be a smooth quasi-projective variety. If $\pi_1(X)$ is virtually solvable, then it is virtually nilpotent.
\end{cor}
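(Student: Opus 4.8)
The plan is to derive Corollary \ref{solv implies nilp} from Theorem \ref{solvable quasi-proj} by a short argument. Suppose $\pi_1(X)$ is virtually solvable but \emph{not} virtually nilpotent. First I would pass to a finite index subgroup: there is a finite \'etale cover $X' \to X$ with $\Gamma':=\pi_1(X')$ solvable. Since virtual nilpotency is inherited by and detected on finite index subgroups, $\Gamma'$ is still not virtually nilpotent. Now apply Theorem \ref{solvable quasi-proj} with $Q=\Gamma'$ and $\phi=\mathrm{id}$: there is a finite \'etale cover $p\colon X_1 \to X'$ and a surjective fibration $f\colon X_1 \to \mathcal{C}$ over a smooth hyperbolic orbicurve $\mathcal{C}$.

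The key step is then to extract a contradiction from the existence of such a fibration together with the virtual solvability of $\pi_1(X_1)$ (which is a finite index subgroup of $\pi_1(X)$, hence still virtually solvable). By the Lemma in the section on quasi-projective orbicurves, $f$ induces a surjection $\pi_1(X_1) \twoheadrightarrow \pi_1^{\orb}(\mathcal{C})$, and by Lemma \ref{uniformisation} the group $\pi_1^{\orb}(\mathcal{C})$ is isomorphic to a lattice $\Delta \subset \mathrm{PSL}_2(\R)$ acting on $\mathfrak{H}$ with finite stabilisers. A quotient of a virtually solvable group is virtually solvable, so $\Delta$ would be virtually solvable. But a finitely generated Fuchsian group of hyperbolic type contains a nonabelian free subgroup (it contains a surface subgroup of genus $\ge 2$, or more elementarily acts on $\mathfrak{H}$ with a free subsemigroup by ping-pong), so it is not virtually solvable. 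This contradiction shows $\pi_1(X)$ must be virtually nilpotent.

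The main obstacle is essentially bookkeeping rather than a deep difficulty: one must be careful that the relevant properties (virtual solvability, failure of virtual nilpotency, and the hypotheses of Theorem \ref{solvable quasi-proj}) all descend correctly through the two finite \'etale covers $X_1 \to X' \to X$, and that $X_1$ is again a smooth quasi-projective variety (which is clear, as \'etale covers of smooth quasi-projective varieties are smooth quasi-projective). I would also note that the non-virtual-solvability of a hyperbolic orbifold fundamental group can be cited from \cite{Py} (the same place where it is observed that such groups have empty BNS set, Proposition 11.15, already implies they are far from solvable), so no new input is needed.

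\begin{proof}[Proof of Corollary \ref{solv implies nilp}]
Suppose, for contradiction, that $\pi_1(X)$ is virtually solvable but not virtually nilpotent. Choose a finite index subgroup $\Gamma' \le \pi_1(X)$ which is solvable; it corresponds to a finite \'etale cover $X' \to X$ with $\pi_1(X')=\Gamma'$, and $X'$ is again smooth quasi-projective. Since a group is virtually nilpotent if and only if some (equivalently, every) finite index subgroup is, $\Gamma'$ is solvable and not virtually nilpotent. Applying Theorem \ref{solvable quasi-proj} to the identity homomorphism $\phi\colon \pi_1(X') \to \Gamma'$, we obtain a finite \'etale cover $p\colon X_1 \to X'$ and a surjective fibration $f\colon X_1 \to \mathcal{C}$ onto a smooth hyperbolic orbicurve $\mathcal{C}$.

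By the Lemma on quasi-projective orbicurves, $f$ induces a surjection $f_*\colon \pi_1(X_1) \twoheadrightarrow \pi_1^{\orb}(\mathcal{C})$. Now $\pi_1(X_1)$ is a finite index subgroup of $\pi_1(X)$, hence virtually solvable, and therefore so is its quotient $\pi_1^{\orb}(\mathcal{C})$. On the other hand, by Lemma \ref{uniformisation} the group $\pi_1^{\orb}(\mathcal{C})$ is isomorphic to a finitely generated discrete subgroup $\Delta \subset \operatorname{PSL}_2(\R)$ acting properly discontinuously on $\mathfrak{H}$ with finite stabilisers and with $[\Delta \setminus \mathfrak{H}]$ hyperbolic; such a group contains a nonabelian free subgroup (for instance, a surface subgroup of genus at least $2$), so it is not virtually solvable. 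This contradiction proves that $\pi_1(X)$ is virtually nilpotent.
\end{proof}
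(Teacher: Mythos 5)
Your proposal is correct and follows essentially the same route as the paper: pass to a finite \'etale cover with solvable, non-virtually-nilpotent fundamental group, apply Theorem \ref{solvable quasi-proj} to the identity map to get a further cover fibring over a hyperbolic orbicurve, and derive a contradiction because the induced surjection onto $\pi_1^{\orb}(\mathcal{C})$ (a non-elementary Fuchsian group, hence not virtually solvable) cannot have virtually solvable source. The only cosmetic difference is that the paper argues with plain solvability of the quotient while you use virtual solvability plus the free-subgroup fact, which is equally valid.
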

\begin{proof}
Assume $\pi_1(X)$ is virtually solvable, but not virtually nilpotent. Let $X_1 \to X$ be a finite \'etale cover such that $\pi_1(X_1)$ is solvable but not virtually nilpotent. Apply Theorem \ref{solvable quasi-proj} to the identity homomorphism $\operatorname{id} \colon \pi_1(X) \to \pi_1(X)$. We deduce that there exists a finite \'etale cover $X_2 \to X_1$, a hyperbolic orbifold curve $\mathcal{C}$ and a fibration $f \colon X_2 \to \mathcal{C}$ that yields a surjective homomorphism $f_* \colon \pi_1(X_2) \to \pi_1^{\orb}(\mathcal{C})$.  The orbicurve $\mathcal{C}$ is hyperbolic, hence $\pi_1^{\orb}(\mathcal{C})$ embeds as a lattice in $\operatorname{PSL}_2(\R)$ (Lemma \ref{uniformisation}). On the other hand, it is obtained as a quotient of a solvable group $\pi_1(X_2)$. Thus, the group $\pi_1^{\orb}(\mathcal{C})$ has to be simoultaneously solvable and Zariski dense in $\operatorname{PSL}_2(\R)$, which is impossible.
\end{proof}

\begin{proof}[Proof of Theorem \ref{solvable quasi-proj}]
Let $Q \twoheadrightarrow M_0$ be a virtually metabelian quotient that is not virtually nilpotent (it exists by Theorem \ref{metabelian}). Let $M_1 \subset M_0$ be a metabelian finite index subgroup. Let $\Gamma_1 \subset \Gamma:=\pi_1(X)$ the preimage of $M_1$. Let $X_1 \xrightarrow{p} X$ be the corresponding finite \'etale cover, so that one has $p_*(\pi_1(X_1))=\Gamma_1$. From Corollary \ref{dichotomy} we know, that there are two distinct cases:
\\

\emph{Case 1.} $DM_1$ is not finitely generated. Denote the homomorphism $\Gamma_1 \to M_1$ by $\phi$. Choose a generic class $\alpha \in H^1(M_1, \R)=(M_1/DM_1) \o_{\Z} \R$. Then the kernel of  $\alpha \colon M_1 \to \R$ coincides with $DM_1$. By Theorem \ref{kernel} the ray $[\alpha]$ is not in $\Sigma(M_1)$. By Proposition \ref{bns pullback} the ray $[\phi^*\alpha]$ is not in $\Sigma(\Gamma_1)$. Therefore $X_1$ admits a fibration over a hyperbolic curve $\mathcal{C}$ by Theorem \ref{bns quasi-proj}.
\\

\emph{Case 2.} $DM_1$ is finitely generated. Consider the decomposition of $DM_1 \o \C$ into irreducible $M_1^{\operatorname{ab}}$ modules 
\[
DM_1\o\C=\bigoplus \C_{\theta_i}.
\]
Every $\theta_i$ belongs to the cohomology jump locus  $\mathcal{V}^1(M_1)$ of $M_1$ (Lemma \ref{characters}). 

At least one of those characters, say $\theta=\theta_i$, is not torsion, otherwise $M_1$ is virtually nilpotent. It pullbacks to an exceptional character $\phi^*\theta \in \mathcal{V}^1(\Gamma_1)$ that is again non-torsion. By Theorem \ref{GL set}  $\phi^*\theta$ compes from a morphism to an orbicurve $X_1 \to \mathcal{C}$. 
\end{proof}

\section{Applications}\label{applications}

\subsection{Fundamental groups of special quasi-projective varieties}\label{special}

\begin{df}[\cite{CDY}, Definition 1.10.]\label{campana special}
Let $X$ be a normal quasi-projective variety. One says that $X$ is \emph{weakly special} if for any finite \'etale cover $X' \to X$ and a proper birational modification $\widehat{X'} \to X'$ there exists no surjective morphism with connected general fibre $\widehat{X'} \to Y$ to a quasi-projective normal variety of general type.
\end{df}

This definition is in fact a refined version of the notion of a \emph{Campana special variety} that plays a crucial role in the Campana's classification program (\cite{Camp04}, see also the discussion in \cite[Subsection 1.5]{CDY}). Conjecturally, the property of being weak specialness is equivalent to a more analytic-flavoured property of being \emph{$h$-special}  

\begin{df}[(\cite{CDY},Definition 1.11]\label{h-special}
Let $X$ be an algebraic variety. Consider the relation $R \subset X \times X$ where $(x, y) \in R$ if and only if $x$ and $y$ can be connected by a chain of closed subvarieties $Z_1, \ldots, Z_l \subset X$ such that each $Z_j$ is obtained as the Zariski closure of the image of a holomorphic map $f_j \colon \C \to X$. One says that $X$ is \emph{$h$-special} if $R \subset X \times X$ is Zariski dense.
\end{df}
Roughly speaking, this means that the entire curves in $X$ are uniformly distributed with respect to the Zariski topology. For instance, this is the case if $X$ admits a Zariski-dense entire curve.

Campana conjectured (\cite{Camp04}) that the fundamental group of a weakly special or $h$-special smooth projective variety $X$ is virtually abelian. Cadorel, Deng and Yamanoi observed that there exists a weakly special and $h$-special smooth quasi-projective variety with nilpotent non-abelian fundamental group  (\cite[Example 11.26]{CDY}). Therefore, it is reasonable to conjecture that weakly special and $h$-special quasi-projective varieties should have virtually nilpotent fundamental groups (Conjecture 11.1 ibid.). The same authors proved the following theorem (Theorem 11.2 ibid.):

\begin{thrm}[Cadorel-Deng-Yamanoi]\label{cdy theorem}
Let $X$ be a smooth weakly special or $h$-special quasi-projective variety. Let $\rho \colon \pi_1(X) \to \GL_r(\C)$ be a linear representation of the fundamental group of $X$. Then $\rho(\pi_1(X))$ is virtually nilpotent.
\end{thrm}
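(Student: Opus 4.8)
The plan is to combine the Tits alternative with Theorem~\ref{solvable quasi-proj} of the present paper for the solvable part of the statement, and to invoke the non-abelian Hodge-theoretic reduction theory of Cadorel-Deng-Yamanoi for the rest. First I would record the common endpoint of both halves of the argument: if some finite \'etale cover $X' \to X$ admits a surjective morphism with connected fibres $g \colon X' \to Y$ onto a positive-dimensional quasi-projective variety $Y$ that is of general type and carries no non-constant entire curve --- a hyperbolic orbicurve, after killing its orbifold structure, being the basic example --- then $X$ is neither weakly special nor $h$-special. For weak specialness this is immediate from Definition~\ref{campana special}, since weak specialness passes to finite \'etale covers. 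For $h$-specialness, any $f_j \colon \C \to X'$ as in Definition~\ref{h-special} composes with $g$ to a constant map, so the connecting chains all lie in fibres of $g$; hence the relation $R$ --- which lifts from $X$ to $X'$ --- is contained in the fibre product $X' \times_Y X'$, of dimension $2\dim X' - \dim Y < 2\dim X'$, and therefore is not Zariski dense, a contradiction.

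With this reduction in hand, I would set $G := \rho(\pi_1(X)) \subseteq \GL_r(\C)$, which is finitely generated, and split along the Tits alternative: either $G$ is virtually solvable, or $G$ contains a non-abelian free subgroup; the two are mutually exclusive. In the virtually solvable case, suppose for contradiction that $G$ is not virtually nilpotent. Choose a finite-index solvable subgroup $G_0 \le G$ (still not virtually nilpotent) and let $X_1 \to X$ be the corresponding finite \'etale cover, so that $\pi_1(X_1)$ surjects onto the finitely generated solvable group $G_0$. Theorem~\ref{solvable quasi-proj} then furnishes a finite \'etale cover $X_2 \to X_1$ and a fibration onto a hyperbolic orbicurve, and by uniformisation (Lemma~\ref{uniformisation}, together with Selberg's lemma) one further finite \'etale cover makes the target an honest smooth curve of negative Euler characteristic, hence of general type; the first paragraph yields the desired contradiction, so $G$ is virtually nilpotent. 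I would note that this half of the argument applies verbatim to any finitely generated solvable quotient $Q$ of $\pi_1(X)$, which is Corollary~\ref{special theorem}.

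It remains to exclude the case that $G$ is not virtually solvable, and this is where the genuine difficulty lies. In that case the Zariski closure of $G$ in $\GL_r(\C)$ has non-trivial semisimple part, so after passing to a finite \'etale cover and replacing $\rho$ by its semisimplification one obtains a reductive representation of the fundamental group with infinite, non-virtually-abelian image; the quasi-projective ``Shafarevich reduction'' of \cite{CDY} --- built on the Eyssidieux-Katzarkov-Pantev-Ramachandran construction in the projective case and on Corlette-Simpson's analysis of local systems on quasi-projective varieties --- then produces, after a further finite \'etale cover, a proper fibration onto a positive-dimensional quasi-projective variety of general type that carries no non-constant entire curve, and the first paragraph again closes the argument. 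I expect this last case to be the main obstacle: constructing the reduction map over a quasi-projective base and controlling the hyperbolicity of its target is the technical heart of \cite{CDY} and lies well beyond the BNS-theoretic methods developed here, whose contribution is precisely the now-elementary treatment of the solvable case.
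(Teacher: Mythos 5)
There is no proof of this statement in the paper to compare against: Theorem~\ref{cdy theorem} is quoted verbatim from \cite{CDY} (Theorem 11.2 there), and the paper only sketches their two-step strategy --- (1) reductive representations of $\pi_1(X)$ have virtually abelian image, via the quasi-projective Shafarevich-reduction machinery, and (2) Zariski-dense representations into connected solvable groups have nilpotent target (their Theorem 11.3). Your proposal reproduces exactly this architecture, with step (2) replaced by Theorem~\ref{solvable quasi-proj} of the present paper; that substitution is legitimate and is precisely what the paper itself does in Corollary~\ref{special theorem}, which it notes implies (and strengthens) CDY's Theorem 11.3. Your Tits-alternative split and the derivation of a reductive, non-virtually-solvable representation from a non-virtually-solvable image are sound (projecting the Zariski closure onto its quotient by the unipotent radical is the cleaner way to phrase the ``semisimplification'' step), and your first paragraph on why a fibration over a hyperbolic orbicurve on a finite \'etale cover kills both weak specialness and $h$-specialness is the same argument as in the paper's proof of Corollary~\ref{special theorem} --- though note that the relation $R$ of Definition~\ref{h-special} lives on $X$, not on the cover, so the clean statement is that every entire curve in $X$ lifts (as $\C$ is simply connected) into a fibre of the orbicurve fibration on $X'$, whence $R$ lies in the image of $X'\times_{\mathcal{C}}X'$ in $X\times X$, which has dimension at most $2\dim X-1$ and so is not Zariski dense. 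The only substantive caveat is that your treatment of the non-virtually-solvable case is a citation of the Shafarevich-reduction theorem of \cite{CDY} rather than a proof; since that is the technical heart of the very theorem being stated (and the paper likewise treats it as an external input), this is acceptable as a derivation ``modulo \cite{CDY}'', but it should be presented as such and not as an independent argument.
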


The proof of Theorem \ref{cdy theorem} consists of two steps. First, they prove that every reductive representation of $\pi_1(X)$ is virtually abelian. This is based on the machinery of Shafarevich reductions for quasi-projective varieties developed in their work \cite{CDY}. In the second step, they show that if $\rho \colon \pi_1(X) \to G$ is a Zariski dense representation to a solvable connected algebraic group $G$ over $\C$, then $G$ is nilpotent (Theorem 11.3. ibid.).

Using our results, we can strengthen Theorem 11.3 of \cite{CDY} as follows.

\begin{cor}\label{special theorem}
Let $X$ be a smooth quasi-projective variety, either weakly special or $h$-special. Let $Q$ be a finitely generated solvable group and $\rho \colon \pi_1(X) \to Q$ a surjective homomorphism. Then $Q$ is virtually nilpotent.
\end{cor}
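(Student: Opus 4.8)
The plan is to reduce Corollary \ref{special theorem} to Theorem \ref{solvable quasi-proj} together with the fact, established by Cadorel-Deng-Yamanoi, that a weakly special (or $h$-special) variety admits no fibration onto a hyperbolic orbicurve, even after passing to a finite \'etale cover. Concretely, suppose for contradiction that $Q$ is a finitely generated solvable quotient of $\pi_1(X)$ that is not virtually nilpotent. Then the hypotheses of Theorem \ref{solvable quasi-proj} are met, so there is a finite \'etale cover $p \colon X_1 \to X$ and a surjective fibration $f \colon X_1 \to \mathcal{C}$ onto a smooth hyperbolic orbicurve $\mathcal{C}$.

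The next step is to derive a contradiction with specialness. First I would note that weak specialness (Definition \ref{campana special}) and $h$-specialness (Definition \ref{h-special}) are preserved under finite \'etale covers: for weak specialness this is built into the definition, while for $h$-specialness one observes that entire curves lift to finite \'etale covers and Zariski density of the chain-connecting relation is inherited (this is part of \cite{CDY}). Hence $X_1$ is again weakly special, respectively $h$-special. Now the coarse space $C$ of $\mathcal{C}$ is a quasi-projective curve, and since $\mathcal{C}$ is hyperbolic, the orbifold base of $f$ has negative orbifold Euler characteristic; after possibly removing finitely many points and a further finite \'etale cover one obtains a fibration onto a smooth quasi-projective curve of general type (a hyperbolic curve, or one with an orbifold structure forcing general type in Campana's sense). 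This contradicts weak specialness of $X_1$ directly. In the $h$-special case one invokes instead the implication, also proved in \cite{CDY} (or deducible from Theorem \ref{cdy theorem} applied to the lattice representation $\pi_1(X_1) \twoheadrightarrow \pi_1^{\orb}(\mathcal{C}) \hookrightarrow \operatorname{PSL}_2(\R)$, which is not virtually nilpotent since $\pi_1^{\orb}(\mathcal{C})$ is a lattice in $\operatorname{PSL}_2(\R)$ by Lemma \ref{uniformisation}), that an $h$-special variety cannot surject onto a hyperbolic orbicurve.

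In fact the cleanest route, avoiding any separate geometric argument, is this: the composition $\pi_1(X_1) \xrightarrow{f_*} \pi_1^{\orb}(\mathcal{C})$ is surjective by the Lemma preceding Lemma \ref{uniformisation}, and $\pi_1^{\orb}(\mathcal{C})$ is isomorphic to a finitely generated lattice $\Delta \subset \operatorname{PSL}_2(\R)$ by Lemma \ref{uniformisation}. Such a lattice contains a non-abelian free subgroup, so it is certainly not virtually nilpotent, and it is linear, so $\rho \colon \pi_1(X_1) \twoheadrightarrow \Delta \hookrightarrow \operatorname{PSL}_2(\R) \subseteq \operatorname{PGL}_2(\C)$ is a linear representation with image not virtually nilpotent. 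But Theorem \ref{cdy theorem} (applied to $X_1$, which is still weakly special or $h$-special) says every linear representation of $\pi_1(X_1)$ has virtually nilpotent image — a contradiction. Therefore no such $Q$ exists, and every solvable quotient of $\pi_1(X)$ is virtually nilpotent.

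The main obstacle is purely bookkeeping: one must make sure that the hyperbolic orbicurve $\mathcal{C}$ produced by Theorem \ref{solvable quasi-proj} really does give a genuinely non-virtually-nilpotent (and, for the slick argument, linear) quotient, and that weak/$h$-specialness descends to the finite \'etale cover $X_1$; both are routine once one unwinds the definitions and Lemma \ref{uniformisation}. No analytic input beyond what is already quoted is needed — this corollary is an immediate formal consequence of Theorem \ref{solvable quasi-proj} and Theorem \ref{cdy theorem}.
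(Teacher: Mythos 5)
Your proposal is correct, and its first half is exactly the paper's: run Theorem \ref{solvable quasi-proj} on the hypothetical non-virtually-nilpotent solvable quotient to produce a finite \'etale cover $X_1 \to X$ with a fibration $f \colon X_1 \to \mathcal{C}$ onto a hyperbolic orbicurve. Where you diverge is in how the contradiction with specialness is closed. The paper stays elementary: the fibration (after the standard further \'etale cover killing the orbifold structure) violates weak specialness, and for $h$-specialness it only uses that a hyperbolic orbicurve carries no nonconstant entire curve, so every entire curve of $X_1$ -- and, after lifting, of $X$ -- lies in a fibre of $f$, which destroys Zariski density of the chain relation \emph{for $X$ itself}; in particular the paper never needs to know that $h$-specialness ascends to $X_1$. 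Your ``cleanest route'' instead feeds the lattice quotient $\pi_1(X_1) \twoheadrightarrow \pi_1^{\orb}(\mathcal{C}) \simeq \Delta \subset \operatorname{PSL}_2(\R)$ (linear after composing with the adjoint embedding into $\GL_3(\C)$, and not virtually nilpotent) into Theorem \ref{cdy theorem} applied to $X_1$. This is logically sound but has two costs. First, it requires $X_1$ to be weakly special or $h$-special: the weakly special case is immediate from the definition (a cover of a cover is a cover), but the ascent of $h$-specialness along a connected finite \'etale cover is asserted with a reference rather than proved; it is in fact true -- pass to a Galois closure, lift each chain to a chain of closures of lifted entire curves connecting \emph{some} choice of preimages, so the closure of the upstairs relation maps onto $X \times X$ under the finite map $p \times p$ and, $X_1 \times X_1$ being irreducible of the same dimension, must be all of $X_1 \times X_1$ -- but this short argument (or a precise citation) should be supplied. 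Second, your route makes the corollary rest on the full strength of Theorem \ref{cdy theorem}, i.e.\ on the CDY Shafarevich-type machinery; this is not circular (that theorem is proved independently in \cite{CDY}), but the paper's point is that the corollary \emph{strengthens} the CDY results while using only the hyperbolicity of $\mathcal{C}$, so the self-contained geometric argument is preferable even though your reduction is slicker on paper.
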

\begin{proof}
Suppose $Q$ is not virtually nilpotent. By Theorem \ref{solvable quasi-proj}  there exists a finite \'etale cover $X_1 \to X$ which admits a fibration over a hyperbolic orbicurve $f \colon X_1 \to \mathcal{C}$. Therefore, $X$ is not weakly special. Since $\mathcal{C}$ is hyperbolic, every holomorphic map $h \colon \C \to \mathcal{C}$ is constant, therefore every entire curve in $X_1$ is contained in a fibre of $f$. Thus, neither $X_1$, nor $X$ are $h$-special.
\end{proof}

Our Corollary \ref{special theorem} implies Theorem 11.3 in \cite{CDY}, but not vice versa: there exist finitely generated solvable groups that are not linear, see e.g. \cite{Khar}, \cite{RS}.

\subsection{The BNS sets and tropicalizations of cohomology jump loci}\label{tropical}

We introduced the cohomology jump loci in subsection \ref{metabelian}. Both the cohomology jump locus $\mathcal{V}^1 \subset \T(\Gamma)$ and the BNS set $\Sigma(\Gamma) \subset \mathbb{S}H^1(\Gamma, \R)$ are delicate invariants of a finitely generated group $\Gamma$ which are in general hard to study. Surprisingly, there is a beautiful relation between these two invariants which can be expressed via tropical geometry (\cite{PS10}, \cite{Suc21}).

We recall the basic definitions of tropical geometry following \cite{MS15}. Let 
$$
\mathbb{K}:=\C\{\{t\}\}=\bigcup_{n \ge 1} \C((t^{1/n}))
$$
be the field of Puiseux series. Its elements are of the form $c(t)=c_1t^{a_1}+c_2t^{a_2}+\ldots$, where  $c_i \in \C^{\times}$ and $a_1<a_2<\ldots$ are rationals with a common denominator. This field is endowed with a valuation $v^{\times k} \colon \mathbb{K}^{\times} \to \Q$ defined by
\[
v \colon \left ( c(t)=c_1t^{a_1}+c_2t^{a_2}+\ldots \right ) \mapsto a_1.
\]

It extends to a map $v \colon (\mathbb{K^{\times}})^k \to \Q^k$. The \emph{tropicalisation} of an algebraic subvariety $\mathcal{W} \subseteq ({\mathbb{K}}^{\times})^k$ is the closure $\operatorname{Trop}(\mathcal{W}):=\overline{v(\mathcal{W})}$ of $v(\mathcal{W})$ inside $\R^k$. If $W$ is an algebraic subvariety in $(\C^{\times})^k$, we define $\operatorname{Trop}(W):=\operatorname{Trop}(W \times_{\C} \mathbb{K})$. If $W \subset (\C^{\times})^k$ is a translate of a subgroup, then $\operatorname{Trop}(W) \subset \R^k$ is a linear subspace. 

 Let $X$ be a manifold with finitely generated fundamental group. Then 
 \[
 H^1(X, \mathbb{K}^{\times}) \simeq (\mathbb{K}^{\times})^k \times G,\]
 where $G$ is a finite commutative group. The tropicalisation map extends to 
\[
v_X \colon H^1(X, \mathbb{K}^{\times}) \xrightarrow{v} H^1(X, \Q) \to H^1(X, \R)
\]
by requiring that $v_X$ restricts to $v^{\times k}$ on every connected component $T\simeq (\mathbb{K}^{\times})^{k}$ inside $H^1(X, \mathbb{K}^{\times})$.

For a subvariety $W \subset \T(X):=H^1(X, \C^{\times})$ one defines $\operatorname{Trop}(W):=\operatorname{Trop}(W) \times_{\C} \mathbb{K}$ as before.

Notice that  $\Trop(W)=\bigcup_{g \in G} \Trop(W \cap T_g)$, where the union is taken over all connected components $T_g \subset \T(X)$. In particular, if $W$ contains a connected component of $\T(X)$, then $\Trop(W)=\Trop(\T(X))=H^1(X, \R)$.

This operation is functorial with respect to $X$. Namely, if $f \colon Y \to X$ is a continuous map, then $f^*(\operatorname{Trop}(W))=\operatorname{Trop}(f^*W)$ for a subvariety $W \subset H^1(X, \mathbb{C}^{\times})$.

If $\rho \in \T(X)$, then $\Trop(\rho W)=\Trop(W)+v(\rho)=\Trop(W)$.

\begin{thrm}[Suciu's tropical bound, \cite{Suc21}]\label{suciu}
Let $X$ be a manifold with finitely generated fundamental group $\Gamma$. Let $\Sigma(\Gamma)\subset \mathbb{S}H^1(X,\R)$ be the BNS set and $\mathcal{V}^1\subset \T(X)$ be the first cohomology jump locus. Let $\iota \colon \mathbb{S}H^1(X, \R) \to \mathbb{S}H^1(X, \R)$ be the involution induced by $\chi \mapsto -\chi$. Then
\[
\iota(\Sigma(\Gamma)) \subseteq \mathbb{S}(\operatorname{Trop}(\mathcal{V}^1))^{\mathbf{c}}.
\]
Here $M^{\mathbf{c}}\subseteq \mathbb{S}H^1(X,\R)$ denotes the complement to a subset $M$.
\end{thrm}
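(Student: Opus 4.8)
The strategy is to combine Sikorav's homological description of the BNS invariant with a flat base change relating twisted cohomology over the Puiseux field $\mathbb{K}$ to Novikov homology. For a finitely generated group $\Gamma$ and an additive character $\psi\colon\Gamma\to\R$, let $\widehat{\Z\Gamma}_\psi$ denote the Novikov--Sikorav completion of $\Z\Gamma$, consisting of formal sums $\sum_{\gamma}a_\gamma\gamma$ whose support meets every half-space $\{\gamma:\psi(\gamma)\le c\}$ in a finite set. Sikorav's theorem (\cite{Sik}) says that $[\psi]\in\Sigma(\Gamma)$ if and only if $H_1(\Gamma;\widehat{\Z\Gamma}_\psi)=0$, the vanishing of $H_0$ being automatic for $\psi\neq0$. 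Taking contrapositives, the theorem is equivalent to the implication: if $0\neq\xi\in\Trop(\mathcal{V}^1(\Gamma))$ then $H_1(\Gamma;\widehat{\Z\Gamma}_{-\xi})\neq0$.

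The first step is a reduction to a single well-behaved class. Since $\mathcal{V}^1(\Gamma)$ is defined over $\C$, its tropicalisation $\Trop(\mathcal{V}^1(\Gamma))$ is a rational polyhedral fan (\cite{MS15}) in which the valuation image $v(\mathcal{V}^1(\Gamma))$ is dense; as $\iota(\Sigma(\Gamma))$ is open in $\mathbb{S}H^1(\Gamma,\R)$, if the asserted inclusion failed we could find an honest $\mathbb{K}$-point $\rho\in\mathcal{V}^1(\Gamma)(\mathbb{K})$ with $0\neq\xi:=v(\rho)\in H^1(\Gamma,\Q)$ and $-[\xi]\in\Sigma(\Gamma)$. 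Writing $\mathbb{K}_\rho$ for the rank-one $\mathbb{K}[\Gamma]$-module on which $\gamma$ acts by $\rho(\gamma)$, the hypothesis $\rho\in\mathcal{V}^1(\Gamma)$ gives $H^1(\Gamma;\mathbb{K}_\rho)\neq0$; by the universal coefficient theorem over the field $\mathbb{K}$ (using that $\Gamma$ admits a classifying space with finitely many cells in each degree $\le1$, and that $H_0(\Gamma;\mathbb{K}_{\rho^{-1}})=0$ since $\rho\neq1$) this yields $H_1(\Gamma;\mathbb{K}_{\rho^{-1}})\neq0$, with $v(\rho^{-1})=-\xi$. Decomposing $\rho$ along the torsion and identity components of $H^1(\Gamma,\mathbb{K}^\times)$ affects neither $\xi$ nor this non-vanishing, so one is reduced to showing: a character $\sigma\colon\Gamma\to\mathbb{K}^\times$ with $v\circ\sigma=-\xi$ and $H_1(\Gamma;\mathbb{K}_\sigma)\neq0$ forces $H_1(\Gamma;\widehat{\Z\Gamma}_{-\xi})\neq0$.

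The heart of the argument is the base change. Because $\xi$ is rational, the valuations $v(\sigma(\gamma))=-\xi(\gamma)$ all lie in $\tfrac1N\Z$ for a fixed $N$, so $\gamma\mapsto\sigma(\gamma)$ extends to a ring homomorphism $\widehat{\Z\Gamma}_{-\xi}\to\mathbb{K}$: a sum $\sum a_\gamma\gamma$ lies in $\widehat{\Z\Gamma}_{-\xi}$ precisely when its support is finite on each $\{-\xi\le c\}$, which is exactly the condition for $\sum a_\gamma\sigma(\gamma)$ to converge in $\mathbb{K}$ as a Puiseux series. Thus $\mathbb{K}_\sigma$ becomes a $\widehat{\Z\Gamma}_{-\xi}$-module. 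Now choose a free resolution $C_\bullet\to\Z$ of $\Z$ over $\Z\Gamma$ with $C_0$ and $C_1$ finitely generated. If $H_1(\Gamma;\widehat{\Z\Gamma}_{-\xi})$ vanished then, $H_0$ vanishing as well, the module of $1$-cycles of $C_\bullet\otimes_{\Z\Gamma}\widehat{\Z\Gamma}_{-\xi}$ would sit in a short exact sequence $0\to Z_1\to C_1\otimes\widehat{\Z\Gamma}_{-\xi}\to C_0\otimes\widehat{\Z\Gamma}_{-\xi}\to0$ with flat right-hand term; applying $(-)\otimes_{\widehat{\Z\Gamma}_{-\xi}}\mathbb{K}_\sigma$ preserves exactness, and chasing cycles and boundaries through the resulting diagram gives $H_1(\Gamma;\mathbb{K}_\sigma)=0$, a contradiction. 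Running the sign bookkeeping backwards through Sikorav's theorem and the universal coefficient step then gives $-[\chi]\notin\Trop(\mathcal{V}^1(\Gamma))$ for every $[\chi]\in\Sigma(\Gamma)$, which is the assertion.

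I expect the main obstacle to be the bookkeeping of signs and conventions: Sikorav's theorem must be invoked with the Novikov completion oriented so that the antipode $\iota$ in the statement emerges correctly (it enters through the passage from $H^1$ with $\mathbb{K}_\rho$-coefficients to $H_1$ with $\mathbb{K}_{\rho^{-1}}$-coefficients), and this interacts with the choice of half-space in the definition of $\widehat{\Z\Gamma}_\psi$. A more technical point is to verify carefully that $\gamma\mapsto\sigma(\gamma)$ is multiplicative, not merely additive, on the completed ring --- this uses that Puiseux valuations are bounded below on the relevant supports with finitely many terms at each level --- and to run the base-change step using only that $\Gamma$ is finitely generated, so that one may not freely identify homology with cohomology in degree one.
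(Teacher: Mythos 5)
This statement is not proved in the paper: it is quoted verbatim from Suciu's work \cite{Suc21} and used as an external ingredient (the paper's own contribution, Theorem \ref{suciu sharp}, is the reverse inclusion for quasi-projective $X$), so there is no internal proof to compare against. Your argument is, in outline, a faithful reconstruction of the proof in the cited source: Sikorav's Novikov-homological characterisation of $\Sigma^1$, the specialisation homomorphism $\widehat{\Z\Gamma}_{-\xi}\to\mathbb{K}$ induced by a $\mathbb{K}$-point of the character torus with $v(\rho)=-\xi$, and a flat base-change to transfer vanishing of Novikov homology to vanishing of $H_1(\Gamma;\mathbb{K}_\sigma)$, with the reduction to $\mathbb{K}$-points done via density of $v(\mathcal{V}^1(\mathbb{K}))$ in the tropicalisation and openness of $\Sigma(\Gamma)$. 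It is correct as written, modulo two points you use silently and should cite or check: openness of $\Sigma(\Gamma)$ in the character sphere (a theorem of Bieri--Neumann--Strebel), and the compatibility of the jump locus with extension of scalars, i.e.\ that a $\mathbb{K}$-point of $\mathcal{V}^1\times_\C\mathbb{K}$ really satisfies $H^1(\Gamma;\mathbb{K}_\rho)\neq 0$ (for merely finitely generated $\Gamma$ this follows from the determinantal description of the rank conditions, since vanishing of the defining minors is insensitive to the field), besides the sign and sidedness conventions you already flag.
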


\begin{rmk}
In fact, Suciu proved a much more general statement: for a manifold $X$ he showed similar inclusion for \emph{higher integral BNRS invariants} $\Sigma^r(X, \Z)$ and the tropicalisations of \emph{higher characteristic varieties} $\mathcal{V}^{\le q}(X)$. This goes far beyond the content of our paper, so we refer an interested reader to \cite{Suc21} for further discussion.
\end{rmk}

In general, the inclusion in Theorem \ref{suciu} is strict. For instance, this is the case for the Baumslag-Soliter group $\operatorname{BS}_{1,2}$ or some fundamental groups of $3$-manifolds (see \cite[Example 8.2.]{Suc21},  and Section 9 \textit{ibid.}). 

Suciu showed that this inclusion is in fact equality if $X$ is a compact K\"ahler manifold, and asked whether the same is true for hyperplane arrangements (Theorem 13.2 and Question 13.7 \textit{ibid}).  We answer his question affirmatively by the following Theorem:

\begin{thrm}\label{suciu sharp}
Let $X$ be a smooth quasi-projective variety and $\Gamma:=\pi_1(X)$. Then
\[
\iota(\Sigma(\Gamma))=\mathbb{S}(\operatorname{Trop}(\mathcal{V}^1(\Gamma))^{\mathbf{c}}
\]
\end{thrm}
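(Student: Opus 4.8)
The inclusion $\iota(\Sigma(\Gamma))\subseteq\mathbb{S}\bigl(\operatorname{Trop}(\mathcal{V}^1(\Gamma))\bigr)^{\mathbf c}$ is exactly Suciu's bound (Theorem \ref{suciu}), valid for any finitely generated group, so the plan is to establish the opposite inclusion. First I would record that in the present setting $\Sigma(\Gamma)$ is $\iota$-invariant: by Theorem \ref{bns quasi-proj} the complement of $\Sigma(\Gamma)$ in $\mathbb{S}H^1(\Gamma,\R)$ is the union, over the finitely many algebraic fibrations $f_i\colon X\to\mathcal C_i$ onto hyperbolic orbicurves, of the spherisations of the subspaces $\operatorname{im}\bigl(f_i^*\colon H^1(\pi_1^{\orb}(\mathcal C_i),\R)\to H^1(\Gamma,\R)\bigr)$, and a linear subspace is preserved by $\chi\mapsto-\chi$. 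Hence the asserted equality is equivalent to $\mathbb{S}\bigl(\operatorname{Trop}(\mathcal{V}^1(\Gamma))\bigr)^{\mathbf c}\subseteq\Sigma(\Gamma)$, that is, passing to complements, to the statement: if $[\chi]\notin\Sigma(\Gamma)$ then $\chi\in\operatorname{Trop}(\mathcal{V}^1(\Gamma))$.

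So let $0\neq\chi\in H^1(\Gamma,\R)$ with $[\chi]\notin\Sigma(\Gamma)$. Theorem \ref{bns quasi-proj} provides an algebraic fibration $f\colon X\to\mathcal C$ onto a quasi-projective hyperbolic orbicurve, with $f_*\colon\Gamma\twoheadrightarrow\pi_1^{\orb}(\mathcal C)$ surjective, and a class $\chi_0\in H^1(\pi_1^{\orb}(\mathcal C),\R)$ with $\chi=f^*\chi_0$. Since $f_*$ is surjective, $f^*\bigl(\mathcal{V}^1(\pi_1^{\orb}(\mathcal C))\bigr)\subseteq\mathcal{V}^1(\Gamma)$, and as tropicalisation is functorial for such pullbacks, $f^*\bigl(\operatorname{Trop}(\mathcal{V}^1(\pi_1^{\orb}(\mathcal C)))\bigr)=\operatorname{Trop}\bigl(f^*\mathcal{V}^1(\pi_1^{\orb}(\mathcal C))\bigr)\subseteq\operatorname{Trop}(\mathcal{V}^1(\Gamma))$. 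Everything therefore reduces to the claim: \emph{for every hyperbolic quasi-projective orbicurve $\mathcal C$ one has $\operatorname{Trop}\bigl(\mathcal{V}^1(\pi_1^{\orb}(\mathcal C))\bigr)=H^1(\pi_1^{\orb}(\mathcal C),\R)$.} Granting it, $\chi_0$ lies in $\operatorname{Trop}(\mathcal{V}^1(\pi_1^{\orb}(\mathcal C)))$, whence $\chi=f^*\chi_0\in\operatorname{Trop}(\mathcal{V}^1(\Gamma))$.

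For the claim, set $G=\pi_1^{\orb}(\mathcal C)$; by Lemma \ref{uniformisation} it is a finitely generated Fuchsian group, in particular virtually the fundamental group of a (closed or punctured) surface, so $H^i(G,M)=0$ for $i>2$ and, via transfer, $H^2(G,M)=0$ unless the coarse space $C$ is compact. Since $\operatorname{Trop}$ is monotone and the tropicalisation of any connected component of $\T(G)$ is already all of $H^1(G,\R)$ (subsection \ref{tropical}), it suffices to exhibit $\rho_0\in\T(G)$ with $H^1(G,\C_\rho)\neq0$ for every $\rho$ in the component $\rho_0\,\T^0(G)$, where $\T^0(G)$ is the identity component. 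A Mayer–Vietoris computation — decomposing $\mathcal C$ into the complement of its cone points and small orbifold disks, and using that finite cyclic groups have no complex cohomology in positive degrees — yields $\sum_i(-1)^i\dim_\C H^i(G,\C_\rho)=\chi_{\operatorname{top}}(C)-\#\{\,p:\rho(\gamma_p)\neq1\,\}$, so $H^1(G,\C_\rho)\neq0$ whenever $\#\{\,p:\rho(\gamma_p)\neq1\,\}>\chi_{\operatorname{top}}(C)$. If $\chi_{\operatorname{top}}(C)<0$ this holds for every $\rho$, so $\mathcal{V}^1(G)=\T(G)$ and we are done; if $\chi_{\operatorname{top}}(C)=0$ (then, by hyperbolicity, $\mathcal C$ has cone points and $C\cong\C^\times$ or $C$ is elliptic) one instead needs a character $\rho_0$ that is nontrivial on enough of the cone-point loops. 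When $C\cong\C^\times$ these loops are part of a free basis of $\pi_1(C\setminus P)$ and such $\rho_0$ is immediate. The genuinely delicate case is $g(C)=1$, where the relation $\sum_i\gamma_{p_i}=0$ in $H_1(G)$ can force all the $\gamma_{p_i}$ into the torsion of $H_1(G)$; here I would use the extra information furnished by the proof of Theorem \ref{bns quasi-proj} — that $C$ carries a finite morphism onto the one-dimensional semiabelian variety $A_\chi$ — to constrain the cone-point multiplicities and produce the required $\rho_0$. I expect this genus-one situation to be the main obstacle; the surrounding bookkeeping with complements, the antipodal involution, and the functoriality of $\operatorname{Trop}$ and of $\mathcal{V}^1$ under surjections is routine.
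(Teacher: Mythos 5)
Your overall reduction is sound and in substance parallels the paper's argument: the paper also boils the theorem down to the single statement that for every hyperbolic quasi-projective orbicurve $\mathcal{C}$ occurring as an orbifold base one has $\Trop\bigl(\mathcal{V}^1(\pi_1^{\orb}(\mathcal{C}))\bigr)=H^1(\pi_1^{\orb}(\mathcal{C}),\R)$ (the paper gets both inclusions at once by computing $\Trop(\mathcal{V}^1(X))$ from the decomposition of Theorem \ref{GL set}, rather than quoting Suciu's inequality, but that difference is cosmetic). The genuine problem is that you leave exactly this key statement unproved in the compact genus-one case, and the repair you sketch cannot work. The paper does not prove this statement by hand either: at this point it invokes the result of Artal Bartolo--Cogolludo--Matei that $\mathcal{V}^1(\pi_1^{\orb}(\mathcal{C}))$ contains a connected component of $\T(\mathcal{C})$, and your elementary character-hunting argument is not a substitute for it. Indeed, for an elliptic coarse space the required character $\rho_0$, nontrivial on some cone loop, need not exist at all: for the hyperbolic orbicurve with one cone point of multiplicity $m$ the group is $G=\langle a,b\mid [a,b]^m\rangle$, the cone loop is a product of commutators, so \emph{every} character of $G$ kills it, and a direct Fox-calculus computation (the cocycle condition for $\rho=(s,t)\neq(1,1)$ reads $(1-t)x+(s-1)y=0$, which is exactly the line of principal cocycles) gives $H^1(G,\C_\rho)=0$ for all nontrivial $\rho$; the same phenomenon occurs, e.g., for two cone points of coprime multiplicities, where the torsion of $G^{\mathrm{ab}}$ is trivial. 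So on such orbicurve groups your Euler-characteristic criterion yields nothing, and the statement you reduced to cannot be established by exhibiting characters on the orbicurve alone.

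Moreover, the specific fix you propose -- using that $C$ admits a finite morphism onto the one-dimensional semiabelian variety $A_\chi$ -- gives no leverage: in the genus-one compact case this is just an isogeny of elliptic curves and places no constraint whatsoever on the number or multiplicities of the cone points. To close the argument you would either have to import the ACM statement the paper cites (or, more precisely, the theory of torsion-translated components of $\mathcal{V}^1(X)$ itself, applied at the level of $X$ rather than of $\pi_1^{\orb}(\mathcal{C})$), or show that the problematic genus-one orbifold bases cannot occur in the relevant decomposition. As written, the final third of your proposal is therefore a genuine gap, not routine bookkeeping.
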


\begin{proof}
First notice that $\Sigma(\Gamma)$ is $\iota$-invariant by Theorem \ref{bns quasi-proj main}. Therefore, it is sufficient to prove the same statement for $\Sigma(\Gamma)$. 

Recall that for a quasi-projective variety $X$ there exists only finitely many holomorphic fibrations over hyperbolic orbifold quasi-projective curves $f \colon X \to \mathcal{C}$ (\cite[Proposition 2.8.]{CS}). Denote the set of such pairs $(f, \mathcal{C})$ by $\operatorname{Hyp}(X)$.

Let 
\[
\mathcal{H}_{\R}(X):=\bigcup_{(f, \mathcal{C}) \in \operatorname{Hyp}(X)} f^*H^1(\pi_1^{\orb}(\mathcal{C}),\R).
\]
This is a finite collection of subspaces in $H^1(X, \R)$.

Theorem \ref{bns quasi-proj main} can be stated as:
\[
\Sigma(\Gamma)=\mathbb{S}(\mathcal{H}_{\R}(X))^{\mathbf{c}}.
\]

Therefore, it is enough to prove that $\operatorname{Trop}(\mathcal{V}^1(X))=\mathcal{H}_{\R}(X)$.

Let $\mathcal{C}$ be a holomorphic fibration over a hyperbolic orbifold quasi-projective curve. Then $\mathcal{V}^1(\pi_1^{\orb}(\mathcal{C}))$  contains a connected component of $\T(\mathcal{C})$ (\cite{ACM}). By the discussion above,  $\Trop(\mathcal{V}^1(\mathcal{C}))=H^1(\mathcal{C},\R)$. By Theorem \ref{GL set},
\[
\mathcal{V}^1(X) = \left (\bigcup_{(f, \mathcal{C})\in \operatorname{Hyp}(X)} f^*\mathcal{V}^1(\pi_1^{\orb}(\mathcal{C}) \right ) \cup R,
\]
where $R$ is a finite set of  (torsion) characters. One has $\Trop(R)=0$ and
\[
\Trop(\mathcal{V}^1(X))=\Trop\left (\bigcup_{(f, \mathcal{C})\in \operatorname{Hyp}(X)} f^*\mathcal{V}^1(\pi_1^{\orb}(\mathcal{C}) \right ) \cup \Trop(R)=
\]
\[=\bigcup_{(f, C) \in \operatorname{Hyp}(X)} f^*\Trop(\mathcal{V}^1(\pi_1^{\orb}(\mathcal{C})) \cup \{0\} = \bigcup_{(f, C) \in \operatorname{Hyp}(X)} H^1(\mathcal{C},\R) = \mathcal{H}_{\R}(X),
\]
as required.
\end{proof}

\appendix

\section{The Simpson's Lefschetz Theorem for quasi-projective varieties}\label{simpson lefschetz proof}
Let $U$ be a topological space and $V \subset U$ a subspace. Recall that the pair $(U, V)$ is called \emph{k-connected} if the inclusion $V \hookrightarrow U$ induces isomorphisms on homotopy groups $\pi_i(V) \to \pi_i(U)$ for $i<k$ and a surjection for $i=k$.

We aim to prove the following theorem.

\begin{thrm}\label{simpson lefschetz A}
Let $X$ be a smooth quasi-projective variety and $\chi \in H^1(X, \C)$ a nonzero class. Denote by $\psi \colon X \to A$  the associated Albanese map. Let $\widetilde{A} \to A$ be the universal cover of $A_{\chi}$ and $Z:=X \times_{A_{\chi}} \widetilde{A}$. The latter is a connected cover of $X$ endowed with a projection $\pi \colon Z \to X$. Let $\omega$ be a holomorphic $1$-form with logarithmic poles at infinity such that $\chi=[\omega]$, so that $\pi^*\omega$ is exact and can be written as $\pi^*\omega=dg$, where $g \colon Z \to \C$ is a holomorphic function. Denote $h:=\operatorname{Re}(g)$. Let $\mathcal{P}(X, \chi) \subseteq \C$ be the set of critical values of $g$.

Assume that $\dim \psi(X) \ge 2$. Then the following holds:

\begin{itemize}
\item[(i)] the pair $(Z, g^{-1}(v))$ is $1$-connected for every $v \in \C\setminus \mathcal{P}$;
\item[(ii)] the pair $(Z,h^{-1}(r))$ is $1$-connected for every $r \in \R\setminus \operatorname{Re}\mathcal{P}$.
\end{itemize}
\end{thrm}

To prove Theorem \ref{simpson lefschetz A} we use a combination of a Lefschetz hyperplane section trick from \cite{CS} and the original Morse-theoretic arguments of \cite{Simp} with simplifications as in \cite{RG}.

\subsection{Relative homotopy theory}\label{homotopy s}

We start by recalling some elementary topological facts. Recall that a pair of topological spaces $(U, V), \ V \subseteq U$ is called $k$-connected if $\pi_i(V) \to \pi_i(U)$ is an isomorphism for $i<k$ and is surjective for $i=k$. Therefore, if $U$ is connected, a pair $(U, V)$ is $1$-connected if and only if $V$ is connected and $\pi_1(V) \to \pi_1(U)$ surjects.
\begin{prop}\label{homotopy}
Suppose $(U, V)$ is a pair of topological spaces, i.e. $V \subset U$.
\begin{itemize}
\item[(i)] Suppose that $W \subset V$ and $(V, W)$ is $k$-connected. Then $(U,V)$ is $k$-connected if and only if $(U, W)$ is $k$-connected (\emph{Transitivity});
\item[(ii)] Suppose $U' \subset U$ and $V'\subset U' \cap V$. Suppose that there exists a continuous homotopy retraction of $U$ on $U'$ that retracts $V$ on $V'$. Then $(U, V)$ is $k$-conneccted if and only if $(U', V')$ is (\emph{Deformation});
\item[(iii)] Suppose that $W \subset V$ and $\overline{W} \cap \overline{U \setminus V} = \varnothing$.  Suppose that the $U$-closures $\overline{W}$ and $\overline{U \setminus V}$ are disjoint. Then $(U, V)$ is $k$-connected if and only if $(U \setminus W, V \setminus W)$ is (\emph{Excision});
\item[(iv)] Suppose that $W \subset V$ and there exists a subset $W' \subset W$ such that $\overline{W'} \cap \overline{U \setminus V} = \varnothing$ and  the pair $(U \setminus W', V\setminus W')$ homotopically retracts to $(U \setminus W, V\setminus W)$. Then $(U, V)$ is $k$-connected if and only if $(U \setminus W, V \setminus W)$ is (\emph{Excision II})\footnote{This is a combination of \textit{(ii)} and \textit{(iii)}.}
\item[(v)] Suppose there is a sequence of open sets $V \subset U_0 \subset U_1 \subset U_2 \subset \ldots$ with $U=\bigcup_i U_i$ and $(U_i, V)$ is $k$-connected for every $k$. Then $(U, V)$ is $k$-connected (\emph{Exhaustion}).
\end{itemize}
\end{prop}

We will often use the following simple version of property \text{(i)} above.

\begin{prop}\label{small trans}
Suppose that we have a triple of connected spaces $W \subset U \subset V$  and $(U, W)$ is $1$-connected. Then $(U, V)$ is $1$-connected.
\end{prop}
\begin{proof}
It is enough to check that $\pi_1(V) \to \pi_1(U)$ is surjective. But $\pi_1(W) \to \pi_1(U)$ is surjective and factorises through $\pi_1(V)$.
\end{proof}

\subsection{Reduction to the case of a surface}\label{reduction}

We do several reductions that simplify the proof of Theorem \ref{simpson lefschetz A}. First, we show that the real case, in fact, follows from the complex one.

\begin{prop}
Suppose we are in the situation of Theorem \ref{simpson lefschetz A} and, moreover, the pair $(Z, g^{-1}(v))$ is $1$-connected for every $v \in \C \setminus \mathcal{P}$. Then $(Z, h^{-1}(r))$ is $1$-connected for every $r \in \R \setminus \operatorname{Re}\mathcal{P}$.
\end{prop}
\begin{proof}
Let
\[
I_r=r+\i\R=\{v \in \C \ | \ \operatorname{Re} v =r\}.
\]
Then $h^{-1}(r)=g^{-1}(I_r)$. Every $v \in I_r$ is a noncritical value of $g$, therefore, the pair $(Z, g^{-1}(v))$ is $1$-connected. It follows that $h^{-1}(r)$ maps surjectively to the line $I_r$ with connected fibres. Thus, $h^{-1}(r)$ is path-connected.  

Proposition \ref{small trans} concludes the proof.
\end{proof}

Further, we show that everything reduces to the two-dimensional case, and, moreover, one may assume that the map $\psi$ is finite on its image.

\begin{lemma}\label{hyperplane}
Suppose Theorem \ref{simpson lefschetz A} holds when $\dim X = 2$ and $\psi$ is equidimensional. Then it holds for any $X$ and $\psi$.
\end{lemma}
\begin{proof}
Let $X$ and $\psi$ be as in Theorem \ref{simpson lefschetz A}. We claim that there exists a smooth subvariety $Y \subseteq X$, passing through $x$ and satisfying the following properties:
\begin{itemize}
\item[(i)] $\dim Y=2$;
\item[(ii)] $\psi|_{Y}$ is equidimensional;
\item[(iii)] if $y \in Y$ and $\omega_y \neq 0$ then $\omega_y|_{T_{y}Y} \neq 0$
\item[(iv)] $\pi_1(Y) \to \pi_1(X)$ is surjective.
\end{itemize}
The subvariety $Y$ can be constructed as follows. Choose any point $x \in X$ where both $\omega$ and $d\psi$ are non-degenerate. Then take a projective embedding of $X$ of sufficiently high degree and find an intersection of $\dim X-2$ hyperplanes passing through $x$ transverse to both the fibre of $\psi$ and the integral leaf of $\omega$. We obtain a very ample surface $Y_1 \subseteq X$ which we, without loss of generality may assume to be smooth. The map $\pi_1(Y) \to \pi_1(X)$ is surjective by the quasi-projective version of Lefschetz Hyperplane Theorem (\cite{HL}). The properties \textit{(ii)} and \textit{(iii)} hold on a dense Zariski open subset $Y \subseteq Y_1$. The map $\pi_1(Y) \to \pi_1(Y_1)$ is surjective, thus $Y$ is as required.

It is also clear from the construction that one can construct a huge family of such surfaces; more precisely, for any two points $x, x' \in X \setminus \{\omega=0\}$ one is able to find a surface with the mentioned properties passing through them.

The restriction of $\psi$ on $Y$ is the Albanese morphism of $Y$ associated with $\chi|_Y$.  The surface $Y \subseteq X$ is transverse to the fibres of $\psi$, therefore $\psi|_{Y}$ is finite on its image and $\dim \psi(X) \ge 2$ if and only if $\dim \psi(Y) \ge 2$. 

Suppose $\dim \psi(X) \ge 2$. Let $Z_{Y}$ be the preimage of $Y$ in $Z$. Since $\pi_1(Y) \to \pi_1(X)$ is surjective, $Z_{Y} \to Y$ is a connected cover of $Y$ and $Z_Y=Y \times_A \widetilde{A}$. 
The surjectivity of $\pi_1(Y) \to \pi_1(X)$ implies that
\[
\pi_1(Z_Y)=\ker[\pi_1(Y) \to \pi_1(A)] \to \pi_1(Z)
\]
is surjective.  Property \textit{(iii)} above guarantees that the critical values of $g|_{Z_Y}$ are contained in the critical values of $g$, that is $\mathcal{P}(Y, \chi|_Y) \subseteq \mathcal{P}(X, \chi)$.

Let $v \in \C \setminus \mathcal{P}(X, \chi|_{Y})$. The pair $(Z_Y, g^{-1}(v) \cap Z_Y)$ is $1$-connected by the assumption. Therefore, the pair $(Z, g^{-1}(v)\cap Z_Y)$ is also $1$-connected. In particular, if $z \in g^{-1}(v) \cap Z_Y$, the composition
\[
\pi_1(g^{-1}(v)\cap Z_Y; z) \to \pi_1(g^{-1}(v);z) \to \pi_1(Z;z)
\]
is surjective. 

It remains to show that $g^{-1}(v)$ is connected. Let $z$ and $z'$ be two points in $g^{-1}(v)$. Denote by $x=\pi(z)$ and $x'=\pi(z')$ their images in $X$. We can find a new surface $Y'$ that satisfies the properties \textit{(i)-(iv)} and contains both $x$ and $x'$. Then $z$ and $z'$ both belong to $g^{-1}(v) \cap Z_{Y'}$ which is path-connected. In other words, we showed that every two point of $g^{-1}(v)$ lie in a path-connected subset of $g^{-1}(v)$. Therefore $g^{-1}(v)$ is path-connected.

The proof in the real case is the same.
\end{proof}

\subsection{Local topology around singularities}\label{local topology}

From now on, we always assume that we are in the situation of Theorem \ref{simpson lefschetz A} and, moreover, $\dim X = 2$ and $\psi$ is equidimensional.

Let $S:=\{\omega = 0\}$. Notice that $S=\{\operatorname{Re}\omega=0\}$.

The following is Lemma 2 in \cite{Simp}. We bring the proof here for the sake of completeness.

\begin{prop}
$S \subset X$ is an algebraic subvariety of $X$. Every irreducible component of $S$ is contained in a fibre of $\psi$.
\end{prop}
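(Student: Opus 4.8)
The plan is to reduce the statement to the vanishing locus of a translation--invariant holomorphic $1$--form pulled back from $A$, and then to play this off against the minimality built into the construction of $A_{\chi}$. First I would record that $S=\{\alpha^h=0\}$ in both cases, where $\alpha^h:=\psi^*\alpha^h_A$ and $\alpha^h_A$ is the holomorphic logarithmic $1$--form on $A$ with $\chi_A=[\alpha^h_A]$: in the complex case $\alpha^h=\alpha$, while in the real case $\alpha=\operatorname{Re}\alpha^h$ and a form of type $(1,0)$ vanishes at a point exactly when its real part does. By Proposition \ref{forms on semiabelian} every logarithmic $1$--form on $A$ is translation invariant, and invariant $1$--forms on an algebraic group are regular; hence $\alpha^h_A$ is an algebraic section of $\Omega^1_A$ and therefore $\alpha^h=\psi^*\alpha^h_A$ is an algebraic section of $\Omega^1_X$. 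Its zero scheme is a closed subscheme of $X$, so $S$ is Zariski closed, which settles the first assertion.

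For the second assertion I would fix an irreducible component $W\subseteq S$ and argue by contradiction, assuming $\psi|_W$ is non--constant. Since $\alpha^h_A$ is translation invariant and non--zero (as $\chi_A\neq 0$) it is nowhere vanishing on $A$, so $x\in S$ precisely when $\im(d\psi_x)\subseteq\ker\alpha^h_{A,\psi(x)}$. Set $Y:=\overline{\psi(W)}\subseteq A$, an irreducible subvariety with $\dim Y\geq 1$. For a general $x\in W$ generic smoothness gives $d\psi_x(T_xW)=T_{\psi(x)}Y$, hence $T_{\psi(x)}Y\subseteq\ker\alpha^h_{A,\psi(x)}$; since this holds on a dense subset, $\alpha^h_A$ restricts to zero on the smooth locus of $Y$.

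Next I would pass to the algebraic subgroup. Fix $y_0\in Y$ and let $A'\subseteq A$ be the closed algebraic subgroup generated by $Y_0:=Y-y_0$; since $Y_0$ is irreducible and contains $0$, $A'$ is connected (in fact $A'=Y_0+\dots+Y_0$ for sufficiently many copies), and $\dim A'\geq\dim Y\geq 1$. Because $\alpha^h_A$ is translation invariant and vanishes along $Y_0$, a tangent--space computation at a general smooth point $w_1+\dots+w_n$ of each iterated sumset shows $\alpha^h_A$ vanishes there (the tangent space is a sum of translates of tangent spaces of $Y_0$, all annihilated by $\alpha^h_A$); these points are Zariski dense in $A'$, so $\alpha^h_A|_{A'}=0$ as a form and thus $\chi_A$ restricts to $0$ on $A'$. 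But then the identity component of the preimage of $A'$ in $\Alb(X)$ is a connected closed subgroup properly containing $B$ on which $\chi_0$ vanishes (it surjects onto $A'$ with kernel $B$, and $\chi_0$ is trivial on both $B$ and $A'$), contradicting the maximality of $B$ in the definition of $A_{\chi}$. Hence $\psi|_W$ is constant and $W$ lies in a fibre of $\psi$.

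I expect the only genuine subtlety to be the production of the algebraic subgroup $A'$: one cannot simply take the leaf of the foliation $\ker\alpha^h_A$ through a point of $Y$ and pass to its topological closure, because leaves of a translation--invariant foliation on a semiabelian variety need not be closed — the periods of $\alpha^h_A$ may fail to be discrete. Working instead with the Zariski closure of the subgroup generated by $Y-y_0$ circumvents this, since vanishing of a translation--invariant form is inherited by sums and differences and persists under Zariski closure; the remaining verifications (generic smoothness, density of smooth loci, algebraicity of the section) are routine.
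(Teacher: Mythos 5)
Your argument is correct, but it proves the proposition by a genuinely different route than the paper. For algebraicity the paper just invokes Deligne \cite{Del} for the zero locus of a logarithmic $1$-form, while you deduce it from translation-invariance of logarithmic forms on the semiabelian variety $A$; be aware that Proposition \ref{forms on semiabelian} does not literally state that every logarithmic $1$-form on $A$ is invariant (it only exhibits the invariant forms $\sigma_j$ and the resulting decomposition of $H^1$), but the fact you use is standard, and for the particular form $\alpha^h_A$ in play it is immediate from the construction in Lemma \ref{beta semiabelian}, where $\alpha^h_A=p^*\omega_0+\sum_j a_j\sigma_j$ with every summand invariant. For the second assertion the paper takes a resolution $\widehat{S_i}\to S_i$ and uses the universal property of the quasi-Albanese (Theorem \ref{albanese exists}): the composite $\widehat{S_i}\to A$ factors through $\Alb(\widehat{S_i})$, so its image is a translate of a subgroup of $A$ on which $\alpha_A$ vanishes, and minimality of $A_{\chi}$ forces this image to be a point. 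You instead manufacture the subgroup by hand: vanishing of $\alpha^h_A$ along $Y=\overline{\psi(W)}$ via generic smoothness, then the Zariski closure of the group generated by $Y-y_0$, with the invariant form killed on iterated sumsets by a tangent-space computation. The final contradiction with the maximality of $B$ in the definition of $A_{\chi}$ is the same in both proofs. Your route avoids resolution of singularities and quasi-Albanese functoriality at the price of the sumset argument and the invariance fact; the paper's route is shorter precisely because it outsources those points to Iitaka's theorem and the structure of morphisms between semiabelian varieties.
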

\begin{proof}
The graph of a logarithmic $1$-form is an algebraic subvariety of the cotangent bundle (\cite[Proposition 2.24; Th\'eor\`eme 4.1]{Del}), therefore its set of zeroes is algebraic.

Let $S_i \subseteq S$ be an irreducible component. Denote by $\widehat{S_i} \to S_i$ a resolution of singularities. The composition $\widehat{S_i} \to S_i \hookrightarrow X \to A$ factorises through the Albanese morphism of $\widehat{S_i}$, thus we get a diagram
\[
\xymatrix{
\widehat{S_i} \ar[d] \ar[r] & S_i \ar[r] & X \ar[d]\\
\Alb(\widehat{S_i}) \ar[rr] & &A.
}
\]
The pullback of the form $\omega$ to $\widehat{S_i}$ vanishes. Therefore, the pullback of $\omega_A$ to $\Alb(\widehat{S_i})$ vanishes as well. Since $A$ is the Albanese varietiy associated with $\chi$, the image of $\Alb(\widehat{S_i}) \to A$ is a point.
\end{proof}

\begin{cor}\label{isolated sings}
If $\dim X=2$ and $\psi$ is equidimensional, either $\dim \psi(X) <2$, or the form $\omega$ has isolated singularities.
\end{cor}
\begin{proof}
The singularities of $\omega$ are contained in the fibres, hence are $0$-dimensional.
\end{proof}

Assume that we are in the situation of Corollary \ref{isolated sings}. Let $p_i \in S$ be a singularity of $\omega$ and $\sigma_i:=\psi(p_i)$. Choose a small bal $M_{i,A}$ in $A$ around $\sigma_i$ that does not contain other images of points from $S$ and set $M_i:=\psi^{-1}(M_{A, i})$.

 Let $g_i \colon M_i \to \C$ be a holomorphic function such that $g_i(p_i)=0$ and $dg_i=\omega$ in a neighbourhood of $M_i$. 
 
 Let $B_{\epsilon}(0)$ be an $\epsilon$-ball in $\C$ centered at the origin. Let $N_i:=g_i^{-1}(B_{\epsilon}(0))$. Then $\di N_i$ is a manifold with corners that contains a smooth submanifold $T_i:=g_i^{-1}(B_{\epsilon}(0)) \cap \di M_i$ (\cite[Lemma 4]{Simp}). Taking $\epsilon$ sufficiently small we may assume that $\omega|_{\di M_i}$ is nowhere vanishing (\cite[Lemma 3]{Simp}).

\begin{prop}\label{local milnor}
If $\dim \psi(X) \ge 2$, the singularities of $\omega$ are isolated and $\epsilon$ is sufficiently small, the pair $(N_i, E_i)$ is $1$-connected. 
\end{prop}
\begin{proof}
This follows form a standard result of Milnor on the topology of a nearby fibre of an isolated singularity, see \cite{Miln}; \cite[Proposition 11]{Simp}.
\end{proof}

\begin{cor}\label{local milnor+}
Let $V \subset B_{\epsilon}(0)$ be either an open contractible set or a point with nonzero real part. In the assumptions as above, $(N_i, g_i^{-1}(V)$ is $1$-connected. 
\end{cor}
\begin{proof}
If $V$ is a point with nonzero real part, this is just Proposition \ref{local milnor}. 

In general, it is enough to show that $g_i^{-1}(V)$ is connected and then apply Proposition \ref{local milnor} and Corollary \ref{small trans}. We know that for every $v \in V \setminus \{0\}$ the fibre $g_i^{-1}(v)$ is connected. Since $V\setminus \{0\}$ is connected itself, we deduce that $g_i^{-1}(V \setminus \{0\})$ is connected. But it is a dense subset of $g_i^{-1}(V)$. Therefore, $g_i^{-1}(V)$ is connected as well.
\end{proof}

\begin{cor}\label{global milnor}
Let $\widetilde{N}$ be a connected component of the lift of $N_i$ to $Z$ for $\epsilon$ sufficiently small. Let $U_b$ be a $\delta$-neighbourhood of $b \in \mathbb{C}$ and $V \subseteq U_b$ either an open contractible subset, or a point. Then $(g^{-1}(U_b)\cap \widetilde{N}, g^{-1}(V) \cap \widetilde{N})$ is $1$-connected.
\end{cor}
\begin{proof}
It $g^{-1}(U_b) \cap \widetilde{N}$ is empty, there is nothing to prove. Otherwise, taking sufficiently small $\epsilon$ and replacing $g$ with $g-b$ we may assume $\widetilde{N} \subseteq g^{-1}(U_b)$ and $g|_{\widetilde{N}}$ coincides with $g_{i}$ under identification $\pi \colon \widetilde{N} \xrightarrow{\sim} N_i$. This identification reduces everything to the Corollary \ref{local milnor+}.
\end{proof}

\subsection{Construction of the the vector fields}\label{vector fields s}

 One of the crucial steps in the Simpson's proof is the construction of a local trivialisation of the fibration given by the level sets of $g$  on $Z$ outside a union of neighbourhoods of critical points of $g$ (see the proof of Theorem 14 in \cite{Simp}). This is done by lifting a vector field from $X$. In the assumptions of \cite{Simp} $X$ is compact, so such a vector field is automatically complete. In our case the completeness is not automatical, so we need to rescale the vector field with respect to an appropriate metric. Recall that if $(M, g)$ is a complete Riemannian manifold and $\mathbf{v}$ is a vector field  on $M$ such that $||\mathbf{v}||_g$ is globally bounded, then $\mathbf{v}$ is complete.

\begin{prop}\label{local poincare}
Let  $U=(\Delta^{\times})^p \times \Delta^q$ be the product of punctured disks and a polydisk with coordinates $(z_1, \ldots, z_{n}), \ z_1 \cdot \ldots \cdot z_p \neq 0$ (here $n=p+q$). Let $\omega$ be a holomorphic logarithmic $1$-form without zeroes on $U$ with the pole divisor contained in $\{z_1\cdot \ldots \cdot z_p=0\}$. Let $\pi \colon \widetilde{U} \to U$ be a connected cover such that $\widetilde{\omega}=\pi^*\omega$ is exact. Then there exist vector fields $\mathbf{u}$ and $\mathbf{v}$ on $U$ such that:
\begin{itemize}
\item[(i)] $\operatorname{Re}\omega(\mathbf{u}) > 0$;
\item[(ii)] $\operatorname{Im}\omega (\mathbf{v})> 0$;
\item[(iii)] if $\widetilde{\mathbf{u}}$ and $\widetilde{\mathbf{v}}$ are the lifts of $\mathbf{u}$ and $\mathbf{v}$ to $\widetilde{U}$, these vector fields are complete, that is, integrate to globally defined flows on $\widetilde{U}$. 
\end{itemize}
\end{prop}
\begin{proof}
Let $\widetilde{\omega}=dA+\i dB$ for some harmonic functions $A, B \colon \widetilde{U} \to \R$. Recall that $\widetilde{U}$ is a quotient of a polydisc, in particular it is endowed with the hyperbolic K\"ahler (Poincar\'e) metric $g_P$ which is preserved under holomorphic transformations.

Let $\nabla A$ denote the gradient vector field of $A$ with respect to $g_P$. This vector field has no zeros since $dA=\operatorname{Re}\widetilde{\omega}$ is everywhere nonzero. Put 
\[
\widetilde{\mathbf{u}}:= \frac{\nabla A}{||\nabla A||_{g_P}}.
\]
This is a complete vector field on $\widetilde{U}$ of constant norm with respect to the Poincar\'e metric and
\[
\widetilde{\omega}(\widetilde{\mathbf{u}})=(dA+\i dB)(\widetilde{\mathbf{u}}) = \frac{1}{||\nabla A||}+\i \di_{\widetilde{\mathbf{u}}}B.
\]
We see that $\operatorname{Re} \widetilde{\omega}(\widetilde{\mathbf{u}})>0$. Similarly,

\[
\widetilde{\mathbf{v}}:= \frac{\nabla B}{||\nabla B||_{g_P}}
\]

is a complete vector field on $\widetilde{U}$ with $\operatorname{Im} \widetilde{\omega}(\mathbf{v}) >0$.  Both the metric $g_P$ and the forms $dA=\operatorname{Re}(\widetilde{\omega}), \ dB=\operatorname{Im}(\widetilde{\omega})$ are $\pi_1(U)$-invariant, therefore $\widetilde{\mathbf{u}}$ and $\widetilde{\mathbf{v}}$ descend to well-defined vector fields $\mathbf{u}$ and $\mathbf{v}$ on $U$.
\end{proof}

Recall that under our assumptions the form $\omega$ has isolated singularities $\{p_1, \ldots, p_r\}$. In particular it does not vanish in a neighbourhood of the boundary $\overline{X} \setminus X$ for any projective compactification $X \hookrightarrow \overline{X}$.

We use this fact and Proposition \ref{local poincare} to construct  global vector fields on $X \setminus \bigcup_i N^{\circ}_i$ transverse to the foliation given by $\omega$ using a partition of unity.

\begin{prop}\label{vector fields}
Let $X^{\circ}:= X \setminus \bigcup_i N^{\circ}_i$. There exist smooth vector fields $\mathbf{u}$ and $\mathbf{v}$ on $X^{\circ}$, such that:
\begin{itemize}
\item[(i)] $\operatorname{Re} \omega(\mathbf{u}) > 0$;
\item[(ii)] $\operatorname{Im} \alpha(\mathbf{v}) > 0$;
\item[(iii)] $\mathbf{u}$ and $\mathbf{v}$ are tangent to $T_i$ for each $i$;
\item[(iv)] the lifts $\widetilde{\mathbf{u}}$ and $\widetilde{\mathbf{v}}$ of $\mathbf{u}$ and $\mathbf{v}$ on $\pi^{-1}(X^{\circ}) \subseteq Z$ are complete.
\end{itemize}
\end{prop}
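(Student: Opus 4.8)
The plan is to construct $\mathbf{u}$ and $\mathbf{v}$ by gluing locally defined vector fields with a partition of unity, exactly as in the proof of Theorem~14 of \cite{Simp}, the only new ingredient being the control of the behaviour near the boundary divisor $D$ of a smooth compactification $X\hookrightarrow\overline{X}$, for which Proposition~\ref{local poincare} was tailored. First I would record two preliminary observations. By the preceding Proposition each irreducible component $S_i$ of $S=\{\alpha=0\}$ is collapsed by $\psi$ to a point $\sigma_i\in A$, and since $g_i=g_{A,i}\circ\psi$ vanishes on $\psi^{-1}(\sigma_i)\subseteq M_i^{\circ}$ one gets $S\subseteq\bigcup_i N_i^{\circ}$; hence the holomorphic logarithmic $1$-form underlying $\alpha$ is nowhere vanishing on $X^{\circ}=X\setminus\bigcup_i N_i^{\circ}$, so both $\operatorname{Re}\alpha$ and $\operatorname{Im}\alpha$ are non-vanishing there. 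Secondly, since $\psi$ is proper (Proposition~\ref{non-ext proper}) and $\sigma_i\in A$, the balls $M_{A,i}$ are relatively compact in $A$ for $\epsilon_1$ small, so $M_i$ — and in particular $N_i$ and $T_i$ — is relatively compact in $X$, hence contained in a fixed compact part of $X$ and disjoint from a neighbourhood of $D$.

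Next I would carry out the construction away from $D$ exactly as in \cite{Simp}. Fix a relatively compact open $X_1\subseteq X$ large enough that $X^{\circ}\setminus X_1$ is covered by finitely many charts $U_1,\dots,U_m$ of $X$ in which $U_a\cong(\Delta^{\times})^{p_a}\times\Delta^{q_a}$, the divisor $D$ is the union of the coordinate hyperplanes, and $\alpha$ (resp.\ its holomorphic partner) is in the standard logarithmic form. On $X_1$ one writes $\operatorname{Re}\alpha=dA$ in each small ball, takes the gradient $\nabla A$ for a fixed auxiliary metric (so $\operatorname{Re}\alpha(\nabla A)=\|\nabla A\|^2>0$), and patches these with a partition of unity — a convex combination of vectors each pairing positively with $\operatorname{Re}\alpha$ again pairs positively with it. Near each $T_i$ one uses Proposition~\ref{local topology}(i),(iii) that $\partial M_i$ is smooth and $dg_i|_{\partial M_i}$ is nonzero on $T_i$, so that $T_i$ is a smooth hypersurface of $\partial M_i$, and modifies the field to become tangent to $T_i$ while preserving positivity; this transversality argument is local near the compact $T_i$ and is performed verbatim in \cite{Simp}. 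The same recipe with $\operatorname{Im}\alpha$ produces the corresponding field on $X_1$.

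It then remains to extend over the charts near $D$ and to verify completeness. On each $U_a$, Proposition~\ref{local poincare} provides vector fields $\mathbf{u}_a,\mathbf{v}_a$ with $\operatorname{Re}\alpha(\mathbf{u}_a)>0$, $\operatorname{Im}\alpha(\mathbf{v}_a)>0$ which, by its proof, are of unit length for the Poincar\'e metric on $U_a\cap X$. Patching the field on $X_1$ with the $\mathbf{u}_a$ by a partition of unity subordinate to $\{X_1,U_1,\dots,U_m\}$ yields a global $\mathbf{u}$ on $X^{\circ}$; positivity of $\operatorname{Re}\alpha(\mathbf{u})$ is preserved, and tangency to $T_i$ holds because near $T_i$ only the already-tangent $X_1$-field contributes. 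Similarly for $\mathbf{v}$. For the completeness assertion~(iv), equip $X^{\circ}$ with a Riemannian metric which near $D$ is of Poincar\'e type (quasi-isometric to the Poincar\'e metric on each $U_a$) and is arbitrary on the relatively compact part; such a metric exists and is complete, its only end being along $D$. With respect to it $\mathbf{u}$ and $\mathbf{v}$ have bounded norm, so on any finite time interval their integral curves have finite length; a curve of bounded length has compact closure, so no integral curve escapes in finite time except possibly into the boundary $\bigcup_i T_i$, which tangency~(iii) excludes. Pulling the metric back to $\pi^{-1}(X^{\circ})\subseteq Z$ gives a complete metric (a covering of a complete manifold is complete) for which $\widetilde{\mathbf{u}},\widetilde{\mathbf{v}}$ are still bounded, hence complete.

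The main obstacle is precisely this completeness of the lifted vector fields: in the compact setting of \cite{Simp} it is automatic, whereas here an integral curve could a priori run off into $D$ in finite time. Overcoming this is exactly the role of Proposition~\ref{local poincare}, which forces the fields near $D$ to be unit vectors for a complete Poincar\'e-type metric; the remaining work — patching the two local models across their interface while simultaneously keeping positivity of the pairings, tangency to the $T_i$, and completeness — is then routine, and the part near the $T_i$ is literally that of \cite{Simp}.
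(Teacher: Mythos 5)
Your proposal is correct and follows essentially the same route as the paper: an interior field built as in Simpson's Theorem 14 (tangent to the $T_i$, which lie away from $D$), local fields near the boundary divisor supplied by Proposition~\ref{local poincare}, a partition-of-unity gluing that preserves positivity of the pairings, and completeness of the lifts deduced from boundedness of the norm with respect to a complete Poincar\'e-type metric pulled back to the cover. The only cosmetic differences are that the paper invokes Demailly's complete K\"ahler metric on $X$ restricting to the Poincar\'e metric on the boundary charts (together with Proposition~\ref{local injective}) where you build an equivalent metric by hand, and that you spell out the interior construction that the paper simply cites from \cite{Simp}.
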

\begin{proof}
We will explain the construction of $\mathbf{u}$ in detail. The construction of $\mathbf{v}$ is analogous.

Choose a smooth projective compactification $X \hookrightarrow \overline{X}$ with a simple normal crossing boundary divisor $D=\overline{X} \setminus X$. Then $X^{\circ}$ admits an open cover
\[
X^{\circ}=U_0 \cup (\bigcup_{j=1}^d U_j),
\]
where each $U_j$ is of the form $\overline{U}_j \cap X$ for some  neighbourhood $\overline{U}_j \subset \overline{X}$ of a point on $D$ and $U_0$ is the complement of the union of closures of $U_j$ in $X^{\circ}$.

Moreover we may assume that $\omega$ has no zeroes in any of $U_j$ and every $U_j$ is biholomorphic to $(\Delta^{\times})^{p_j} \times \Delta^{q_j}$.

Let $G_P$ be a complete K\"ahler metric on $X$ that restricts to the Poincar\'e metric on every $U_j$ (for the existence of such, see \cite{Dem}). It lifts to a complete metric $\pi^*G_P$ on $Z$.

For every $j \in \{1, \ldots, d\}$ we may construct a pair of vector fields $\mathbf{u}_j, \mathbf{v}_j$ on $U_j$ as in Proposition \ref{local poincare}.

Since $U_0$ is relatively compact in $X$, we may also find a vector field $\mathbf{u}_0$ on $U_0$ such that $\omega(\mathbf{u}_0) \equiv 1$ and $\mathbf{u}_0$ is tangent to $T_i$ for every $i$ (see the proof of \cite[Theorem 14]{Simp}). Notice that $||\mathbf{u}_0||_{G_P}$ is bounded since $U_0 \subset X$ is relatively compact.

Take a partition of unity  adapted to the covering $X^{\circ}=U_0 \cup U_1 \cup \ldots \cup U_d$. By this we understand a collection of smooth functions $\phi_0, \phi_1, \ldots , \phi_d \colon X^{\circ} \to [0;1]$ with the following properties:
\begin{itemize}
\item for each $j=0,1, \ldots, d$ there is a compact $K_j \subset \subset U_j$ such that $\phi_j|_{K_j} \equiv 1$;
\item $\phi_j$ vanishes outside the closure of $U_j$;
\item $\phi_0 + \sum_{j=1}^d \phi_j\equiv 1$.
\end{itemize}

Then
\[
\mathbf{u}= \sum_{j=0}^d \phi_j \mathbf{u}_j
\]
is a well-defined vector field on $X^{\circ}$ that satisfies
\[
\operatorname{Re} \omega(\mathbf{u})= \sum_{j=0}^{d} \phi_j \operatorname{Re}\omega(\mathbf{u}_j) >0
\]
and is tangent to $T_i$ for every $i$. Moreover, its norm is globally bounded in the metric $G_P$ since at every point $||\mathbf{u}(x)||_{G_P} < \sum_j ||\mathbf{u}_j(x)||_{G_P}$. Therefore, $\mathbf{u}$ is complete and the same holds for its lift $\widetilde{\mathbf{u}}$ on $Z$.
\end{proof}

\subsection{Local trivialisations outside neighbourhoods of singularities}\label{ehresmann s}
We work in the same assumptions and notations as before.

Recall that $S=\{\omega=0\}$ is a finite collection of points $\{p_1, \ldots, p_r\}$. For each $p_i$ we denote by $\{\widetilde{p_{i,j}}\}$ its preimages in $Z$, indexed by a set $J_i$. We denote the union if the indexing sets by $J=\bigsqcup J_i$ (each $J_i$ is a torsor over $\pi_1(A)$, although this plays no role in this context). Every ball $M_i$ lifts  to a ball $\widetilde{M_{i,j}}=M_j$ around $p_j$. Every function $g_i$ lifts to $\widetilde{g_{i,j}}=\widetilde{g_j} \colon \widetilde{M_j} \to \C$ that coincides with $g$ up to a translation: $\widetilde{g_j}=g+a_j$. Similarly, we have lifts $N_j, T_j$ and $E_j$.

Let $b \in \C$ be a non-critical value of $g \colon Z \to \C$. We fix to sufficiently small numbers $0<\delta<\delta_1$ and the following notations:
\begin{itemize}
\item $U_{b}$ denotes a $\delta$-neighbourhood of $b$ in $\C$;
\item $Q(b):=g^{-1}(U_{b})$ and $E(b)=g^{-1}(b)$;
\item $J(b)$ is the set of indices $j \in J$ such that $|a_j-b|<\delta_1$. Recall that $a_j=g_j(p_j)-g(p_j)$ are the critical values of $g$;
\item $Q^L(b):= Q(b) \setminus \left ( \bigcup_{j \in J(b)} N^{\circ}_j \right)$;
\item $E^L(b):=E(b) \cap Q^L(b)$.
\end{itemize}

Observe that $E(b)\cap N_j=E_j$ and $E(b)=E^L(b) \cup (\bigcup_{j \in J(b)} E_j)$.

The following lemma gives an Ehresmann trivialisation of $g \colon Q(b) \to U_b$ outside the union of $N_j$.

\begin{lemma}\label{ehresmann}
There exists a smooth trivialisation
\[
\Phi \colon Q^L(b) \xrightarrow{\sim} E^L(b) \times U_b
\]
such that $g|_{Q^L(b)}$ equals the composition of $\Phi$ and the projection on the second factor. Moreover, this trivialisation preserves the boundary of $N_j$, i.e.
\[
\Phi(\di N_j \cap Q^L(b))=(\di N_j \cap E^L(b)) \times U_b
\]

\end{lemma}
\begin{proof}
First, we contract $Q_b$ to a preimage of a real line $l \subset \C$ containing $b$.

Let $\mathbf{u}$ and $\mathbf{v}$ be as in Proposition \ref{vector fields}. Notice that, in the notations of Proposition \ref{vector fields}, $Q^L(b)=Q(b) \cap \pi^{-1}(X^{\circ})$.

We choose a real line $l=\{b+t\lambda \ | t \in \R\}$ passing through $b$ and containing no critical values of $g$. We may also do it in such a way that it is not parallel to the line of purely real numbers $\R \subset \C$, i.e. $\operatorname{Im}\lambda \neq 0$.

Its preimage $g^{-1}(l)$ is a smooth real hypersurface in $Z$ (recall that it contains no singular values). Let $H(b):=g^{-1}(l) \cap Q_b$ and $H^L(b)=H(b) \cap Q^L(b)$.  The vector field $\widetilde{\mathbf{u}}=d\pi^{-1}(\mathbf{u})$ is complete and everywhere transverse to the fibres of $g$. Moreover, the function $\operatorname{Re}g$ is monotone along its integral flows, because its derivative equals $\operatorname{Re}\pi^*\omega(\widetilde{\mathbf{u}})$ which is everywhere positive.

It follows that for every $z \in Q^L(b)$ its trajectory under the integral flow of $\mathbf{u}$ hits $H^L(b)$ ins precisely one point. This determines a retraction
\[
Q^L(b) \to H^L(b) \times [0;1].
\]
that lifts a retraction $U_b \to U_b \cap l$

Since $\widetilde{\mathbf{u}}$ is tangent to $T_j$, it preserves the boundary, i.e. the manifolds $\di N_j \cap Q^L(b)$ are sent to $(\di N_j \cap H^L(b)) \times[0;1]$.

Observe that the kernel of $\operatorname{Im}\omega$ is everywhere transverse to $TH^L(b)$, therefore one can choose a smooth splitting
\[
TZ|_{H^L(b)} = TH^L(b) \oplus \xi,
\]
where $\xi \subset TZ|_{H^L(b)}$ is a real line bundle contained inside $\ker \operatorname{Im}\omega$. Projecting $\widetilde{\mathbf{v}}$ along this splitting to $TH^L(b)$ we get a nowhere vanishing complete vector field $\widetilde{\mathbf{v}'}$ on $H^L(b)$ that still satisfies $\operatorname{Im}(\omega|_{TH^L(b)})(\widetilde{\mathbf{v}'})>0$. Again, the function $\operatorname{Im}g$ is monotone along its trajectories, so every trajectory intersects $F^L(b)=g^{-1}(b) \cap H^L(b)$ precisely in one point and the boundary $\di N_i \cap H^L(b)$ is still preserved in the sense as above.
\end{proof}

\begin{cor}[cf. \cite{Simp}, Theorem 14]\label{over V}
Let $V \subset U_b$ be a contractible subset. Suppose that there exists a smooth retraction of $U_b$ on $V$. Let 
\[
P(b, V):=g^{-1}(V) \cup (\bigcup_{j \in J(b)} g^{-1}(U_b) \cap N_j)
\]
Then the pair $(Q(b), P(b, V))$ is $1$-connected.
\end{cor}
\begin{proof}
Observe that $Q(b)$ decomposes into two smooth pieces,
\[
Q(b)= Q^L(b) \cup \bigcup_{j \in J(b)} (N_j \cap Q(b))
\]
with the boundary between them being $\bigcup_{j \in J(b)} \di N_i \cap Q(b)$. 

Set $P^L(b, V):= P(b, V) \cap Q^L(b)$. In other words, 
\[
P^L(b, V)=g^{-1}(V) \setminus \left ( \bigcup_{j \in J(b)} g^{-1}(V) \cap N_j \right).
\]

By Excision II (Proposition \ref{homotopy}, \textit{(iv)}) $(Q(b), P(b, V))$ is $k$-connected if and only if $(Q^L(b), P^L(b, V))$ is $k$-connected. At the same time, by Lemma \ref{ehresmann} there exists a trivialisation $Q^L(b) \xrightarrow{\sim} E^L(b) \times U_b$. Under this diffeomorphism, $P^L(b, V)$ is mapped  to $E^L(b) \times V \cup \di E^L(b) \times U_b$. We obtain a homotopy equivalence of pairs
\[
(Q^L(b), P^L(b, V)) \sim (E^L(b) \times U_b, E^L(b) \times V \cup \di E^L(b) \times U_b).
\]
The latter is $k$-connected for every $k$ because we assumed that $U_b$ retracts on $V$ and this retraction is a homotopy equivalence.
\end{proof}

\subsection{End of the proof}\label{end}

We finish the proof of Theorem \ref{simpson lefschetz A}. 

\begin{prop}\label{local step}
Let $b \in \mathbb{K}$ be a non-critical value, $U_b$ its $\delta$-neighbourhood and $V \subset U_b$ either a non-critical value of $g$ or a contractible open set. Then $(g^{-1}(U_b), g^{-1}(V))$ is $1$-connected.
\end{prop}
\begin{proof}
By Corollary \ref{global milnor} the pairs $(g^{-1}(U_b) \cap N_j, g^{-1}(V) \cap N_j)$ are $1$-connected for every $j$. Together with the Excision and Deformaion properties (Proposition \ref{homotopy}, \textit{(ii),(iii)}) this implies that $(P(b, V), g^{-1}(V))$ is $1$-connected. The pair $(g^{-1}(U_b), P(b, V))$ is $1$-connected by Corollary \ref{over V}. Applying the Transitivity(Proposition \ref{homotopy}, \textit{(i)}) we deduce the $1$-connectedness of the pair $(Q(b), g^{-1}(V))$.
\end{proof}

\begin{proof}[Proof of Theorem \ref{simpson lefschetz A}]
As we explained above (Lemma \ref{hyperplane}), we may assume that $X$ is a surface and $\psi$ is equidimensional. Suppose that $\dim \psi(X) \ge 2$, i.e. that $\psi$ is finite on its image. Thus, the singularities are isolated and the pairs $(N_j, F_j)$  are $1$-connected for every $j$ (Proposition \ref{local milnor}). Whenever $b \in \C$ is a non-singular value of $g$, and $V$ is either a point or a contractible open subset of its $\delta$-neighbourhood $U_b=B(b, \delta) \subset \mathbb{C}$, the pair $(g^{-1}(U_b), g^{-1}(V))$ is $1$-connected (Proposition \ref{local step}).

Let $b_0, b_1, \ldots \in \C \setminus \mathcal{P}$ be a sequence of non-singular values. Let $W_k:=\bigcup_{i=0}^{\infty} U_{b_i}$ and $V_{k}:=U_{b_{k}} \cap W_{k-1}$.  Set also $W_0=V_0 \subset U_{b_0}$ to be either a point or a contractible open subset. The sequence $(b_j)$ can be chosen in such a way that:
\begin{itemize}
\item $\bigcup_{i=0}^{\infty} U_{b_i} = \C$;
\item if $W_j:=\bigcup_{i=0}^{j} U_{b_i}$, then $V_{j+1}:=U_{b_{j+1}} \cap W_j$ is a non-empty contractible subset of $U_{b_{j+1}}$.
\end{itemize}

We claim that $(g^{-1}(W_k), g^{-1}(W_0))$ if $1$-connected for every $k$. This is proven by induction on $k$. For $k=0$ this is a tautology, for $k=1$ this is Proposition \ref{local step}. The step of induction is done as follows. First, the pair $(g^{-1}(U_{b_k}), g^{-1}(V_k))$ is $1$-connected by Proposition \ref{local step}. Let $G_k:=W_k \setminus U_{b_k}$. Then $U_{b_k}=W_k \setminus G_k$ and $V_k= W_{k-1} \setminus  G_k$. In other words, we just proved the $1$-connectedness of $(g^{-1}(W_k \setminus G_k), g^{-1}(W_{k-1}\setminus G_k))$. By the Excision property  (Proposition \ref{homotopy}, \textit{(iii)}), this implies the $1$-connectedness of $(g^{-1}(W_k), g^{-1}(W_{k-1}))$. Since $W_0 \subset W_{k-1} \subset W_k$, we deduce that $(g^{-1}(W_k), g^{-1}(W_0)$ is $1$-connected. 

By the Exhaustion (Proposition \ref{homotopy}, \textit{(v)}) we deduce that 
\[
(g^{-1}(\bigcup_k W_k), g^{-1}(W_0)) = (Z, g^{-1}(W_0)
\]
is $1$-connected. Taking $W_0=\{v\}$ to be a point, we obtain the statement of the theorem.
\end{proof}

\bibliography{references.bib}{}
\bibliographystyle{alpha}
\end{document}